\theoremstyle{plain} \newtheorem{lem}{Lemma}[section]
\theoremstyle{plain} \newtheorem{prop}[lem]{Proposition}
\theoremstyle{plain} 
\theoremstyle{plain} 
\theoremstyle{plain} 
\theoremstyle{plain} \newtheorem{thm}[lem]{Theorem}
\theoremstyle{plain} \newtheorem{cor}[lem]{Corollary}
\theoremstyle{plain} 
\theoremstyle{definition} \newtheorem{defn}[lem]{Definition}
\theoremstyle{definition}
\theoremstyle{definition} \newtheorem{rem}[lem]{Remark}
\theoremstyle{definition}\newtheorem{ex}[lem]{Example}
\theoremstyle{definition}\newtheorem{prob}[lem]{Problem}
\newcommand{\R}{\mathbb{R}}
\newcommand{\C}{\mathbb{C}}
\newcommand{\union}{\cup}
\def\independent{\!\perp\!\!\!\perp\!}
\def\given{\,|\,}
\newcommand{\RLCT}{{\rm RLCT}}
\newcommand{\corr}{\operatorname{corr}}
\newlength\savedwidth
\begin{document}
\thispagestyle{plain}
	\begin{center}
	{\bf{\LARGE{Hypersurfaces and their singularities \\
in partial correlation testing}}}

	\vspace*{.1in}
	\begin{tabular}{c}
Shaowei Lin$^{1}$, Caroline Uhler$^{2}$, Bernd Sturmfels$^{1}$ and Peter B\"uhlmann$^{3}$ \\ \quad \\
$^{1}$Department of Mathematics, UC Berkeley\\
$^{2}$IST Austria\\
$^{3}$Seminar for Statistics, ETH Z\"urich\\
	\end{tabular}
	\vspace*{.1in}
%	\today
	\end{center}

\begin{abstract}
\noindent
An asymptotic theory is developed for computing volumes of
regions in the parameter space of a
directed Gaussian graphical model  that are
obtained by bounding partial correlations.
We study these volumes using the  method
 of real log canonical thresholds from algebraic geometry.
Our analysis involves the computation
of  the  singular loci of
correlation hypersurfaces. 
Statistical applications include the strong-faithfulness 
assumption for the PC-algorithm, and the quantification of 
confounder bias in causal inference.
A detailed analysis is presented for
trees, bow-ties, tripartite graphs, and complete graphs.
\let\thefootnote\relax\footnotetext{\emph{Key words and phrases:} causal inference, PC-algorithm, (strong) faithfulness, real log canonical threshold, resolution of singularities, partial correlation, real radical ideal,
asymptotics of integrals, almost-principal minor, directed acyclic graph, Gaussian graphical model, algebraic statistics, singular learning theory.}
\end{abstract}

\section{Introduction}
\label{sec:intro}
Extensive theory has been established in recent years for causal
inference based on
 directed acyclic graph (DAG) models. A popular way for estimating a DAG model from observational data
  employs  partial correlation testing to infer the conditional independence relations in the model. In this paper, we
  apply algebraic geometry and singularity theory to analyze partial
  correlations in the Gaussian case.
   The objects of our study are algebraic hypersurfaces
 in the parameter space of a given graph that
 encode conditional independence~statements.

We begin with definitions for
graphical models in statistics.
A DAG is a pair $G = (V,E)$ consisting of a set $V$ of nodes
and a set $E$ of directed edges with no directed cycle.
We usually take $V=\{1,2,\ldots ,p\}$ and we associate random variables 
$X_1,X_2,\ldots,X_p$ with the nodes.  Directed edges are denoted by $(i,j)$ or
$i\rightarrow j$. The \emph{skeleton} of a DAG $G$ is the
underlying undirected graph obtained by removing the arrowheads. A node $i$ is  an \emph{ancestor} of $j$ if there is a directed path $i\rightarrow \cdots \rightarrow j$, and a configuration $i\to k$, $j\to k$ is  a \emph{collider} at $k$. Finally, we assume that the vertices are topologically ordered, that is $(i,j) \in E$ implies~$i < j$.
 
%  A triple of nodes $(i,j,k)$ defines a \emph{v-structure} if $i\to k$, $j\to k$ and $i,j$ are not adjacent
%  in the skeleton. In this case, the node $k$ is called a \emph{collider} on the path $i\rightarrow k \leftarrow j$. Finally, we assume  that $(i,j) \in E$ implies $i < j$.

Every DAG $G$ specifies a {\em Gaussian graphical model} as follows.
The adjacency matrix $A_G$ is
the strictly upper triangular matrix whose entry in row $i$ and column $j$
is a parameter $a_{ij}$ if $(i,j) \in E$ and it is zero if $(i,j)
\not\in E$. The Gaussian graphical model is defined by the structural equation model $X=A_G^T X+\epsilon$, where $X=(X_1,\ldots , X_p)^T$. We assume that $\epsilon\sim\mathcal{N}(0,I)$, where $I$ is the $p \times p$-identity matrix. Then the \emph{concentration matrix} of this model equals
$$ K \, = \, (A_G - I)(A_G-I)^T.$$ 
Since ${\rm det}(K)=1$, the covariance matrix $\Sigma = K^{-1}$ 
is equal to the adjoint of $K$. 
The entries of the symmetric matrices $K$ and $\Sigma$ are polynomials in the parameters
$a_{ij}$. Our parameter space for this DAG model
will always be a  full-dimensional subset $\Omega$ of $\R^{|E|}$.

For any subset $S \subset V$ and distinct elements $i,j$ in $V \backslash S$,
we  represent the conditional independence statement 
$i \independent j \given S$ by an {\em almost-principal minor}
of either $K$ or $\Sigma$. By this we mean a square submatrix whose
sets of row and column indices differ in exactly one element.
To be precise, $\,i \independent j \given S\,$ holds for the multivariate normal distribution
with concentration matrix $K$  if and only if the submatrix 
$K_{iR,  jR} $ is singular, where $R = V \backslash (S \cup \{i,j\})$ and $iR=\{i\}\cup R$.
The determinant~$\,{\rm det}(K_{iR, jR})\,$ is a polynomial in $(a_{ij})_{(i,j)\in E}$. 
We are interested in the hypersurface in $\R^{|E|}$ defined by the vanishing of this polynomial.
Indeed, the {\em partial correlation} $\corr(i,j|S)$ is up to sign equal to the algebraic expression
\begin{equation}
\label{eq:parcor}
%\corr(i,j|S) \,\, = \,\,
\frac{{\rm det}(K_{iR, jR})}{\sqrt{
{\rm det}(K_{iR,iR}) \cdot {\rm det}(K_{jR,jR})}}. 
\end{equation}
Since  the principal minors under the square root sign are strictly positive,
$\corr(i,j|S) = 0$ if and only if $\,{\rm det}(K_{iR, jR})
= 0$. If this holds for all $a \in \R^{|E|}$  then 
 $\,i \independent j \given S\,$ for $G$ and we say
that $i$ is \emph{d-separated} from $j$ \emph{given} $S$. This translates into
 a combinatorial condition on the graph $G$ as follows \cite[\S 2.3.4]{SGS}.
   An undirected path $P$ from $i$ to $j$
             \emph{d-connects} $i$ and $j$
             \emph{given} $S$~if
 \begin{enumerate}
 \item[(a)] every non-collider on $P$ is not in $S$,
\item[(b)] every collider on $P$ is in $S$ or an ancestor of a node in $S$.
\end{enumerate}
If $G$ has no path that d-connects $i$ and $j$ given $S$, then $i$ and $j$ are
{\em d-separated given} $S$,
and ${\rm det}(K_{iR,jR}) \equiv 0$ as a function of $a$.
  The \emph{weight} of a path $P$ is the product of all edge weights 
$a_{rs}$ along this path. It was shown in \cite[Equation (11)]{URBY} that the numerator
${\rm det}(K_{iR, jR})$ in (\ref{eq:parcor}) is a linear combination,
as in (\ref{eq:fsosf}),
 of the weights of all paths that d-connect $i$ to $j$  given~$S$.

Our primary  objects of study are the following subsets
of the parameter space:
\begin{equation}
\label{eq:tube_ijS}
{\rm Tube}_{i,j|S}(\lambda) \,\,= \,\, \{ \,\omega \in \Omega : |\corr(i,j|S)| \leq \lambda \,\}.
\end{equation}
Here $\corr(i,j|S)$ is a function of the parameter $\omega$ (denoted $(a_{ij})_{(i,j)\in E)}$ above) in the space $\Omega \subset \R^{|E|}$, $\lambda$ is a parameter in $[0,1]$, and
 $(i, j,S)$ is a triple where $i$ and $j$ are d-connected given $S$. These ``tubes'' can be seen as hypersurfaces which have been fattened up by a factor which depends on $\lambda$ and the position on the hypersurface
(see Figure \ref{fig:tubes}).
The volume of  
$\,{\rm Tube}_{i,j|S}(\lambda)\,$ with respect to a given measure, or {\em prior},
 $\,\varphi(\omega)\,d\omega\,$ on $\,\Omega \subset \R^{|E|}\,$ is represented by the integral
\begin{equation}
\label{eq:volume1}
{\rm V}_{i,j|S}(\lambda) \,\,= \,\,\int_{{\rm Tube}_{i,j|S}(\lambda)} \varphi(\omega)\,d\omega.
\end{equation}
In this paper we study the asymptotics of this integral
when the parameter $\lambda$ is close to~$0$.

Two applications in statistics are our motivation.
The first  concerns the strong-faithfulness assumption for algorithms that learn
 Markov equivalence classes of DAG models by inferring conditional independence relations.
The PC-algorithm \cite{SGS} is a prominent instance.
Our set-up is exactly as in  \cite{URBY}.
The Gaussian distribution with concentration
matrix $K$ is $\lambda$-\emph{strong-faithful} to a DAG $G$ if, for  any 
 $S \subset V$ and $i,j \notin S$, we have
$|\corr(i,j|S)\,| \leq \lambda$ if and only if
$i$ is d-separated from $j$ given $S$.  We write $V_G(\lambda)$ for the volume
of the region in $\Omega$ representing distributions that are not
$\lambda$-strong-faithful. In other words, $V_G(\lambda)$ is the volume of the union of all tubes 
in $\Omega$ that correspond to non d-separated triples $(i,j,S)$.

 Zhang and Spirtes~\cite{ZhangSpirtes03} proved uniform consistency of the PC-algorithm under the
strong-faithfulness assumption with $\lambda \asymp 1/\sqrt{n}$, 
provided the number of nodes $p$ is fixed and sample size
$n \to \infty$. In a high-dimensional, sparse setting, Kalisch and
B\"uhlmann~\cite{kabu07} require strong-faithfulness with
$\lambda \asymp \sqrt{\deg(G)\log(p)/n}$, where $\deg(G)$ denotes the maximal degree (i.e., sum of indegree and outdegree) of nodes in~$G$. 

In order to understand the properties of the PC algorithm for a large sample size $n$, it is essential to determine the asymptotic behavior of the unfaithfulness volume
$V_G(\lambda)$   when $\lambda$ tends~to~$0$. Given a prior $\varphi$ over the
parameter space, $V_G(\lambda)$ is the prior probability that the true
parameter values violate $\lambda$-strong faithfulness. Thus $1-V_G(\lambda)$ for $\lambda \asymp 1/\sqrt{n}$ describes the prior probability that the PC algorithm is able to recover the true graph. 
 We shall see in Example \ref{ex:specialcases}
   that $V_G(\lambda)$ depends  on the choice of the parameter space
   $\Omega$ and the prior $\varphi$.

%Thus, by varying $\lambda$ appropriately~with the sample size $n$, we can keep
%$V_G(\lambda)$ small so that the PC algorithm is able to recover the
%true graph with high probability as $n$ increases. 

%For the scenarios described above it is essential to determine the
% asymptotic behavior of the unfaithfulness volume
%$V_G(\lambda)$   when $\lambda$ tends~to~$0$.
 We shall address the issue of computing $V_G(\lambda)$ as $\lambda\to 0$.
 This will be done    using  the concept of
   {\em real log canonical thresholds}   \cite{AGV,LinThesis,Watanabe09}.
Our Section~\ref{sec:real_log_threshold} establishes the
existence of  positive constants $\ell,m,C$ (which depend on $G$ and $\varphi$)
  such that, asymptotically for $\lambda \rightarrow 0$,
\begin{equation}
\label{eq:volume2}
\begin{matrix}
{\rm V}_G(\lambda) && \approx &&
 C \cdot \lambda^{\ell} \cdot (- \ln \lambda)^{m-1} ,\\
{\rm V}_{i,j|S}(\lambda) && \approx &&
 C'  \cdot \lambda^{\ell'} \cdot (- \ln \lambda)^{m'-1}. 
 \end{matrix}
 \end{equation}
(See (\ref{eq:curly_equal}) for an exact definition of  $\approx$.) This refines the results in \cite{URBY} on
the growth of $V_G(\lambda)$ via the geometry of
 the {\em correlation hypersurfaces} 
  $\,\{ {\rm det}(K_{iR, jR}) = 0\}$.
While \cite{URBY} focused on developing bounds on $V_G(\lambda)$ for the low-dimensional as well as the high-dimensional case and showed the importance of the number and degrees of these hypersurfaces,
we here analyze the exact asymptotic behavior of $V_G(\lambda)$ for $\lambda\to 0$ and $G$ fixed and demonstrate the importance of the singularities of these hypersurfaces.
 Singularities get fattened up much more than smooth parts of the hypersurface,
 and this increases the volumes (\ref{eq:volume2})
  substantially.
 
Our second application concerns {\em stratification bias in causal inference} (see e.g.~\cite{Greenland, GP}).
Here, the volume $V_{i,j|S}(\lambda)$ being large
is not a problematic feature, but is in fact desired. 
 Suppose we want to study the effect of an exposure $E$ on a disease outcome $D$. 
 If there is an additional variable $C$ such that $D\rightarrow C\leftarrow E$, then stratifying (i.e.~conditioning) on $C$ tends to change the association between $E$ and $D$. This can lead to biases in effect estimation. 
  This is known as \emph{collider-bias}.
  On the other hand, if   $D\leftarrow C\rightarrow E$ holds, then
   $C$ is a \emph{confounder} and stratifying on $C$ corresponds to bias removal. In certain
 larger graphs, such as Greenland's {\em bow-tie} example \cite{Greenland},
 stratifying on $C$ removes confounder-bias but at the same time introduces collider-bias. 
 In order to decide whether one should stratify on such a variable $C$, it is important to understand the partial correlations involved. In this application, the volume  $V_{i,j|S}(\lambda)$ can be viewed 
  as the cumulative distribution function of the prior distribution of the partial correlation $\corr(i,j|S)$ implied by the prior distribution on the parameter space,
   and we compare the two cumulative distribution functions
   $V_{E,D|C}(\lambda)$ and  $V_{E,D}(\lambda)$.

In this paper we examine $V_{i,j|S}(\lambda)$ from a geometric perspective, and 
we demonstrate how this volume can be calculated 
using tools from singular learning theory. 
To derive the asymptotics (\ref{eq:volume2}),
the main player is the correlation hypersurface, which is
the locus in $\Omega$ where $\corr(i,j|S) $ vanishes.
The first question is whether this hypersurface
is smooth, and, if not, one needs to analyze the nature of its singularities.
We study these questions for various  classes of interesting causal models,
using methods from computational algebraic geometry.

\smallskip

The remainder of this paper is organized as follows: In Section~\ref{sec:graphs} we introduce
the families of DAGs which we will be working with throughout. 
Example \ref{ex:tripartite6} illustrates the algebraic computations that are involved in our analysis.
We also discuss some simulation results, which indicate the importance of singularities when studying the volume 
$V_G(\lambda)$ of strong-unfaithful distributions. \mbox{Section \ref{sec:real_log_threshold}} presents the connection to singular learning theory \cite{LinThesis,Watanabe09}
and explains how this theory can be used to compute the volumes of the tubes ${\rm Tube}_{i,j|S}(\lambda)$.  Example \ref{ex_tubes}
illustrates our theoretical results for some very simple polynomials
in two variables.

In \mbox{Section \ref{sec:sing_locus}} we develop algebraic algorithms for analyzing the singularities
  of the correlation hypersurfaces. We show that, for the
   polynomials ${\rm det}(K_{iR,jR} )$ of interest,
  the real singular locus  is often much simpler than the complex singular locus.
  For instance, Theorem~\ref{thm:uptosix} states that  these hypersurfaces are
  always smooth for complete DAGs with up to six nodes.
   In \mbox{Section \ref{sec:trees}} we study the singularities and the volumes
                       (\ref{eq:volume1}) for trees without colliders.  
                        
                        Section \ref{sec:bias}  focuses on our second
  application, namely bias reduction in causal inference.
  Problems \ref{prob:tripart} and \ref{prob:bowtie} offer
  precise versions of conjectures by Greenland \cite{Greenland},
  in terms of   comparing different volumes
    $V_{i,j|S}(\lambda)$ for fixed $G$.
    We establish some instances of these conjectures.

In Section \ref{sec:resolution} we introduce more advanced methods,
based on the resolution of singularities \cite{Hauser, Hironaka},
for finding the exponents $\ell$ and $m$ in  (\ref{eq:volume2}).
Finally, in Section \ref{sec:integrals} we present some new results on computing the
constants $C$ and $C'$ in our asymptotics
(\ref{eq:volume2}) for tube volumes.

\section{Four classes of graphs}
\label{sec:graphs}

\begin{figure}[b!]
\centering
\subfigure[$\textrm{Star}_p$]{\includegraphics[scale=0.28]{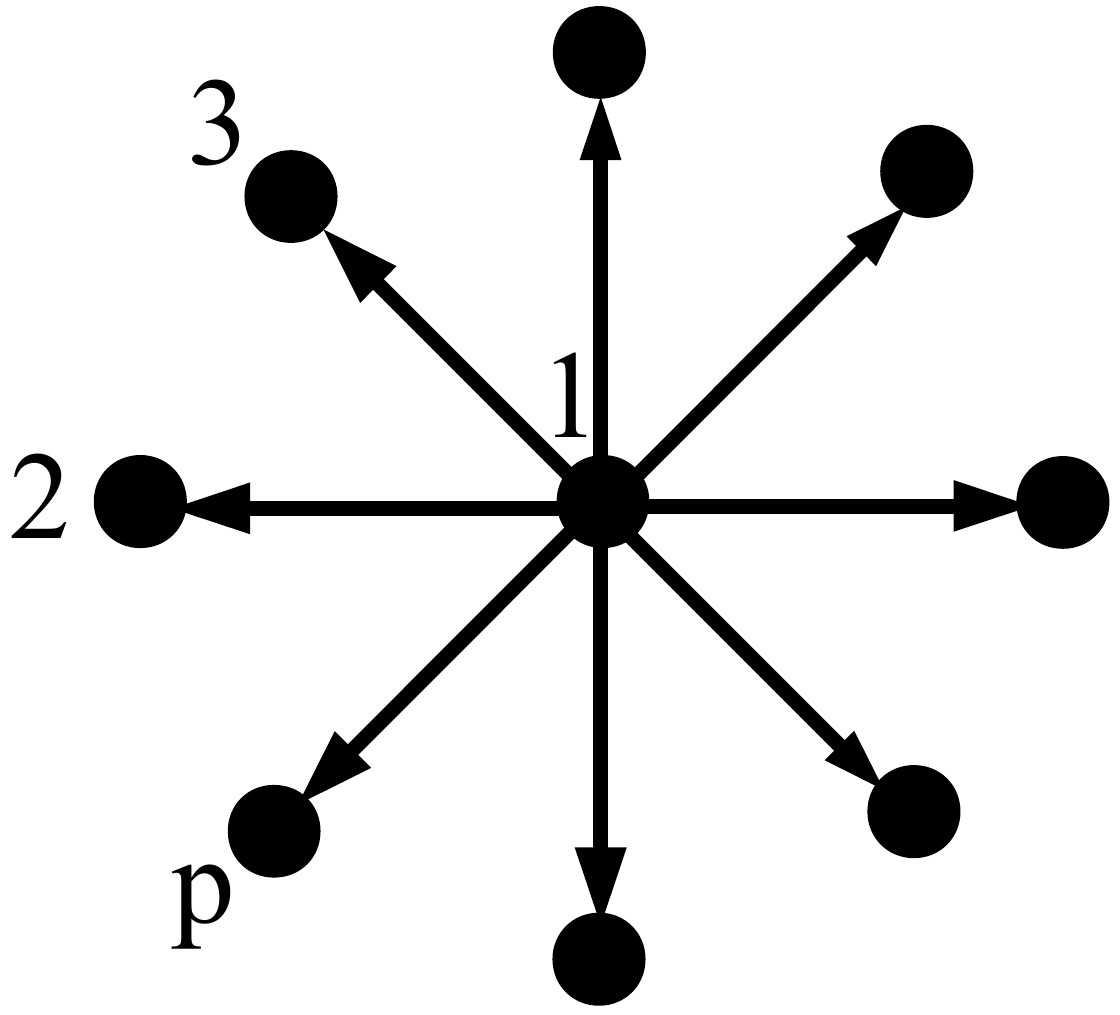}\label{fig:star}}\qquad\quad
\subfigure[$\textrm{Chain}_p$]{\includegraphics[scale=0.28]{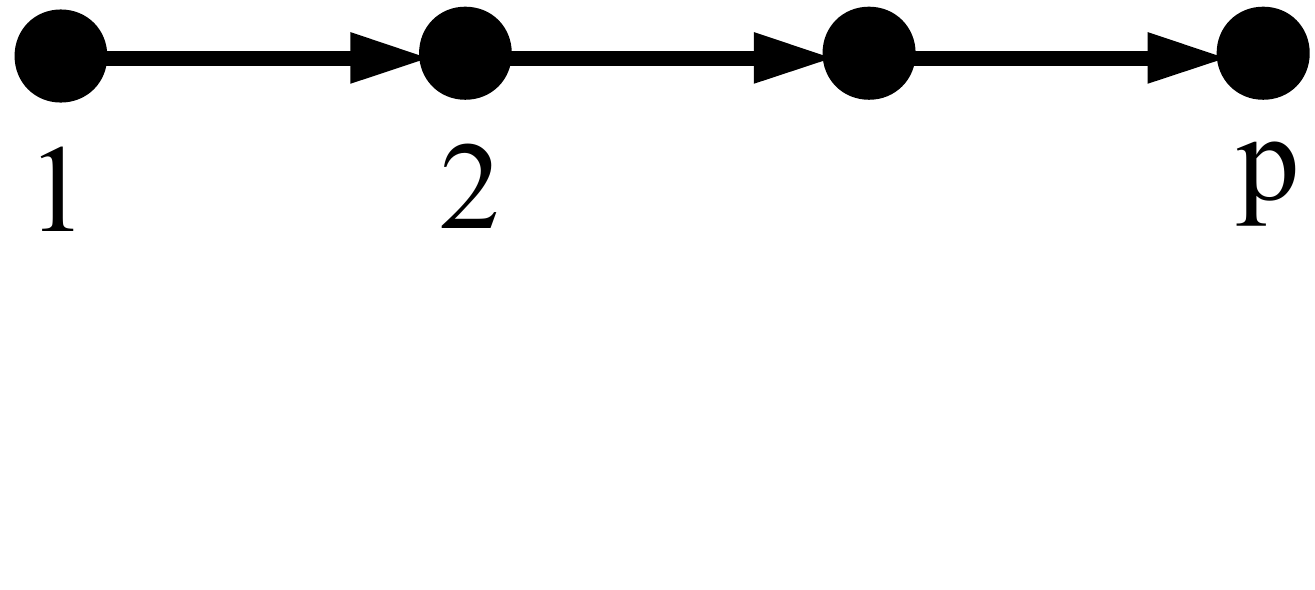}\label{fig:chain}}\qquad\quad
\subfigure[$\textrm{Tripart}_{p,p'}$]{\includegraphics[scale=0.28]{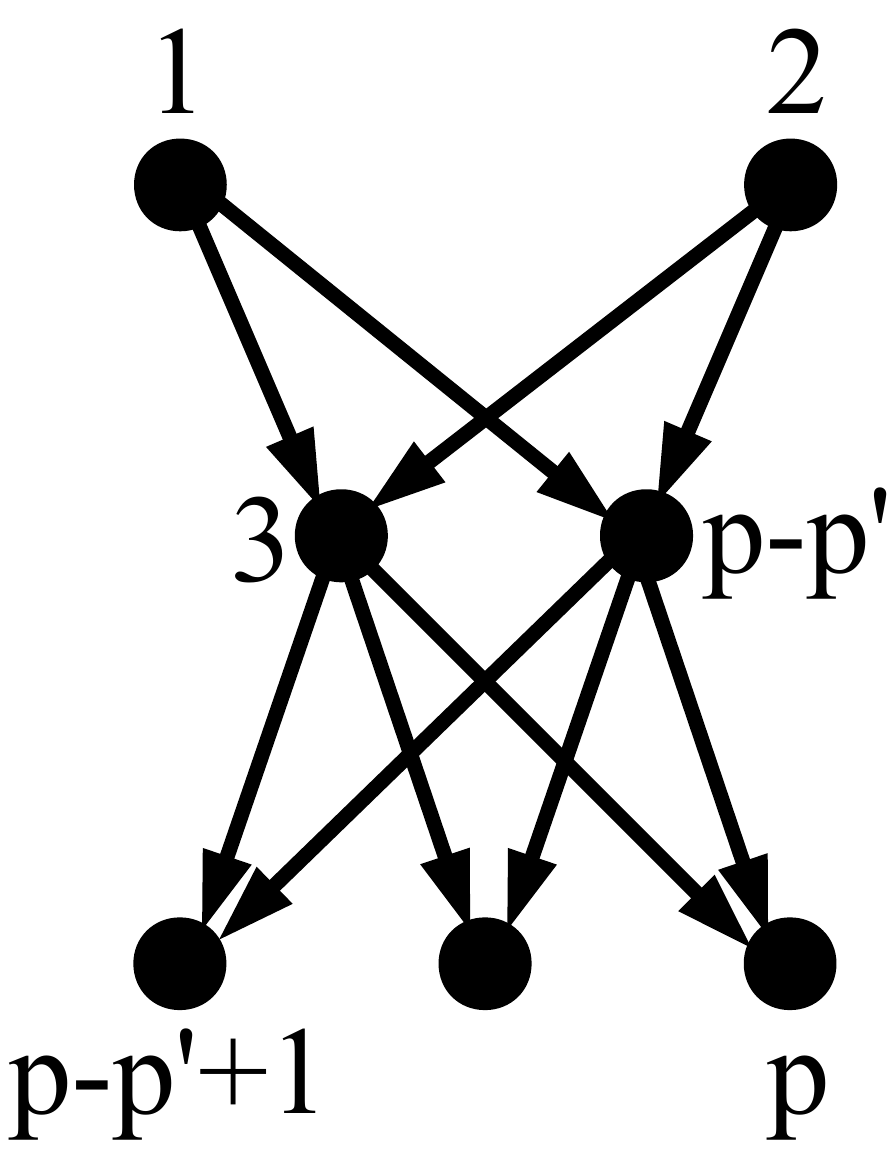}\label{fig:tripart}}\qquad\quad
\subfigure[$\textrm{Bow}_p$]{\includegraphics[scale=0.28]{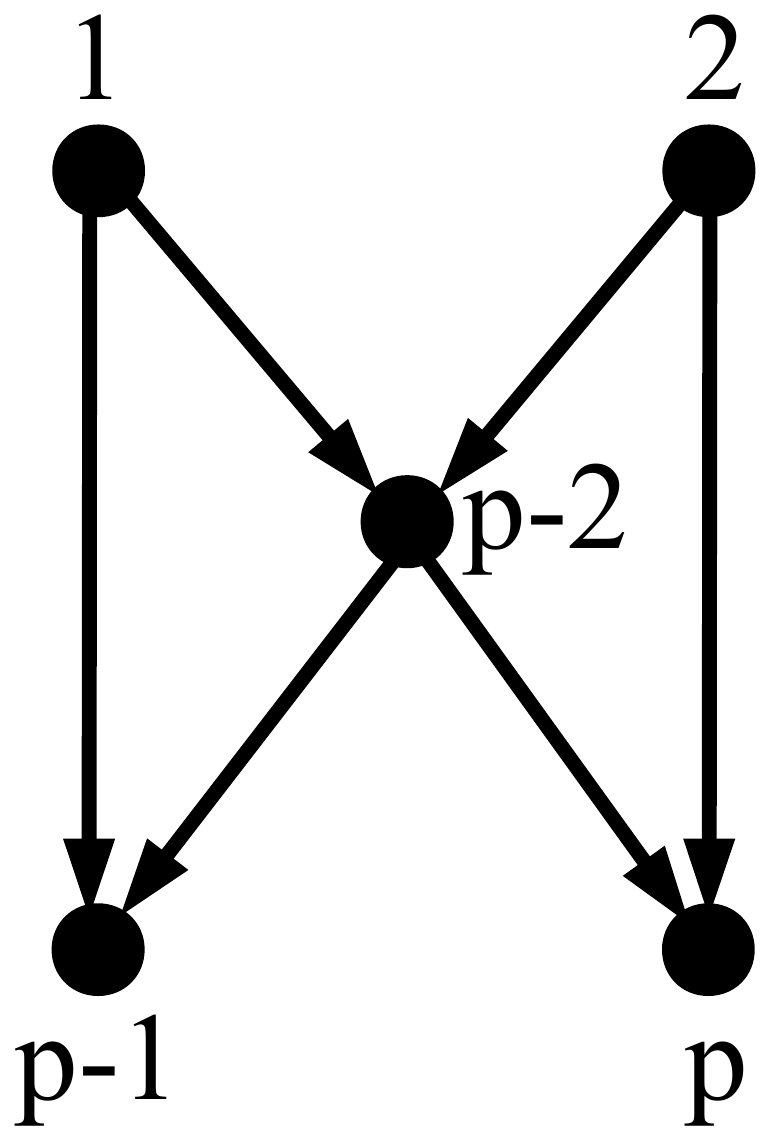}\label{fig:bowtie}}
\caption{Various classes of graphs.}
\label{fig:graphs}
\end{figure}

In this article we will be primarily working with four classes of DAGs:

\begin{enumerate}
\item[i)] \emph{Complete graphs}: We denote the complete DAG on $p$ nodes by $K_p$.
The corresponding matrix $A_{K_p}$ is strictly upper triangular and all
$\binom{p}{2}$ parameters $a_{ij}$ are present.
\item[ii)] \emph{Trees}: We call a DAG $G$ a tree graph if the skeleton of $G$ is a rooted tree and all edges point away from the root (i.e.~$G$ has no colliders). We are particularly interested in the most extreme trees, 
%(with respect to the maximal degree of the skeleton)
namely star and chain-like graphs. We denote the star graph shown in Figure \ref{fig:star} by $\textrm{Star}_p$ and the chain-like graph shown in Figure \ref{fig:chain} by $\textrm{Chain}_p$.
\item[iii)] \emph{Complete tripartite graphs}:  Let $A,B\subset V$ with $A\cap B=\emptyset$. Then we denote by $A\Rightarrow B$ the complete bipartite graph where $(a,b)\in E$ for all $a\in A$ and $b\in B$. A complete tripartite graph is denoted by $\textrm{Tripart}_{p,p'}$ with $1\leq p'\leq p-3$. It corresponds to the DAG $\{1,2\}\Rightarrow\{3,\dots ,p-p'\}\Rightarrow\{p-p'+1,\dots , p\}$ and is shown in Figure \ref{fig:tripart}.
\item[iv)] \emph{Bow-ties}: We define a bow-tie to be a complete tripartite graph $\textrm{Tripart}_{p,2}$ with two additional edges, namely $(1,p-1)$ and $(2,p)$. A bow-tie is denoted by $\textrm{Bow}_p$ and is shown in Figure \ref{fig:bowtie}. Bow-ties with $p=5$ feature prominently in Greenland's \mbox{study \cite{Greenland}.}
\end{enumerate}

The following example serves as a preview to the topics covered in this paper.

\begin{ex}
\label{ex:tripartite6}
We illustrate our objects of study for the 
tripartite graph $G = {\rm Tripart}_{6,2}$.
Since the error variances are assumed to be fixed at $1$,
 this DAG model has eight free parameters, namely the unknowns in the matrix
$$  A_G  \,\,= \,\,
\begin{pmatrix}
     \,0 & 0 & \! a_{13} \! & \! a_{14} \! & \! 0 \! & \! 0 \\
     \,0  &  0  & \! a_{23} \! & \! a_{24} \! & \! 0 \! & \! 0 \\
     \,0 &  0 & \! 0 \! & \! 0 \! &  \! a_{35} \! & \! \! a_{36} \\
     \,0  &  0 & \! 0 \! & \! 0 \! &  \! a_{45} \! & \! \! a_{46} \\
     \,0  &  0  & \! 0 \! & \! 0 \! & \! 0 \! & \! 0 \\
     \,0  &  0  & \! 0 \! & \! 0 \! & \! 0 \! & \! 0
     \end{pmatrix}.
 $$
 The covariance matrix $\Sigma$ equals the inverse (or the adjoint)  of
 the concentration matrix
$$
K \,\,= \,\,
\begin{pmatrix}
a_{13}^2+a_{14}^2+1 \! & \!   a_{13} a_{23}{+}a_{14} a_{24} \! & \!  -a_{13} \! & \!  -a_{14} \! & \!  0 \! & \!  0 \\
     a_{13} a_{23}{+}a_{14} a_{24} \! & \!  a_{23}^2+a_{24}^2+1 \! & \!  -a_{23} \! & \!  -a_{24} \! & \!  0 \! & \!  0 \\
     -a_{13} \! & \! -a_{23} \! & \!     a_{35}^2+a_{36}^2+1 \! & \!  a_{35} a_{45}{+}a_{36} a_{46} \! & \!\! -a_{35} \!\! & \! -a_{36} \\
     -a_{14} \! & \! -a_{24} \! & \!\!
      a_{35} a_{45}{+}a_{36} a_{46} \! & \!     a_{45}^2+a_{46}^2+1 \! & \!\!  -a_{45}\! \! & \! -a_{46} \\
     0 \! & \! 0 \! & \! -a_{35}  \! & \!-a_{45} \! & \! 1 \! & \! 0 \\
      0 \! & \! 0 \! & \! -a_{36} \! & \! -a_{46} \! & \! 0 \! & \!     1 
\end{pmatrix}.
$$
One conditional independence statement of interest is 
$\,1 \independent 2 \given \{5,6\}$.  
Its correlation hypersurface in 
$\R^8$ is defined by the almost-principal minor
in $\Sigma$ with rows $156$ and columns $256$, or 
the almost-principal minor in $K$
with rows $134$ and columns $234$. That determinant equals
\begin{equation}
\label{eq:fsosf}
\begin{matrix}
f &=& (1{+}a_{46}^2)a_{13} a_{23} a_{35}^2 + (1{+}a_{45}^2)a_{13} a_{23} a_{36}^2 +
 (1{+}a_{35}^2)a_{14} a_{24} a_{46}^2 + (1{+}a_{36}^2)a_{14} a_{24} a_{45}^2 \\
&& +a_{13} a_{24} a_{35} a_{45} +a_{13} a_{24} a_{36} a_{46} +a_{14} a_{23} a_{35}  a_{45}+a_{14} a_{23} a_{36}      a_{46}\\
&& - 2 a_{13} a_{23} a_{35} a_{36} a_{45} a_{46} - 2 a_{14}
  a_{24} a_{35} a_{36} a_{45} a_{46}.
\end{matrix}
\end{equation}
This is a weighted sum of all paths which d-connect nodes $1$ and $2$ given $\{5,6\}$. 
The first term in the formula (\ref{eq:fsosf}) for
$f = {\rm det}(K_{134,234})$
corresponds to the path $\,1\rightarrow 3\rightarrow 5\leftarrow 3 \leftarrow 2\,$
in $ G = {\rm Tripart}_{6,2}$,
 and the last term corresponds to the path $\,1\rightarrow 4 \rightarrow 5 \leftarrow 3 \rightarrow 6 \leftarrow 4 \leftarrow 2$.

Let $\varphi$ denote a prior on the parameter space. For this example 
we take $\varphi$ to be the Lebesgue probability measure on the cube $\Omega = [-1,+1]^8$.
The expression $V_{1,2|56} (\lambda)$ defined in (\ref{eq:volume1})
is the volume of
the region of parameters $a \in \Omega$ that satisfy 
$$|\corr(1,2\mid 5,6)|\;=\;\left|\frac{f(a)}{\sqrt{\det(K_{134,134})}\sqrt{\det(K_{234,234})}}\right| \;\leq\; \lambda.$$
As a function in $\lambda$, the  volume $V_{1,2|56} (\lambda)$ is a cumulative distribution function
on $[0, \infty)$. Our aim in this article is to determine the
 asymptotics of such a function for $\lambda \rightarrow 0$.

In Section~\ref{sec:real_log_threshold} we shall explain
the form of the asymptotics that is promised in  (\ref{eq:volume2}).
In order to find the exponents $\ell$ and $m$, the first
step is to run the algebraic algorithm in Section~\ref{sec:sing_locus}.
This answers the question whether the hypersurface in $\Omega$
defined by $f = 0$ has any singular points.
The set of such points, known as the {\em singular locus}, is
the zero set in $\Omega$ of the ideal
$$ J \quad = \quad 
\bigl\langle \,f\,,\,
 \frac{\partial{f}}{\partial{a_{13}}} ,
 \frac{\partial{f}}{\partial{a_{14}}} ,
 \frac{\partial{f}}{\partial{a_{23}}} ,
 \frac{\partial{f}}{\partial{a_{24}}} ,
  \frac{\partial{f}}{\partial{a_{35}}},
  \frac{\partial{f}}{\partial{a_{36}}},
  \frac{\partial{f}}{\partial{a_{45}}},
  \frac{\partial{f}}{\partial{a_{46}}} \,\bigr\rangle. $$
  The tools of Section~\ref{sec:sing_locus}
reveal that its {\em real radical} \cite{Marshall} 
is the intersection of three prime ideals:
\begin{eqnarray*}
\sqrt[\R]{J} &\!\! =\!\! &
\bigl\langle\, \hbox{entries of} \,
\begin{pmatrix}
a_{13} & a_{14} \\
a_{23} & a_{24} 
\end{pmatrix} \cdot
\begin{pmatrix}
a_{35} & a_{36} \\
a_{45} & a_{46}
\end{pmatrix}\, \bigr\rangle 
\\
&\!\! =\!\! &
\langle a_{13}, a_{14}, a_{23}, a_{24} \rangle \,\cap \,
\langle a_{35}, a_{36}, a_{45}, a_{46} \rangle \,\cap \,
\langle \hbox{$2{\times}2$-minors of} \,
\begin{pmatrix}
a_{13} &  a_{23} & a_{45} & a_{46} \\
 a_{14} & a_{24} & \! -a_{35} & \! -a_{36} \\
\end{pmatrix}
\rangle.
\end{eqnarray*}
Thus the hypersurface $\{f = 0\}$ is singular.
Its singular locus decomposes into three 
irreducible varieties, namely two linear spaces of dimension $4$
and one determinantal variety of dimension~$5$.
In Section \ref{sec:bias} we return to this example,  with 
focus on a statistical application of the cumulative distribution function
$V_{1,2|56} (\lambda)$. We will then show that $(\ell,m)$ equals $(1,1)$.
 \qed
\end{ex}

\begin{figure}[t!]
\centering
\subfigure[$p=6$]{\includegraphics[scale=0.4]{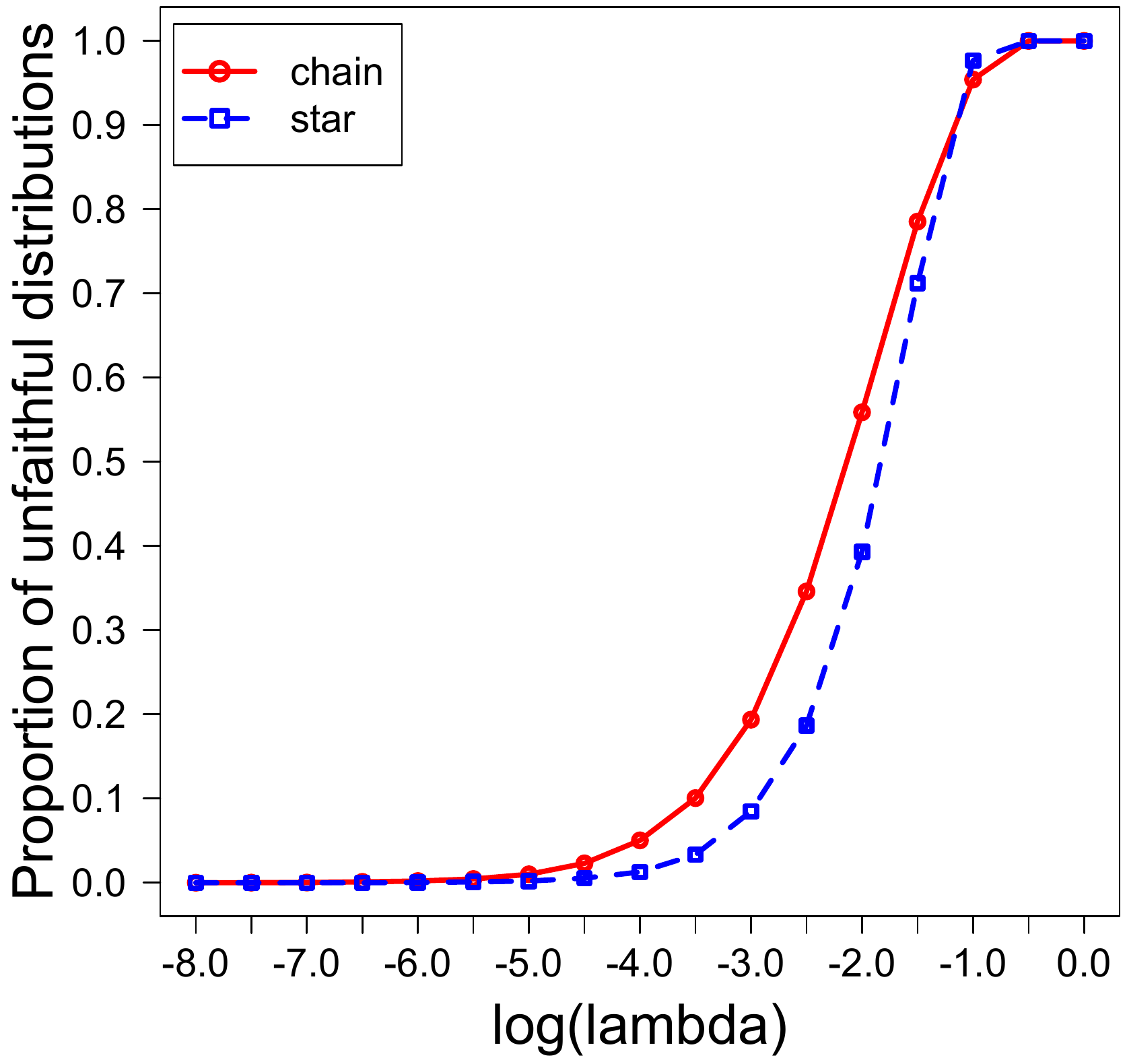}\label{Rplot_6}}\qquad\qquad
\subfigure[$p=10$]{\includegraphics[scale=0.4]{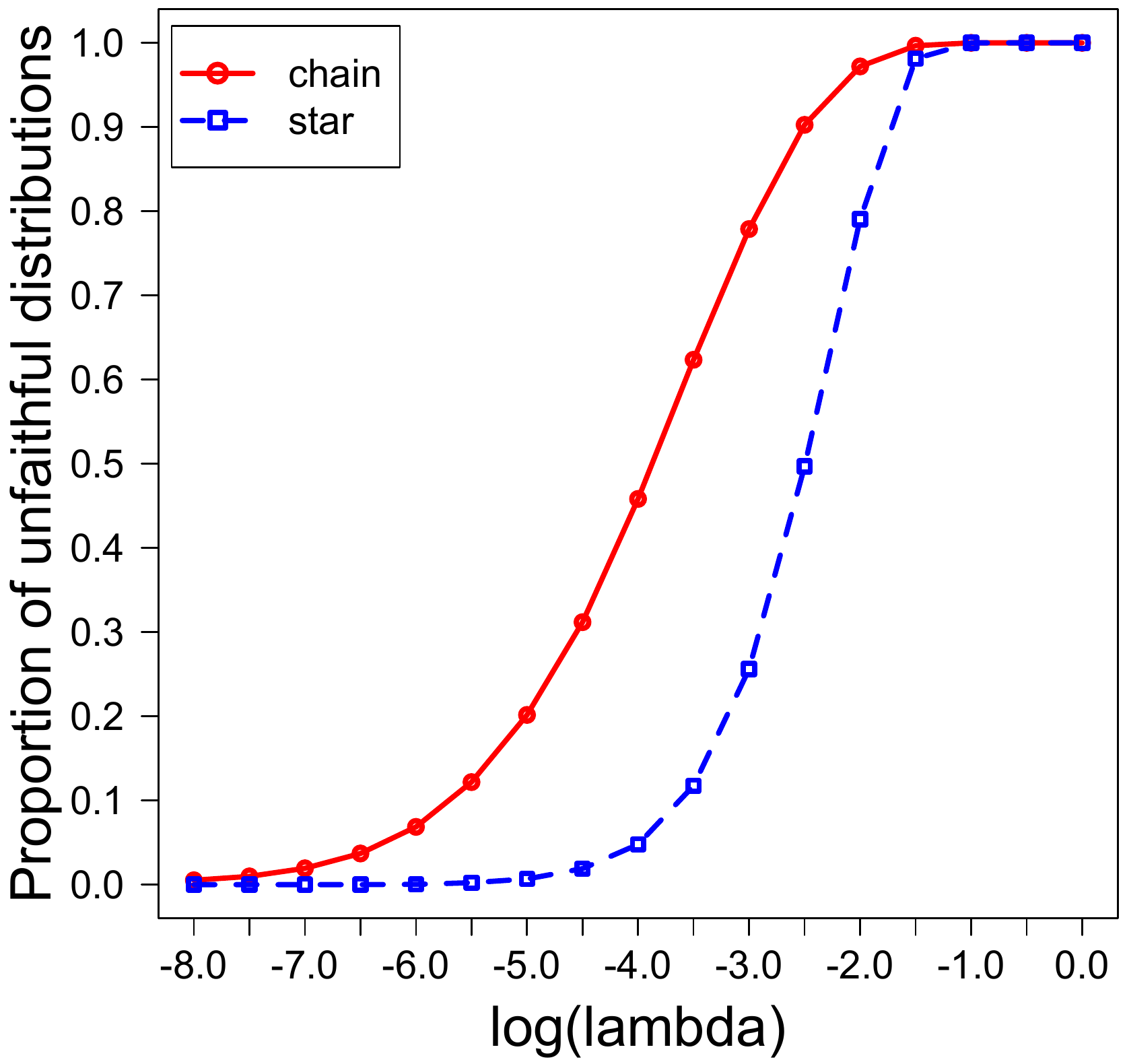}\label{Rplot_10}}
\caption{Proportion of $\lambda$-strong-unfaithful distributions for chains compared to stars.}
\label{fig:graphs_lambda}
\end{figure}

This paper extends the work 
of Uhler, Raskutti, B\"uhlmann and Yu in \cite{URBY}
on the geometry of the strong-faithfulness assumption
in the PC-algorithm.  Upper and lower bounds on the volume 
$V_G(\lambda)$ of the unfaithful region ${\rm Tube}_G(\lambda)$ for the low- as well as the high-dimensional setting
were  derived in \cite[\S 5]{URBY}. These bounds
involved only the number $|E|$ of parameters and the degrees
of the correlation hypersurfaces $\,\{{\rm det}(K_{iR,jR}) = 0\}$.
The new insight in the current paper is that singularities are essential  for the asymptotic behavior of $V_G(\lambda)$ for~$\lambda \rightarrow 0$.

What led us to this insight was taking a closer look at the
simulation results for trees. In \cite[\S 6.1.1]{URBY} trees were still
treated as one single class. We subsequently 
examined the difference between stars and chains, depicted in
Figure \ref{fig:star} and \ref{fig:chain}.
Our simulation results for ${\rm Star}_p$ and ${\rm Chain}_p$
are shown in Figure \ref{fig:graphs_lambda}.
We shall now explain the curves  in these diagrams.

The left diagram in Figure \ref{fig:graphs_lambda} is for $p = 6$ nodes
and the right diagram is for $p = 10$. Each curve is the graph
of the cumulative distribution function $V_G(\lambda)$ but with the
x-axis transformed to a logarithmic scale (with base 10). Thus we depict the
graph of the function 
\begin{equation}
\label{funct_plots}
(-\infty,0]\, \rightarrow \,[0,1],\,\, x\, \mapsto \,V_G(10^x) .
\end{equation}
The red curve is for $G = {\rm Chain}_p$ and the
blue curve is for $G = {\rm Star}_p$.
These curves were~computed by simulation:
we sampled the parameter $a$ from the uniform distribution on
$[-1,1]^{p-1}$ and we recorded the proportion of trials that landed in
${\rm Tube}_G(\lambda)$ for various values of $\lambda$.
The diagrams  show clearly that
 $V_G(\lambda)$ is smaller for star graphs than for chain-like
graphs. 

A theoretical explanation for these experimental results will be given in
Section \ref{sec:trees}. Our asymptotic theory predicts the behavior of these curves
as $x  = {\rm log}(\lambda)$ tends to $-\infty$. The point is that the correlation
hypersurfaces for chain-like graphs have deeper singularities than
those for star graphs.  The equation of any such hypersurface for a tree
is the product of a monomial and a strictly positive polynomial.
This enables us to apply Proposition \ref{prop:RLCTMonomial}.
In Theorem \ref{thm:treeV} and Corollary \ref{cor__const_trees}
we shall determine the constants $\ell$, $m$ and $C$ of
(\ref{eq:volume2}) exactly when the graph $G$ is a tree. We shall also address the question
of how to obtain $\ell$, $m$ and $C$ from simulations.

Before we get to graphical models, however, we first need
to develop the mathematics needed to analyze $V_G(\lambda)$.
 This will be done, in a self-contained manner,
in the next section.

\section{Computing the volume of a tube}
\label{sec:real_log_threshold}

We now introduce the basics regarding the computation of integrals
like the one in (\ref{eq:volume1}), and we explain why asymptotic formulas like (\ref{eq:volume2}) can be expected.
While this section is foundational for what is to follow, no reference to any statistical application is made until
 Theorem \ref{lemma_unfaithful}.  It can be read from first principle and might be of independent 
interest to a wider audience.

Let $\Omega \subset \R^d$ be a compact, full-dimensional, semianalytic subset
and consider a probability measure $\,\varphi(\omega) d\omega\,$ on
$\Omega$ where $d\omega$ is the standard Lebesgue measure and
$\varphi:\Omega \rightarrow \R$ is a real-analytic function. Also, fix an analytic function $f:\Omega \rightarrow \R$
whose hypersurface $\{\,\omega : f(\omega)=0\}\,$
has non-empty intersection with the interior of $\Omega$.
We are interested in the volume $V(\lambda)$ with respect to
the measure $\varphi$
of the region
$${\rm Tube}(\lambda)\,\,=\,\,\bigl\{\omega \in \Omega :|f(\omega)| \leq \lambda \bigr\}. $$
Here $\lambda > 0$ is a parameter that is assumed to be small.
In later sections, we often take $\Omega$ to be the cube
$[-1,+1]^d$, with $\varphi$ its Lebesgue probability measure, and
$f$ is usually a polynomial.

The asymptotics of the volume function $V(\lambda)$ depends on the singularities of the hypersurface 
$\{f = 0\}$. This phenomenon is illustrated in Figure \ref{fig:tubes}.
Our measure for the complexity of the singularities of $f$ is a 
 pair $(\ell,m)$ of non-negative real numbers. That pair is the
 {\em  real log canonical threshold} of $f$. It 
  is related to the volume $V(\lambda)$ 
   for small values of $\lambda$ by the formula
 \begin{equation}
 \label{eq:firstorder}
V(\lambda) \,\, \approx \,\, C\,\lambda^{\ell} (-\ln \lambda)^{m-1}.
\end{equation} 
Here $C$ is a positive real constant whose 
study we shall defer until Section \ref{sec:integrals}.

\begin{figure}[t!]
\centering
\subfigure[$f(x,y)=x$]{\includegraphics[scale=0.245]{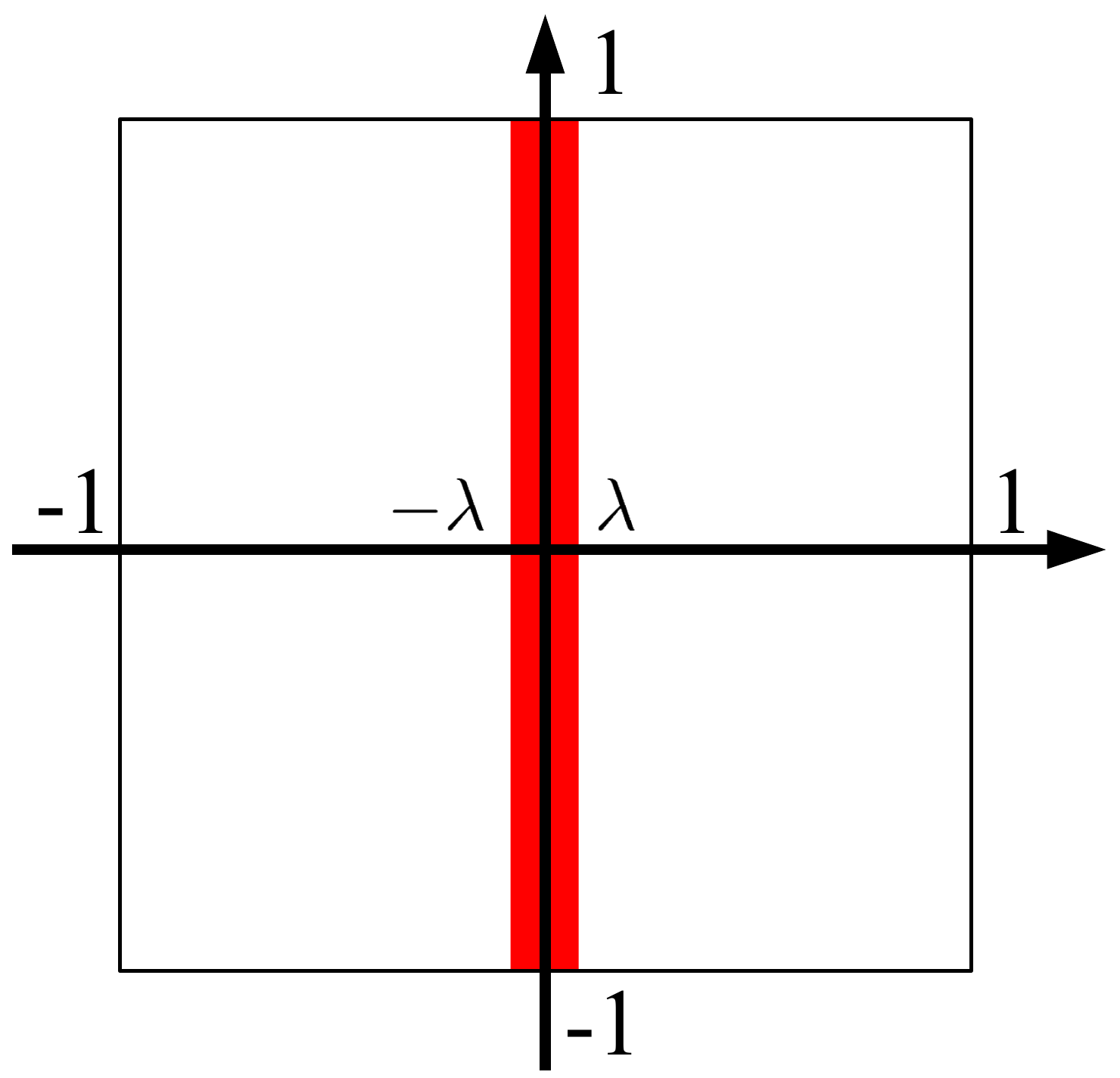}\label{fig:tubes_1}}\,
\subfigure[$f(x,y)=xy$]{\includegraphics[scale=0.245]{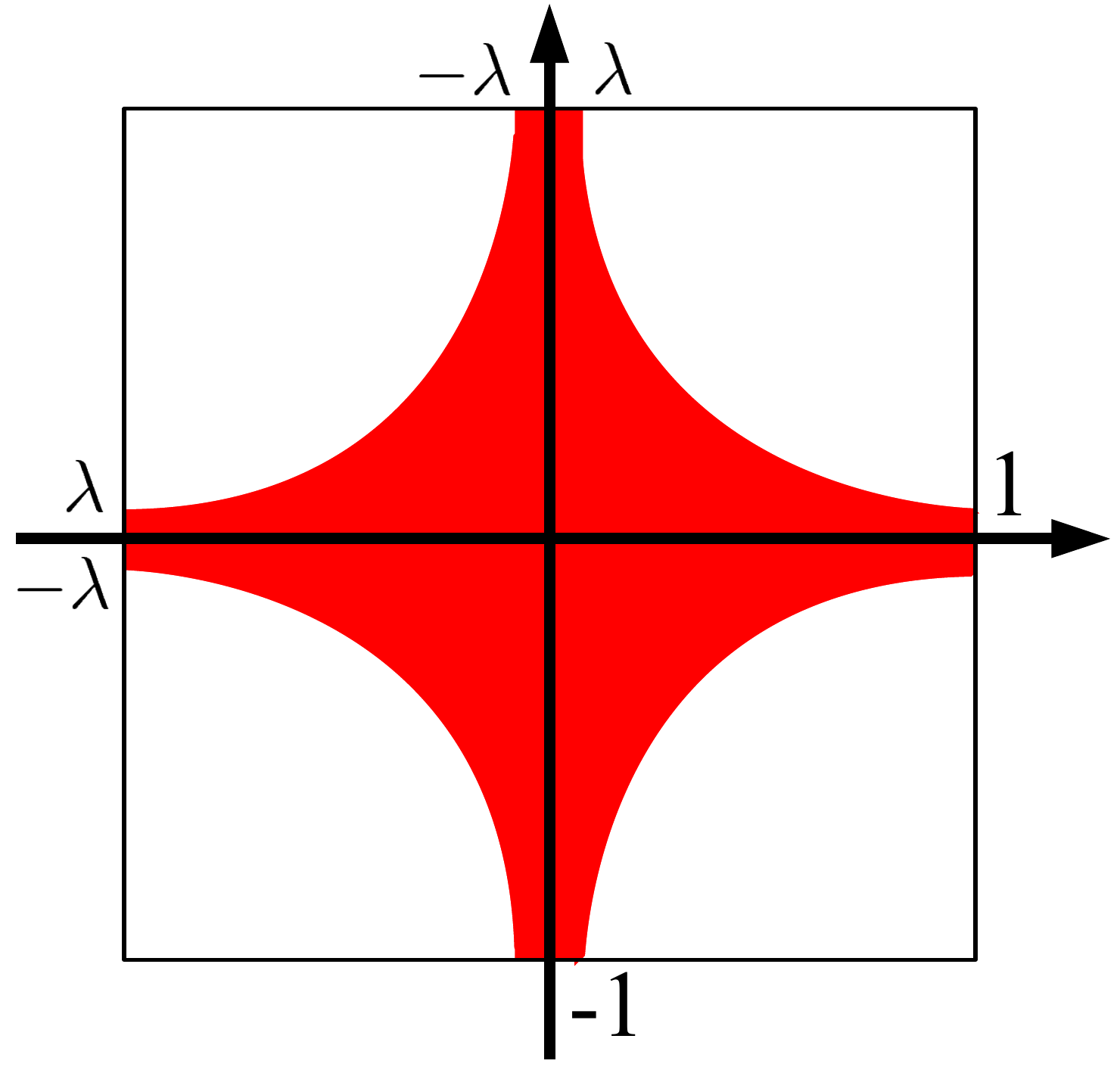}\label{fig:tubes_2}}\,
\subfigure[$f(x,y)=x^2y^3$]{\includegraphics[scale=0.245]{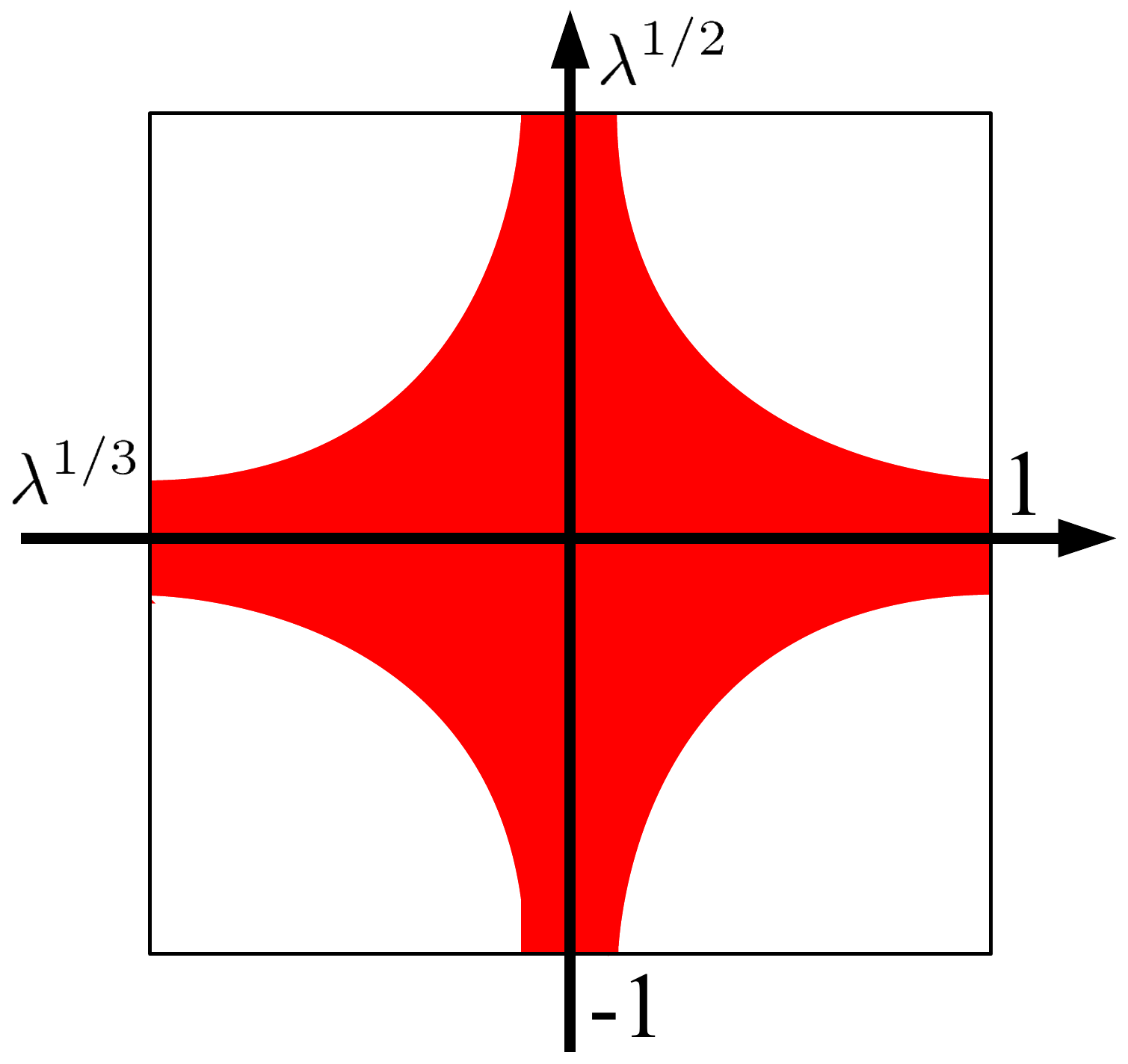}\label{fig:tubes_3}}\,
\subfigure[$f(x,y)=x^3y-xy^3$]{\includegraphics[scale=0.245]{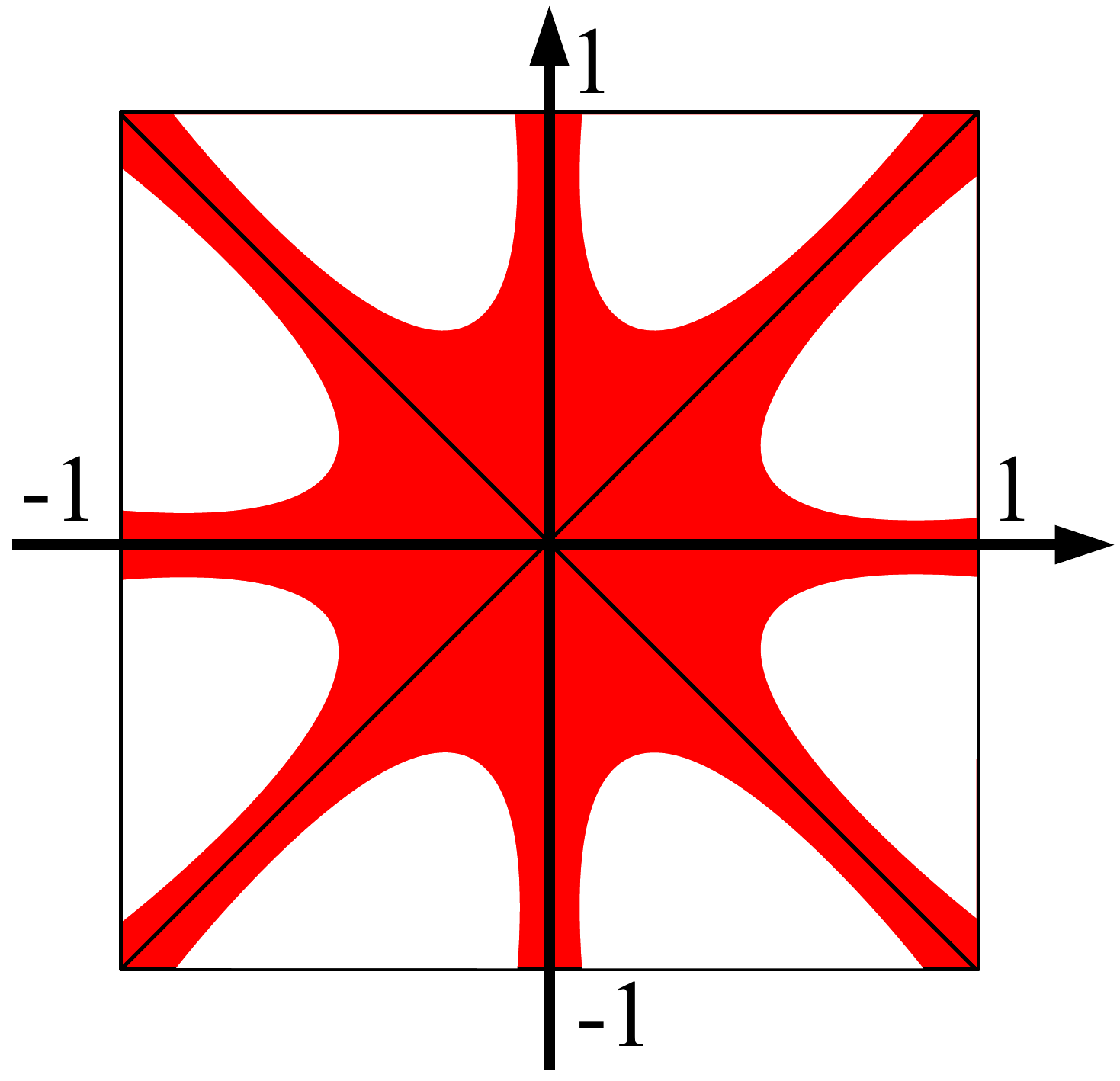}\label{fig:tubes_4}}
\caption{Tubes for various polynomials in two variables.}
\label{fig:tubes}
\end{figure}

\begin{ex}
\label{ex_tubes}
Let $d=2$ and $\varphi$ the Lebesgue probability measure on
the square $\,  \Omega = [-1,+1]^2 $.
Our problem is to compute the area of the tube
$\{(x,y)\in\Omega:|f(x,y)| \leq \lambda\}$. 
Here $f(x,y)$ is one of the four simple polynomials below
whose tubes are shown in Figure \ref{fig:tubes}.

\begin{enumerate}
\item[(a)] $f(x,y)=x$: 
The corresponding tube  is a rectangle and its area equals
$$V(\lambda) =  \lambda.$$
So, in this example, we have $(\ell,m)=(1,1)$ and $C=1$.
For other lines, the value of $C$ will change.
Proposition \ref{thm:smoothFunctionRLCT} below shows that 
 $(\ell,m)=(1,1)$ for smooth hypersurfaces.
\item[(b)] $f(x,y)=xy$: The tube in Figure \ref{fig:tubes_2}
consists of four copies of a region that is the union of a small rectangle
and a certain area under a hyperbola. Using calculus, we find
$$V(\lambda) \,\,=\,\, 4\left( \lambda + \int_{\lambda}^1 \frac{\lambda}{x} dx\right)\frac{1}{4} 
\,\,=\,\, \lambda(-\ln\lambda)+\lambda . $$
The logarithm function appears in this case.
 We have $(\ell,m)=(1,2)$ and $C=1$.
\item[(c)] $f(x,y)=x^2y^3$: The corresponding tube is shown in Figure \ref{fig:tubes_3}. Its area equals
$$V(\lambda) \,\,=\,\, 4\left( \lambda^{1/2} + \int_{\lambda^{1/2}}^1 \lambda^{1/3}x^{-2/3} dx\right)\frac{1}{4} 
\,\,=\,\,  3\lambda^{1/3}-2\lambda^{1/2}.$$
So, the real log canonical threshold equals
  $(\ell,m)=(\frac{1}{3},1)$, and we have $C=3$.
See Proposition \ref{prop:RLCTMonomial} for
   a  formula for $(\ell,m)$ when $f$ is a monomial in any number of variables.
\item[(d)] $f(x,y)=xy(x+y)(x-y)$: The corresponding tube is shown in Figure \ref{fig:tubes_4}. This example is a slight generalization of (b). As in (b) there is just one singularity at the origin,
given by the intersection of lines. Computing the area
$V(\lambda)$ is more challenging.
In Example \ref{ex:fourlines} we shall see that
the real log canonical threshold equals   $(\ell,m)=(\frac{1}{2},1)$.
\end{enumerate}
For general bivariate polynomials $f(x,y)$ we are facing a hard calculus problem, namely integrating the function  $y = y(x)$ that is defined implicitly by $f(x,y) = \lambda$.
We can approach this by expanding $y$ as a 
Puiseux series in $\lambda$ whose coefficients depend on $x$.
Integrating these coefficients leads to asymptotic formulas in $\lambda$.
These are consistent with what is to follow.
\qed
\end{ex}

We now return to the general setting defined at the beginning of this section.
Let $W$ be a random variable taking values in $\Omega$ with distribution $\varphi$. The volume $V(\lambda)$ with respect to the measure $\varphi$ can then be viewed as the 
cumulative distribution function of the random variable $f(W)$. The corresponding 
probability distribution
function $
v(\lambda) = dV/d\lambda
$
is called the \emph{state density function}. Its Mellin transform is known 
as the  \emph{zeta function} of $f$. It is denoted by
$$
    \zeta(z) \,=\, \int_0^{\infty} \lambda^{-z} v(\lambda) d\lambda \,=\, \int_\Omega
|f(\omega)|^{-z}\varphi(\omega) d\omega 
\qquad \hbox{for }z \in \C.
$$
According to asymptotic theory
\cite{AGV,LinThesis,Watanabe09}, our volume  has
the asymptotic series expansion
\begin{equation}
\label{eq:asymp}
V(\lambda) \,\approx\, \sum_{\ell} \sum_{m=1}^d C_{\ell,m} \lambda^\ell (-\ln \lambda)^{m-1}.
\end{equation}
Here the index $\ell$ runs over some arithmetic progression of positive
rational numbers and $d$ is the dimension of the parameter space
$\Omega$. The equation (\ref{eq:asymp}) is valid
for sufficiently small $\lambda > 0$.
To be precise, writing $V(\lambda) \approx \sum_{i=1}^{\infty}
g_i(\lambda)$, where $g_1(\lambda) > g_2(\lambda) > \cdots$ for small
$\lambda$, means that 
\begin{equation}
\label{eq:curly_equal}
\lim_{\lambda \rightarrow 0} \frac{V(\lambda) - \sum_{i=1}^k
  g_i(\lambda)}{g_k(\lambda)} \,=\, 0 \qquad
\hbox{for each positive integer $k$}.
\end{equation}
Using the little-o notation, this is equivalent to $V(\lambda) = \sum_{i=1}^{k}
g_i(\lambda) + o(g_k(\lambda))$ as $\lambda \rightarrow 0$ for each positive integer $k$. It is a common
misconception to think that the infinite series converges to $V(\lambda)$
for each fixed $\lambda$ when $\lambda$ is small. Rather, it means that for each fixed $k$, the $k$-term approximation for
$V(\lambda)$ gets better as $\lambda \rightarrow 0$. We will primarily be interested in the first term approximation
(\ref{eq:firstorder}).

\begin{defn}[{\cite[\S 4.1]{LinThesis}},{\cite[\S
    7.1]{Watanabe09}}] \label{thm:DefRLCT}
    We here define the {\em real log canonical
threshold} $(\ell, m)$ of $f$ over $\Omega$ with respect to $\varphi$.
This is a pair in $\mathbb{Q}_+\times\mathbb{Z}_+$ which we denote by
$\RLCT_\Omega(f; \varphi)$. It  measures
 the complexity of the singularities of the hypersurface defined by
$f(\omega)=0$. 

The following four definitions 
of $\,\RLCT_\Omega(f; \varphi) = (\ell,m)\,$ are known to be equivalent:
\begin{enumerate}
\item[(i)] For large $N>0$, the {\em Laplace integral}
$$
\displaystyle Z(N) = \int_\Omega e^{-N|f(\omega)|}\, \varphi(\omega)\,d\omega
$$ 
is asymptotically $C N^{-\ell} (\ln N)^{m-1}$ for some
constant $C$.
\item[(ii)] The {\em zeta function}
$$
\displaystyle \zeta(z) = \int_\Omega | f(\omega)|^{-z} \, \varphi(\omega)\,d\omega
$$
has its smallest pole at $z = \ell$ and that pole has multiplicity $m$.
\item[(iii)] For small $\lambda > 0$, the {\em volume function}
$$
V(\lambda) = \int_{|f(\omega)|\leq \lambda}\, \varphi(\omega)\,d\omega
$$
is asymptotically $C\,\lambda^{\ell} (-\ln \lambda)^{m-1}$ for some
constant $C$.
\item[(iv)] For small $\lambda > 0$, the {\em state density function}
$$
v(\lambda) = \frac{d}{d\lambda} \int_{|f(\omega)|\leq \lambda}\, \varphi(\omega)\,d\omega
$$
is asymptotically $C \lambda^{\ell-1}(-\ln \lambda)^{m-1}$ for some
constant $C$.
\end{enumerate}
If the real analytic hypersurface $\{\omega \in \Omega : f(\omega)=0\}$ is empty, we
set $\ell = \infty$ and we leave $m$ undefined. We say
  that $(\ell_1, m_1) <
(\ell_2,m_2)$ if $\ell_1 < \ell_2$ or if $\ell_1 = \ell_2$ and $m_1 >
m_2$. Hence, the pairs are ordered reversely by the size of $\lambda^{\ell} (-\ln \lambda)^{m-1}$
for sufficiently small $\lambda > 0$.
\end{defn}

Let us provide some intuition for the ordering of the pairs $(\ell,m)$.
The real log canonical threshold is a measure of complexity for singularities.
Analytic varieties can be
stratified into subsets where this measure is constant.
The highest stratum contains
the smooth points of the variety. As we go deeper, to strata with
lower real log canonical thresholds, we
encounter singularities of increasing complexity. The volumes of $\lambda$-fattenings of deeper
singularities will, asymptotically as $\lambda$ goes to zero, also be larger than those of their less complex
counterparts. For instance, in both Figures \ref{fig:tubes_2} and
\ref{fig:tubes_3} the singular locus consists of the origin, but  the $\lambda$-fattening of the origin in  Figure \ref{fig:tubes_3} is larger than in Figure \ref{fig:tubes_2}. 
See also Example \ref{ex:mono_s_t}.

\begin{ex}
\label{ex:d-ball}
Let $f(\omega) = \omega_1^2 + \omega_2^2 + \cdots  +\omega_d^2$
and $\varphi$ the Lebesgue probability measure on $\Omega = [-1,+1]^d$.
Then ${\rm Tube}(\lambda)$ is the {\em standard ball} of radius $\lambda^{1/2}$, whose
$\varphi$-volume is
\begin{equation*}
 V(\lambda) \,\, = \,\,
\frac{\pi^{d/2}}{2^d \cdot \Gamma(\frac{d}{2} + 1)} \cdot \lambda^{d/2}.
\end{equation*}
By Definition \ref{thm:DefRLCT} (iii),  the  real log canonical threshold equals
$\,{\rm RLCT}_\Omega(f; \varphi) = (d/2,1)$. \qed
\end{ex}

We now list some formulas for computing the real log canonical threshold. A first useful fact is that
$\RLCT_\Omega(f; \varphi)$  is independent of
       the underlying measure $\varphi$ as long as it is positive everywhere. 
We can thus assume that   $\varphi$ is the uniform distribution on~$\Omega$.

\begin{prop}
\label{prop:phi}
 If $\varphi:\Omega \rightarrow \R$ is strictly positive and
$1$ denotes the constant unit function on $\Omega$, then
 $$\RLCT_\Omega(f;\varphi)\, = \,\RLCT_\Omega(f;1).$$
 \end{prop}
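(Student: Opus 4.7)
The proof plan rests on a simple sandwich argument, combined with the rigidity of the asymptotic form $\lambda^{\ell}(-\ln \lambda)^{m-1}$.

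First, I would invoke compactness. Since $\Omega\subset \R^d$ is compact and $\varphi$ is real-analytic (hence continuous) and strictly positive on $\Omega$, there exist constants $0 < c \le C < \infty$ with
\[
   c \;\le\; \varphi(\omega) \;\le\; C \qquad \text{for all } \omega\in\Omega.
\]
Here compactness of $\Omega$ is essential: without it, $\varphi$ could decay to $0$ or blow up and destroy the comparison.

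Next, I would work through characterization (iii) of Definition \ref{thm:DefRLCT}. Writing
\[
   V_\varphi(\lambda) \,=\, \int_{|f(\omega)|\le \lambda}\varphi(\omega)\,d\omega
   \quad\text{and}\quad
   V_1(\lambda) \,=\, \int_{|f(\omega)|\le \lambda} d\omega,
\]
the pointwise bound on $\varphi$ immediately gives
\[
   c\, V_1(\lambda) \;\le\; V_\varphi(\lambda) \;\le\; C\, V_1(\lambda)
   \qquad\text{for every } \lambda > 0.
\]
Let $(\ell_1,m_1) = \RLCT_\Omega(f;1)$ and $(\ell_\varphi,m_\varphi)=\RLCT_\Omega(f;\varphi)$. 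By Definition \ref{thm:DefRLCT}(iii), there are positive constants $C_1, C_\varphi$ with $V_1(\lambda) \sim C_1 \lambda^{\ell_1}(-\ln\lambda)^{m_1-1}$ and $V_\varphi(\lambda) \sim C_\varphi\lambda^{\ell_\varphi}(-\ln\lambda)^{m_\varphi-1}$ as $\lambda \to 0^+$. Dividing the sandwich by $V_1(\lambda)$ shows that
\[
   c \;\le\; \lim_{\lambda \to 0^+}\frac{V_\varphi(\lambda)}{V_1(\lambda)} \;\le\; C,
\]
provided the limit exists, and more generally the ratio stays in $[c,C]$ along any subsequence. Since
\[
   \frac{V_\varphi(\lambda)}{V_1(\lambda)} \;\sim\; \frac{C_\varphi}{C_1}\,\lambda^{\ell_\varphi - \ell_1}(-\ln\lambda)^{m_\varphi - m_1},
\]
boundedness away from $0$ and $\infty$ forces $\ell_\varphi = \ell_1$ and $m_\varphi = m_1$. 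This yields the claim.

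The only step that requires any care is the rigidity argument at the end: one must verify that a function of the form $\lambda^{a}(-\ln\lambda)^{b}$ can be bounded above and below by positive multiples of the constant $1$ in a neighborhood of $\lambda=0$ only when $a=0$ and $b=0$. This is elementary: if $a>0$ the expression tends to $0$, if $a<0$ it tends to $\infty$, and if $a=0$ but $b\ne 0$ the factor $(-\ln\lambda)^b$ still tends to $0$ or $\infty$. I do not anticipate any genuine obstacle here; the whole proof is essentially the observation that multiplicative sandwiching by positive constants preserves the leading asymptotic exponents. (Alternatively, one could sandwich the Laplace integral of (i) or the zeta function of (ii) and read off equality of the exponent pairs from either of those characterizations.)
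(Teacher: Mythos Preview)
Your sandwich argument is correct and complete: compactness of $\Omega$ together with continuity and strict positivity of $\varphi$ yield uniform bounds $c\le\varphi\le C$, and these propagate to the volume functions $V_\varphi$ and $V_1$, forcing the leading exponent pairs to agree via the elementary rigidity of $\lambda^a(-\ln\lambda)^b$.

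The paper's own proof consists only of a citation to \cite[Lemma~3.8]{LinThesis}, so there is no substantive in-paper argument to compare against. Your write-up is a self-contained version of what is presumably the same standard argument carried out in that reference; in that sense you have supplied more than the paper does.
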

\begin{proof} See \cite[Lemma 3.8]{LinThesis}.
\end{proof}

\begin{prop} \label{prop:RLCTMonomial}
Suppose that $\Omega$ is a neighborhood of the origin. If $\,f(\omega)=
\omega_1^{\kappa_1} \cdots
\omega_d^{\kappa_d} g(\omega)\,$ 
where the $\kappa_i$ are nonnegative integers and
the function $g:\Omega \rightarrow \R$ does not have any zeros, then
$\,\RLCT_\Omega(f;1) = (\ell,m)\,$ where 
$$\ell \, =\, \min_i \frac{1}{\kappa_i} \quad \hbox{and} \,\,\quad m
\,=\,  \left|\left\{\operatorname*{argmin}_i \frac{1}{\kappa_i} \right\}\right|.$$
\end{prop}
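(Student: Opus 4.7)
The plan is to invoke Definition~\ref{thm:DefRLCT}(ii), which identifies $(\ell,m)$ with the location and multiplicity of the smallest pole of the zeta function $\zeta(z) = \int_\Omega |f(\omega)|^{-z}\,d\omega$. Writing $|f|^{-z} = |\omega^\kappa|^{-z}\,|g|^{-z}$ with $\omega^\kappa := \omega_1^{\kappa_1}\cdots\omega_d^{\kappa_d}$, I would use that $g$ is continuous and nowhere zero on the compact neighborhood $\Omega$, so that $0 < m_g \le |g(\omega)| \le M_g$ for some constants. For real $z>0$ this yields the sandwich
\[
M_g^{-z}\,\zeta_M(z) \,\le\, \zeta(z) \,\le\, m_g^{-z}\,\zeta_M(z), \qquad \zeta_M(z) := \int_\Omega |\omega^\kappa|^{-z}\,d\omega,
\]
reducing the problem to analyzing the pure-monomial zeta function $\zeta_M$.

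To handle $\zeta_M$, I would bracket $\Omega$ between two coordinate boxes $B_1 \subseteq \Omega \subseteq B_2$ centered at the origin, so by positivity $\zeta_M^{B_1}(z) \le \zeta_M^\Omega(z) \le \zeta_M^{B_2}(z)$. On a box $[-\epsilon,\epsilon]^d$, Fubini factors $\zeta_M$ into one-dimensional integrals: the coordinates with $\kappa_i = 0$ contribute the constant $2\epsilon$, and for $\kappa_i \ge 1$ one gets $\int_{-\epsilon}^\epsilon |\omega_i|^{-\kappa_i z}\,d\omega_i = 2\epsilon^{1-\kappa_i z}/(1-\kappa_i z)$, meromorphic with a simple pole of positive residue at $z = 1/\kappa_i$. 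Hence each box version of $\zeta_M$ is meromorphic with smallest pole at $z = \ell = \min_i 1/\kappa_i$, of order $m = |\{i : 1/\kappa_i = \ell\}|$, and with strictly positive leading coefficient. Combined with the first sandwich, $(\ell - z)^m \zeta(z)$ is trapped between two strictly positive limits as $z \nearrow \ell$.

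The last step is to transfer this information to the actual smallest pole of $\zeta$. I would invoke the general fact, underlying Definition~\ref{thm:DefRLCT} and established via Hironaka resolution in Section~\ref{sec:resolution}, that $\zeta(z)$ is meromorphic on a half-plane with its poles on the positive real axis and with nonnegative principal coefficients; together with the positive two-sided bound, this forces the smallest pole of $\zeta$ to sit at $z = \ell$ with multiplicity exactly $m$, yielding $\RLCT_\Omega(f;1) = (\ell, m)$. The main obstacle is precisely this transfer: bounded positive sandwiching multipliers could in principle mask a pole of different order in $\zeta$, and ruling this out is what requires the meromorphicity and positivity of residues supplied by the general theory. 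A fully self-contained alternative bypasses the issue by using Definition~\ref{thm:DefRLCT}(iii) directly: from the volume sandwich $\{|\omega^\kappa| \le \lambda/M_g\} \subseteq \{|f| \le \lambda\} \subseteq \{|\omega^\kappa| \le \lambda/m_g\}$ one need only compute the monomial tube volume in closed form by iterated integration, where the asymptotic $\lambda^\ell(-\ln\lambda)^{m-1}$ emerges directly.
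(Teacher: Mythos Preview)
Your proposal is correct, but it follows a different route from the paper. The paper's proof is a one-liner: it observes that Proposition~\ref{prop:RLCTMonomial} is the special case of Theorem~\ref{thm:RLCTMonomial2} in which the factor $g$ has no zeros, and defers everything to the later proof of that theorem. That proof in turn localizes via Lemma~\ref{lemma:RLCTLocalization}, divides out the nonvanishing unit $g$ near each point, and invokes \cite[Proposition~3.7]{LinThesis} for the pure monomial.

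Your approach is more direct and more self-contained for this special case: you sandwich $|g|$ between positive constants on the compact $\Omega$, bracket $\Omega$ between coordinate boxes, and then read off the pole structure from the explicit Fubini factorization of the box monomial zeta function (or, equivalently, from the explicit monomial tube volume via characterization~(iii)). You also correctly isolate the one non-elementary input that remains: the two-sided positive bound on $(\ell-z)^m\zeta(z)$ along the real axis (or on $V(\lambda)/(\lambda^\ell(-\ln\lambda)^{m-1})$) only pins down $(\ell,m)$ once you know, from the general theory underlying Definition~\ref{thm:DefRLCT}, that $\zeta$ is meromorphic with positive leading Laurent coefficient (equivalently, that $V(\lambda)$ genuinely admits an asymptotic of the stated form). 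The paper's deferred route sidesteps this by building on \cite{LinThesis} directly; your route trades that citation for an explicit sandwich plus one appeal to the existence half of Definition~\ref{thm:DefRLCT}. Both are valid; yours is arguably more transparent for a reader who has not seen \cite{LinThesis}.
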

\begin{proof}
This is a special case of Theorem \ref{thm:RLCTMonomial2}
which will be proved later.
\end{proof}

Recall that
an analytic hypersurface $\{f(\omega) = 0\}$ is \emph{singular} at
a point $\omega \in \Omega$ if $\omega$ satisfies 
$$
f(\omega) = 0 \quad \hbox{and} \,\,\quad \frac{\partial f}{\partial \omega_i}(\omega) \,=\, 0 \quad \text{for }i =1, \ldots, d.
$$
If the hypersurface is not singular at any point $\omega \in \Omega$, then it is said to be
 $\emph{smooth}$.

\begin{prop} \label{thm:smoothFunctionRLCT}
If the hypersurface $\{f(\omega) = 0$\} is smooth 
then $\,\RLCT_\Omega(f;1) = (1,1)$.
\end{prop}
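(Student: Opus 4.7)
The plan is to localize the zeta function around the hypersurface $\{f=0\}$ and then apply Proposition~\ref{prop:RLCTMonomial} after a smooth change of coordinates at each boundary point of integration, using the implicit function theorem to flatten $f$ into a single coordinate.

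First I would observe that the region where $f$ is bounded away from zero contributes nothing to the smallest pole of the zeta function. Concretely, let $\varepsilon>0$ and split $\Omega = \Omega_\varepsilon \cup \Omega_\varepsilon^c$ where $\Omega_\varepsilon = \{\omega \in \Omega : |f(\omega)| \leq \varepsilon\}$. On $\Omega_\varepsilon^c$ the integrand $|f(\omega)|^{-z}$ is bounded for every $z \in \C$, so $\int_{\Omega_\varepsilon^c} |f|^{-z}\,d\omega$ is entire in $z$ and contributes no pole. Thus $\RLCT_\Omega(f;1)$ is determined purely by the local behavior of $f$ on an arbitrarily small neighborhood of $\{f=0\} \cap \Omega$. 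By Proposition~\ref{prop:phi} we may work with the constant density $1$ throughout.

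Next I would fix a point $\omega_0 \in \Omega$ with $f(\omega_0)=0$. Smoothness gives an index $i$ with $\partial f/\partial \omega_i(\omega_0) \neq 0$. The implicit function theorem then produces an open neighborhood $U$ of $\omega_0$ and an analytic diffeomorphism $\phi:V \to U$, where $V$ is a neighborhood of the origin, such that
\[
f(\phi(y)) \,=\, y_1 \qquad \text{for all } y \in V,
\]
and whose Jacobian $|\det D\phi(y)|$ is a nonvanishing analytic function on $V$. Changing variables in the zeta integral over $U$ yields
\[
\int_U |f(\omega)|^{-z}\,d\omega \,=\, \int_V |y_1|^{-z}\, g(y)\, dy,
\]
with $g(y) = |\det D\phi(y)|$ strictly positive on $V$. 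Propositions~\ref{prop:phi} and~\ref{prop:RLCTMonomial} (applied with exponents $\kappa_1 = 1$, $\kappa_i = 0$ for $i>1$) give that this local zeta integral has its smallest pole at $z=1$ with multiplicity $1$, i.e.\ the local RLCT at $\omega_0$ equals $(1,1)$.

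Finally I would glue the local pieces. By compactness, the zero set $\{f=0\}\cap \Omega$ is covered by finitely many such neighborhoods $U_1, \dots, U_N$ (handling boundary points of $\Omega$ by the same argument, since $\Omega$ is semianalytic and the flattening coordinate $y_1$ straightens any local piece of $\{f=0\}$; the remaining boundary coordinates only restrict the range of the other $y_j$, which does not affect the pole). Choosing a smooth partition of unity $\{\chi_k\}$ subordinate to the cover $\{U_k\}\cup \Omega_\varepsilon^c$, we write $\zeta(z)$ as a finite sum of local zeta integrals, each either entire or with smallest pole $(1,1)$. Since the ordering of pairs in Definition~\ref{thm:DefRLCT} takes the minimum $\ell$ and then the maximal $m$, and every local contribution gives the same $(1,1)$, we conclude $\RLCT_\Omega(f;1) = (1,1)$. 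The only mildly delicate point in the argument is the treatment of $\omega_0$ on $\partial \Omega$, but since the flattening coordinate $y_1$ is independent of the defining inequalities of $\Omega$, the local reduction to the monomial case still applies.
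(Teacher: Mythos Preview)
Your proof is correct and is essentially the same argument the paper uses: the paper simply declares this proposition a special case of Theorem~\ref{thm:RLCTMonomial2} (with $r=0$, $\varphi\equiv 1$, and $g=f$), and the proof of that theorem proceeds exactly by localizing (Lemma~\ref{lemma:RLCTLocalization}), straightening $f$ to a single coordinate via the implicit function theorem, and reducing to the monomial RLCT computation. What you have written is a direct unpacking of that proof in the smooth special case, with a partition of unity in place of the paper's minimum-over-local-RLCT formulation; these are equivalent. The only place where the paper is slightly more careful than your sketch is the boundary analysis: rather than asserting that the flattening coordinate is ``independent of the defining inequalities of $\Omega$,'' the paper argues that boundary contributions can only raise the local RLCT, so the global minimum is attained at an interior point of the hypersurface (which exists by the standing assumption in Section~\ref{sec:real_log_threshold}).
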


\begin{proof}
This is also a  special case of Theorem \ref{thm:RLCTMonomial2}.
\end{proof}

\begin{ex} \label{ex:mono_s_t}
Following up on
Example \ref{ex_tubes}, we now consider an arbitrary
monomial function $f(x,y)=x^sy^t$ on the square
$\Omega = [-1,1]^2$.
The tube looks as in Figure \ref{fig:tubes_3}.
Its area satisfies
  $$V(\lambda)
\approx \
\left\{\begin{array}{ll} 
C\lambda^{1/s} & \textrm{if } s<t \\
C\lambda^{1/t} & \textrm{if } s > t \\
C\lambda^{1/s}(-\ln \lambda) & \textrm{if } s=t
\end{array}\right.$$
This formula for the asymptotics (\ref{eq:firstorder}) 
follows from 
Definition \ref{thm:DefRLCT} (iii)
and Proposition \ref{prop:RLCTMonomial}.
\qed
\end{ex}

For the statistical applications  in this paper, the relevant  functions $f$ are polynomials.
They are determinants $ f = \det (K_{iR,jR})$, where $R = V\backslash (S\union \{i,j\})$
as in Section \ref{sec:intro}.
 Let $\RLCT(i,j|S)$ denote the corresponding real log canonical threshold over 
 $\Omega = [-1,1]^E$ with  respect to a positive density $\varphi$.
The theory developed so far says that 
the real log canonical threshold of the correlation hypersurface
gives an asymptotic volume formula for~$V_{i,j|S}(\lambda)$.

\begin{thm} \label{lemma_unfaithful}
If $\,\varphi\,$ satisfies the assumptions in Proposition \ref{prop:phi}, then as $\lambda$ tends to zero, the volume of the region ${\rm
  Tube}_{i,j|S}(\lambda)$ (see (\ref{eq:tube_ijS})) is asymptotically
$$
V_{i,j|S}(\lambda) \,\approx\,  C\,\lambda^{\ell} (-\ln \lambda)^{m-1}
$$
for some constant $C>0$ (which only depends on $G$) and $(\ell,m) = \RLCT(i,j|S)$.
\end{thm}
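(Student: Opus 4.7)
The plan is to reduce the statement to Definition \ref{thm:DefRLCT}(iii) applied to the polynomial $f = \det(K_{iR,jR})$. From (\ref{eq:parcor}), one can write
\[
 |\corr(i,j|S)(\omega)| \,=\, |f(\omega)|/h(\omega), \qquad h(\omega) := \sqrt{\det(K_{iR,iR})\cdot\det(K_{jR,jR})}.
\]
Since $K = (A_G - I)(A_G-I)^T$ and $A_G - I$ is triangular with $-1$'s on the diagonal hence invertible, $K$ is positive definite on all of $\R^{|E|}$, so its principal minors are strictly positive. Thus $h$ is a strictly positive real-analytic function on the compact set $\Omega = [-1,1]^{|E|}$. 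Substituting into (\ref{eq:volume1}) gives
\[
 V_{i,j|S}(\lambda) \,=\, \int_\Omega \mathbf{1}\bigl[|f(\omega)| \leq \lambda\, h(\omega)\bigr]\, \varphi(\omega)\, d\omega,
\]
which is precisely the volume function of Definition \ref{thm:DefRLCT}(iii) for the real-analytic function $F := f/h$. That definition therefore delivers an asymptotic of the claimed form $C\lambda^{\ell'}(-\ln\lambda)^{m'-1}$ with $(\ell',m')=\RLCT_\Omega(F;\varphi)$.

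It remains to prove the identity $\RLCT_\Omega(F;\varphi) = \RLCT_\Omega(f;\varphi) = \RLCT(i,j|S)$. I would establish this via the zeta-function characterization of Definition \ref{thm:DefRLCT}(ii):
\[
 \zeta_F(z) \,=\, \int_\Omega |f(\omega)|^{-z}\, h(\omega)^z\, \varphi(\omega)\, d\omega.
\]
After a resolution of singularities $\rho:\widetilde\Omega \to \Omega$ of $f$, every local chart takes the form $f \circ \rho = u(y)\, y_1^{\alpha_1}\cdots y_d^{\alpha_d}$ with $u$ a nonvanishing real-analytic function, and the integrand pulls back to $\prod_i |y_i|^{-\alpha_i z}$ multiplied by the real-analytic factor $|u(y)|^{-z}\, h(\rho(y))^z\, \varphi(\rho(y))\,|\mathrm{Jac}(\rho)(y)|$, which is strictly positive for real $z$. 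In the same spirit as Proposition \ref{prop:phi}, this bracketed multiplier affects only the residues of the Mellin integral (and therefore the constant $C$), not the location or multiplicity of the smallest pole, so $\zeta_F$ and $\zeta_f$ share the same leading pole $(\ell,m)=\RLCT(i,j|S)$.

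The main obstacle is precisely this invariance claim: multiplication of $f$ by the strictly positive real-analytic function $1/h$ should leave the RLCT unchanged. A naive sandwich using $0 < c_1 \leq h \leq c_2$ gives $V_f(c_1\lambda) \leq V_{i,j|S}(\lambda) \leq V_f(c_2\lambda)$, which immediately identifies the correct exponents $\ell$ and $m$ via Definition \ref{thm:DefRLCT}(iii) applied to $f$ itself, but yields only $c_1^\ell\, C_f \,\leq\, \liminf \,\leq\, \limsup \,\leq\, c_2^\ell\, C_f$ for the normalized quotient, short of a definite limit. Extracting the existence of a single positive constant $C$ therefore genuinely requires the resolution-of-singularities viewpoint developed in Section \ref{sec:resolution} (or equivalently a partition-of-unity argument that approximates $h$ chart-by-chart after resolution); this is where the delicate step of the proof truly lies.
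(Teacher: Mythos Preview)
Your proposal is correct and follows essentially the same route as the paper: apply Definition~\ref{thm:DefRLCT}(iii) to the analytic function $F=\corr(i,j|S)=f/h$, then argue that dividing by the nonvanishing analytic function $h$ leaves the RLCT unchanged. The paper disposes of the invariance step in one line by citing \cite[\S4.1]{LinThesis} (``multiplying by a unit does not change the RLCT of an analytic function''), whereas you sketch the argument yourself via resolution of singularities and the zeta function---which is exactly how that cited fact is proved---so the ``obstacle'' you flag is already handled by your own outline and is not as delicate as you suggest.
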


\begin{proof}
By part (iii) in Definition \ref{thm:DefRLCT},
the desired pair $(\ell, m)$ is the real log
canonical threshold of the partial correlation
$f = \corr(i,j|S)$. This is the algebraic (and hence analytic) function  in (\ref{eq:parcor}).
This function differs from the polynomial
$\det(K_{iR,jR})$ by a denominator that does not vanish over
$\Omega$. That denominator is a unit in the ring of real analytic
functions over $\Omega$, and multiplying by a unit does~not change the
RLCT of an analytic function
\cite[\S 4.1]{LinThesis}.
\end{proof}

We close this section by relating our results directly to the
study of unfaithfulness in \cite{URBY}.

\begin{cor}\label{prop_unfaithful}
Under the assumptions in Theorem \ref{lemma_unfaithful}, as $\lambda$ tends to zero, the volume of $\lambda$-strong-unfaithful distributions  satisfies
$$
V_G(\lambda) \, \approx\, C\,\lambda^{\ell} (-\ln \lambda)^{m-1}
$$
for some constant $C > 0$. Here $(\ell,m)$ is the minimum of the pairs
$\RLCT(i,j|S)$, where  $(i,j,S)$ runs over all triples
 in the DAG $\,G$ such that $i$ is not d-separated from $j$ given $S$.
\end{cor}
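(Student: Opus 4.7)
The plan is to express $V_G(\lambda)$ as the volume of a union of tubes and then show that the asymptotics of this union are governed by the tube(s) achieving the smallest real log canonical threshold, invoking Theorem \ref{lemma_unfaithful} on each individual tube. By the definition immediately after (\ref{eq:tube_ijS}), $V_G(\lambda) = \vol\bigl(\bigcup_\alpha T_\alpha(\lambda)\bigr)$, where $\alpha = (i,j,S)$ ranges over all triples with $i$ not d-separated from $j$ given $S$ and $T_\alpha(\lambda) := {\rm Tube}_{i,j|S}(\lambda)$. Theorem \ref{lemma_unfaithful} gives $V_\alpha(\lambda) \approx C_\alpha \lambda^{\ell_\alpha}(-\ln\lambda)^{m_\alpha-1}$ with $(\ell_\alpha,m_\alpha) = \RLCT(i,j|S)$. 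Setting $(\ell,m) = \min_\alpha (\ell_\alpha, m_\alpha)$ in the ordering of Definition \ref{thm:DefRLCT} and $A^* = \{\alpha : (\ell_\alpha,m_\alpha)=(\ell,m)\}$, that ordering forces $V_\alpha(\lambda) = o(\lambda^\ell(-\ln\lambda)^{m-1})$ for every $\alpha \notin A^*$.

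Next I would establish a two-sided sandwich. Subadditivity yields $V_G \leq \sum_\alpha V_\alpha$, while $V_G \geq V_{\alpha_0}$ for any fixed $\alpha_0 \in A^*$. Combining these with the previous paragraph gives
$$C_{\alpha_0}\,\lambda^\ell(-\ln\lambda)^{m-1}(1+o(1)) \;\leq\; V_G(\lambda) \;\leq\; \Bigl(\sum_{\alpha\in A^*}C_\alpha\Bigr)\lambda^\ell(-\ln\lambda)^{m-1}(1+o(1)),$$
which already establishes $V_G(\lambda) \asymp \lambda^\ell(-\ln\lambda)^{m-1}$ with a positive constant.

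To promote the sandwich to the exact asymptotic required by (\ref{eq:curly_equal}), I would invoke inclusion--exclusion, $V_G(\lambda) = \sum_\alpha V_\alpha(\lambda) - \sum_{\alpha<\beta}\vol(T_\alpha\cap T_\beta)+\cdots$, and show each higher-order term is strictly smaller than $\lambda^\ell(-\ln\lambda)^{m-1}$. For a pairwise intersection I would use the containment $T_\alpha\cap T_\beta \subseteq \{f_\alpha^2+f_\beta^2 \leq 2\lambda^2\}$ and apply Definition \ref{thm:DefRLCT}(iii) to the analytic function $f_\alpha^2+f_\beta^2$; replacing $\lambda$ by $\lambda^2$ inside the RLCT asymptotic boosts the exponent to $2\ell_{\alpha\beta}$, which strictly exceeds $\ell$ provided the correlation hypersurfaces $\{f_\alpha = 0\}$ and $\{f_\beta=0\}$ meet transversally (and in particular in codimension at least two). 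Granting this, one reads off the limit constant $C = \sum_{\alpha\in A^*} C_\alpha$.

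The main obstacle is precisely this transversality input: when two defining polynomials $f_\alpha, f_\beta$ share an irreducible component, the bound $2\ell_{\alpha\beta}>\ell$ fails, and one must first group such triples into equivalence classes and pass to a single representative before the argument can be applied. For the DAGs analyzed in later sections (trees, tripartite graphs, bow-ties, and small complete graphs) the correlation hypersurfaces lie in sufficiently general position that the straightforward version of this last step applies, and the corollary follows.
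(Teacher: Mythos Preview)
Your second paragraph is essentially the paper's entire proof: the paper bounds $V_G(\lambda)$ between $\max_{i,j,S} V_{i,j|S}(\lambda)$ and $\sum_{i,j,S} V_{i,j|S}(\lambda)$, observes via Theorem~\ref{lemma_unfaithful} that both bounds are asymptotically a positive constant times $\lambda^{\ell}(-\ln\lambda)^{m-1}$ with $(\ell,m) = \min_\alpha \RLCT(i,j|S)$, and stops there.

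You are correct that this sandwich by itself only yields $V_G(\lambda) \asymp \lambda^{\ell}(-\ln\lambda)^{m-1}$, not the existence of a single limit constant $C$ in the sense of~(\ref{eq:curly_equal}); the paper does not close this gap in the proof of the corollary. Your inclusion--exclusion step and the transversality discussion go beyond what the paper argues here. The obstacle you name is real: distinct triples can produce correlation hypersurfaces sharing irreducible components (already for trees, the polynomials $\det(K_{iR,jR})$ are monomials in overlapping edge parameters, cf.~(\ref{eq:sosmono})), so pairwise intersections of tubes need not be $o(\lambda^\ell(-\ln\lambda)^{m-1})$ without further work. The paper pins down the exact constant only in specific later computations such as Example~\ref{ex:chainStarConstants}, not at the level of generality of this corollary.
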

\begin{proof}
The function $V_G(\lambda)$ is the volume of the union of the regions
${\rm Tube}_{i,j|S}(\lambda)$. Thus,
$$
\max_{i,j,S} V_{i,j|S}(\lambda) \,\leq\, V_G(\lambda) \,\leq\, \sum_{i,j,S} V_{i,j|S}(\lambda)
$$
Asymptotically, for small positive values of $\lambda$, both the lower and upper bounds
vary like a constant multiple of $\lambda^{\ell}(-\ln \lambda)^{m-1}$
where  $(\ell,m)$ is the minimum over all pairs $\RLCT(i,j|S)$. In this minimum,
 $(i,j,S)$ runs over all triples such that
  $i$ and $j$ are d-connected given $S$.
\end{proof}

\section{Singular Locus}
\label{sec:sing_locus}

The asymptotic integration theory in Section \ref{sec:real_log_threshold} 
requires us to analyze the singular locus ${\rm Sing}(f)$ of the
real algebraic hypersurface determined by a given polynomial $f$.
If ${\rm Sing}(f)$ is empty then the hypersurface is smooth and
Proposition \ref{thm:smoothFunctionRLCT}
characterizes the asymptotics of the integral.
In this section we return to Gaussian graphical models,
we develop tools for computing the relevant singular loci,
and we show that they are empty in many cases.
In many of the remaining cases, the singularities are
of the monomial type featured in Proposition~\ref{prop:RLCTMonomial}.

Consider any almost-principal minor
$\,f = {\rm det}(K_{iR, jR})\,$ of the concentration matrix
$K$ of a DAG $G$. This is a polynomial function on the parameter space
$\mathbb{R}^E$. This polynomial and its partial derivatives
are elements in the polynomial ring $\mathbb{Q}[a_{ij}: (i,j) \in E]$.
The {\em Jacobian ideal} of $f$ 
 is the ideal in this polynomial ring generated
 by $f$ and its partials. We denote it by
$$\, {\rm Jacob}_{i,j,R} \,\,\, := \,\,\,
\langle \,f \,\rangle \,+ \,
\bigl\langle \,\frac{\partial f }{\partial a_{ij}} \,: \,(i,j) \in E \,
\bigr\rangle . $$
The singular locus ${\rm Sing}(f)$ is the
subvariety of real affine space $\R^E$ 
defined by  the Jacobian ideal ${\rm Jacob}_{i,j,R}$.
The structure of the real variety ${\rm Sing}(f)$ governs
the volume $V_{i,j|S}(\lambda)$ of the set
${\rm Tube}_{i,j|S}(\lambda)$ of unfaithful parameters.
If ${\rm Sing}(f) = \emptyset$ then
Proposition \ref{thm:smoothFunctionRLCT} tells us that $V_{i,j|S}(\lambda)$
asymptotically equals $C\lambda$ for some constant $C >0$.
If the singular locus is not empty then understanding ${\rm Sing}(f)$
is essential for computing its real log
canonical threshold $(\ell,m)$.

We conducted a comprehensive study of all DAGs with few nodes
by computing the singular locus for every
almost-principal minor in their concentration matrix $K$.
Our first result concerns the special case of complete graphs.
Non-complete graphs will be studied~later.

\begin{thm} \label{thm:uptosix}
Suppose that $\,\varphi\,$ satisfies the assumptions in Proposition \ref{prop:phi}.
For any conditional independence statement 
on the complete directed graph $K_p$ with $p \leq 6$ nodes,
we have ${\rm Sing}(f) = \emptyset$, and hence
$V_{i,j|S}(\lambda) \approx C \lambda$ for all triples $(i,j,S)$.
\end{thm}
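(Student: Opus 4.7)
The plan is to reduce the statement to showing ${\rm Sing}(f) = \emptyset$ for every almost-principal minor $f = \det(K_{iR,jR})$ arising from a conditional independence triple in $K_p$, $p \le 6$. Once ${\rm Sing}(f) = \emptyset$ is established, Proposition \ref{thm:smoothFunctionRLCT} gives $\RLCT(i,j|S) = (1,1)$, and Theorem \ref{lemma_unfaithful} then yields the asymptotic $V_{i,j|S}(\lambda) \approx C\lambda$ for some $C > 0$.

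I would first cut down the case analysis using the symmetry of $K_p$. Swapping $i \leftrightarrow j$ changes $f$ only up to sign, and any permutation of $V$ that respects the topological order induces an obvious relabeling of the parameters that identifies one triple $(i,j,S)$ with another. This reduces the ideals $\,{\rm Jacob}_{i,j,R}\,$ that must be analyzed to a manageable finite list for each $p \in \{2,\ldots,6\}$.

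For each representative triple I compute $f$ from the formula $K = (A_G-I)(A_G-I)^T$, assemble the Jacobian ideal $\,{\rm Jacob}_{i,j,R}\,$, and verify emptiness of its real variety by a two-step computer-algebra protocol. First, I compute a Gr\"obner basis and test whether $1 \in {\rm Jacob}_{i,j,R}$; if so, then ${\rm Sing}(f) = \emptyset$ already over $\C$, and a fortiori over $\R$. Second, if the previous step fails, I compute the real radical $\sqrt[\R]{{\rm Jacob}_{i,j,R}}$ in the sense of \cite{Marshall}, as illustrated in Example \ref{ex:tripartite6}, or construct a Positivstellensatz certificate, to exhibit that no real tuple satisfies the Jacobian equations. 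A key structural aid is the path expansion of $f$ as a signed sum over d-connecting paths (Equation (\ref{eq:fsosf})): it pinpoints which parameters appear linearly in $f$, producing partial derivatives with low-degree or constant behavior that considerably accelerate the symbolic computation.

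The main obstacle is the size of the computations at $p = 6$: the ambient ring has $\binom{6}{2} = 15$ variables, $f$ can be a dense polynomial of sizable degree, and Gr\"obner bases in such settings are prone to blow up. Feasibility hinges on carefully chosen elimination orders tailored to each triple, aggressive preprocessing via the evident sparsity of $f$, and exploitation of the block structure of $K$ imposed by the topological order. A complete verification of the finite list of symmetry representatives (carried out with a system such as \texttt{Macaulay2} or \texttt{Singular}) then proves the theorem.
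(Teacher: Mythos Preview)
Your overall logical structure is sound: establish ${\rm Sing}(f)=\emptyset$ for each triple, then invoke Proposition~\ref{thm:smoothFunctionRLCT} and Theorem~\ref{lemma_unfaithful}. However, two points deserve attention.

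First, the symmetry reduction you propose is essentially vacuous for complete DAGs. The only permutation of $V=\{1,\ldots,p\}$ that respects the topological order of $K_p$ is the identity, so apart from the sign change under $i\leftrightarrow j$ you get no reduction in the number of triples. The paper in fact runs through all $240$ triples for $K_6$ exhaustively.

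Second, and more importantly, your two-step protocol will in practice always fall through to the hard step. As the paper remarks just before Lemma~\ref{lemma:eins}, the Jacobian ideal ${\rm Jacob}_{i,j,R}$ is \emph{almost never} the unit ideal: the correlation hypersurfaces do have singular points over~$\C$, just not over~$\R$. So your step~1 (test $1\in{\rm Jacob}_{i,j,R}$) fails, and you are committed to computing real radicals or Positivstellensatz certificates in up to $15$ variables for polynomials of degree up to~$6$. That is a formidable task without further structure. The paper's key idea, which you are missing, is Lemma~\ref{lemma:eins}: every principal minor $\det(K_{R,R})$ equals $1+{\rm SOS}$ and hence has no real zeros. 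One therefore forms the saturation
\[
{\rm Singu}_{i,j,R}\;=\;\bigl({\rm Jacob}_{i,j,R}:\det(K_{R,R})^\infty\bigr),
\]
which has the same real variety as ${\rm Jacob}_{i,j,R}$ but discards the complex components lying on $\{\det(K_{R,R})=0\}$. The computation then reduces to verifying that ${\rm Singu}_{i,j,R}=\langle 1\rangle$, a pure Gr\"obner-basis check that turns out to succeed in every case. This saturation trick is what makes the verification tractable; without it your fallback to general real-radical methods is correct in principle but unlikely to terminate in practice.
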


It is tempting to conjecture that the hypothesis $p \leq 6$ can be 
removed in this theorem. Presently we do not know how
to approach this problem other than by direct calculation.

Applying Corollary \ref{prop_unfaithful}, this means that the volume of $\lambda$-strong-unfaithful distributions for the complete graph satisfies
$V_{K_p}(\lambda) \, \approx\, C\,\lambda$ for $\lambda\to 0$, which is the best possible behavior regarding strong-faithfulness. This may be counter-intuitive, but is confirmed in simulations. In Figure \ref{fig:all_classes} we plot (via (\ref{funct_plots})) the proportion of strong-unfaithful distributions $V_G(\lambda)$ for the five graphs in Section \ref{sec:graphs} for varying values of $\lambda$. Especially in the plot for $p=10$ it becomes apparent that the behavior for $\lambda\to 0$ is very different than, say, for $\lambda=0.001$. For $\lambda\to 0$ we 
have $V_{\rm complete}(\lambda)<V_{\rm chain}(\lambda)$, although the chain-like graph is much sparser than the complete graph.
Note also that the complete graph $K_{10}$ has
 $\sum_{k=2}^{10}\binom{10}{k}\binom{k}{2}=11520$ relevant triples $(i,j,S)$,
  whereas for ${\rm Chain}_{10}$
 there are only $\sum_{k=1}^{9}k2^{k-1} =4097$ such triples.
 
% At first sight, Theorem~\ref{thm:uptosix}
% seems to contradict the result in \cite{URBY} that the volume of $\lambda$-strong-unfaithful distributions increases with graph density and is therefore maximal for complete graphs. This conflictive behavior can be explained by the different regimes for $\lambda$. In \cite{URBY} $\lambda$ was held fixed, whereas in this paper we study the asymptotic behavior of $V_G(\lambda)$ as $\lambda\to 0$.

\begin{figure}[t!]
\centering
\subfigure[$p=6$]{\includegraphics[scale=0.42]{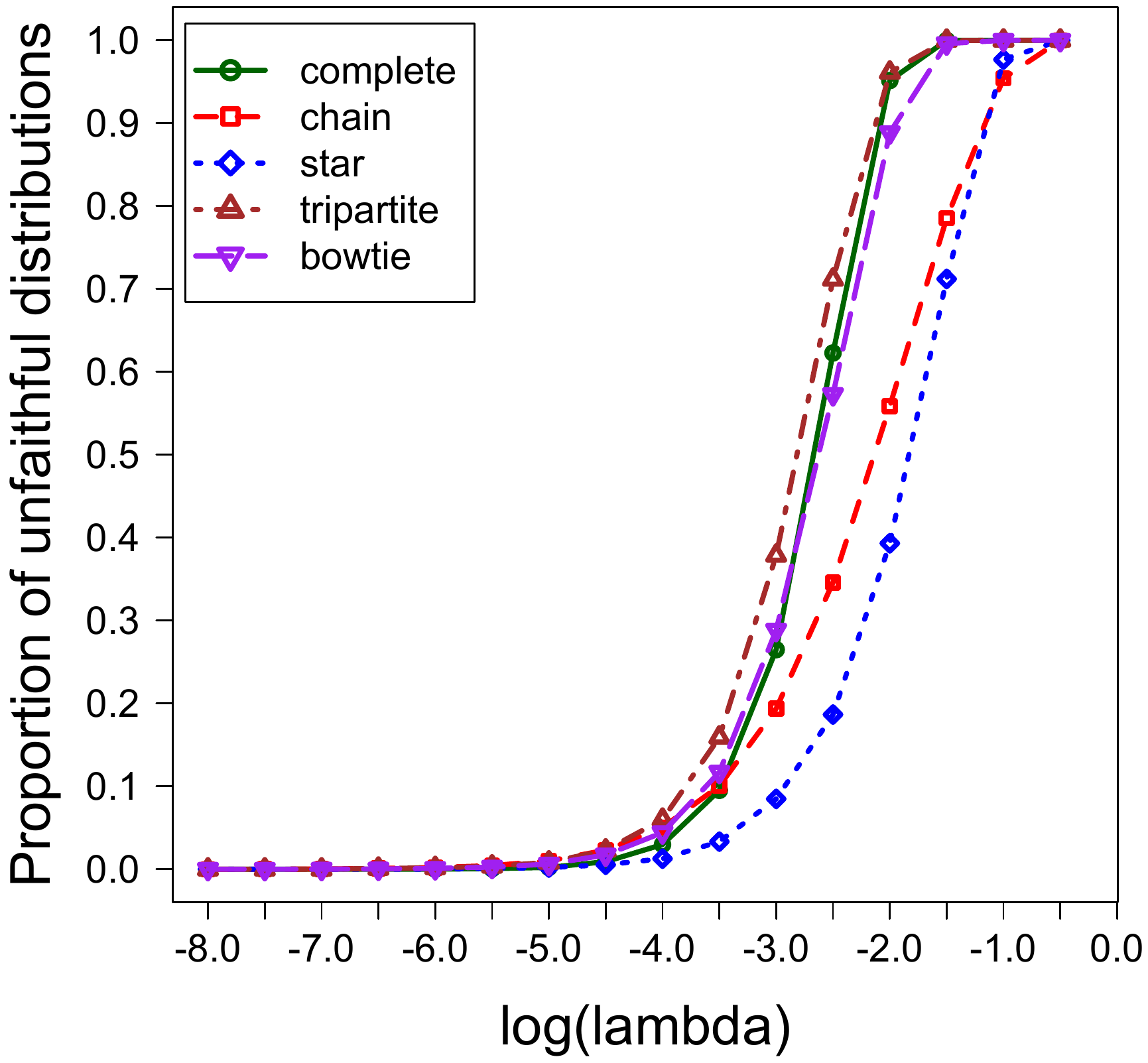}\label{Rplot_6}}\qquad
\subfigure[$p=10$]{\includegraphics[scale=0.42]{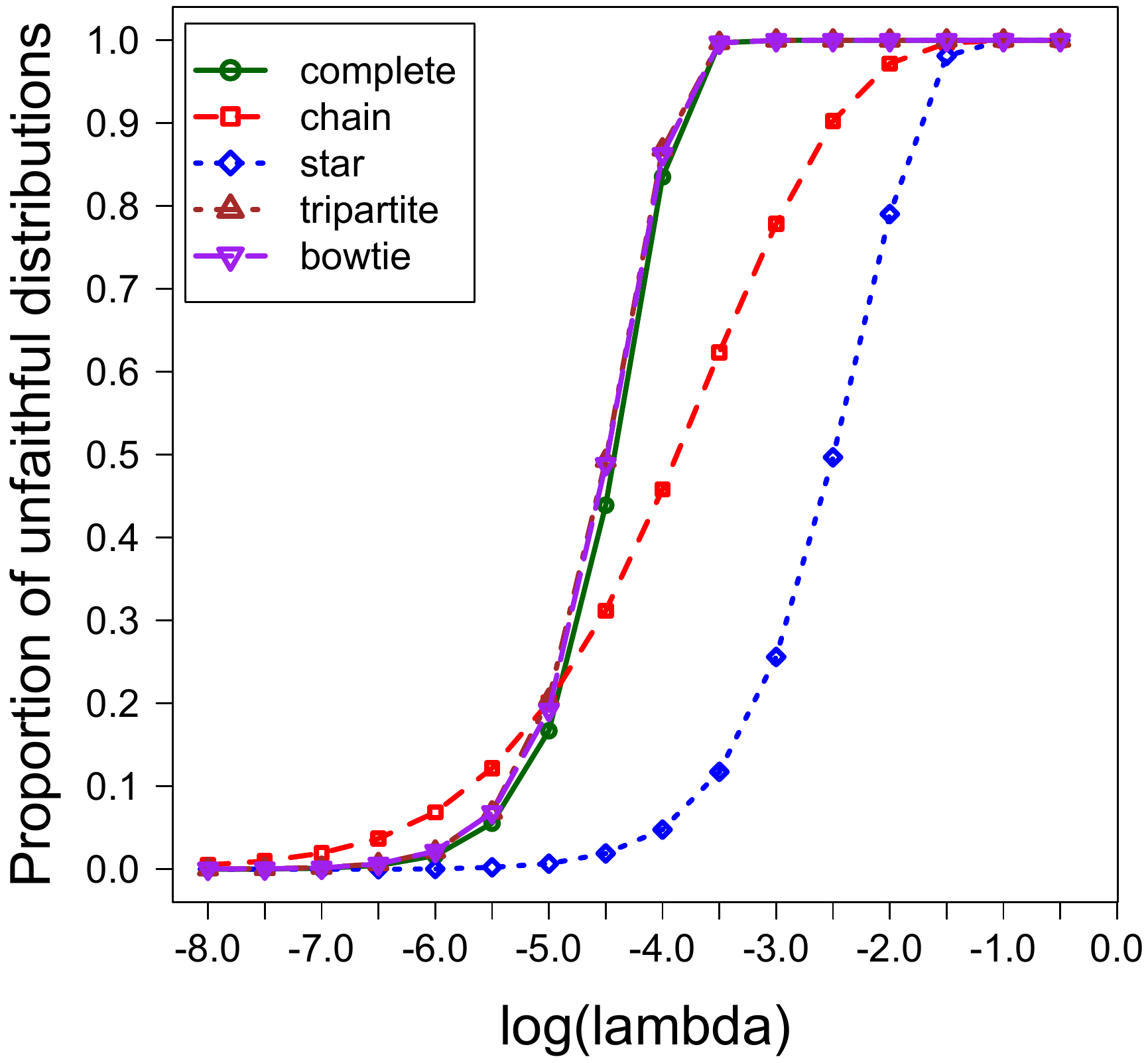}\label{Rplot_10}}
\caption{$V_G(\lambda)$ for the complete graph $K_p$ compared to $\textrm{Chain}_p$,  
$\textrm{Star}_p$, $\textrm{Tripart}_{p,2}$, $\textrm{Bow}_p$.}
\label{fig:all_classes}
\end{figure}

In what follows we explain the algebraic computations that 
led to Theorem \ref{thm:uptosix}. We used ideal-theoretic
methods from \cite{CLO} in their implementation in the
Gr\"obner-based software packages {\tt Macaulay 2} 
\cite{M2} and {\tt Singular} \cite{Singular}.
An important point to note at the outset is that the ideal
${\rm Jacob}_{i,j,R}$ is almost never the unit ideal.
By Hilbert's Nullstellensatz, this means that the
hypersurfaces $\,f = {\rm det}(K_{iR, jR}) \,$ do have
plenty of singular points over the field $\mathbb{C}$
of complex numbers. What Theorem 
\ref{thm:uptosix} asserts is that, in many of the cases of interest
to us here, none of those singular points have their
coordinates in the field $\mathbb{R}$ of real numbers.

In order to study the real variety of an ideal, techniques
from real algebraic geometry are needed. A key technique
is to identify sums of squares (SOS). Indeed, the {\em real
Nullstellensatz} \cite{Marshall} states that the real variety is empty if and only if
the given ideal contains a certain type of SOS.
To apply this to directed Gaussian graphical models, we shall
use the fact that every principal minor of
the covariance matrix or the concentration matrix furnishes such an SOS.

\begin{lem}
\label{lemma:eins}
Every principal minor ${\rm det}(K_{R,R})$
of the concentration matrix $K$ of a DAG 
is equal to $1$ plus a sum of squares in $\mathbb{Q}[a_{ij}: (i,j) \in E]$.
In particular, its real variety is empty.
\end{lem}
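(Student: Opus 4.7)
The plan is to exploit the factorization $K = (A_G - I)(A_G - I)^T$ and apply the Cauchy--Binet formula, so that every principal minor of $K$ becomes a sum of squares of polynomial minors of $A_G - I$, with one distinguished summand equal to $1$.

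First I would write $B := A_G - I$. Since $A_G$ is strictly upper triangular, $B$ is upper triangular with $-1$'s on the diagonal and with $a_{ij}$ in the off-diagonal slot $(i,j)$ for each edge $(i,j)\in E$ (and $0$'s elsewhere in the upper part). In particular, every entry of $B$ lies in $\mathbb{Q}[a_{ij}:(i,j)\in E]$. Next I would observe that $K_{R,R}$ is the Gram matrix of the rows of $B$ indexed by $R$; that is, $K_{R,R} = B_{R,\ast}\,(B_{R,\ast})^T$, where $B_{R,\ast}$ denotes the $|R|\times p$ submatrix of $B$ picking out the rows in $R$.

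Then I would apply the Cauchy--Binet formula to obtain
$$\det(K_{R,R}) \;=\; \sum_{\substack{S\subseteq\{1,\ldots,p\}\\ |S|=|R|}} \det(B_{R,S})^{2}.$$
The key point is to isolate the summand $S=R$. Because $B$ is upper triangular with $-1$'s on the diagonal, the principal submatrix $B_{R,R}$ is again upper triangular with $-1$'s on the diagonal, giving $\det(B_{R,R}) = (-1)^{|R|}$ and hence $\det(B_{R,R})^2 = 1$. All remaining summands are squares of polynomials in $\mathbb{Q}[a_{ij}:(i,j)\in E]$, so
$$\det(K_{R,R}) \;=\; 1 \;+\; \sum_{S\neq R}\det(B_{R,S})^{2},$$
which is the claimed representation as $1$ plus a sum of squares. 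The ``in particular'' conclusion is then immediate: the polynomial takes values $\geq 1$ on $\mathbb{R}^{|E|}$, so its real zero set is empty.

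The only ``obstacle'' is a bookkeeping check that the Cauchy--Binet expansion really applies to $B_{R,\ast}\cdot (B_{R,\ast})^T$ and that the $S=R$ term is the unique contributor of the constant, which follows from upper-triangularity of $B$; no clever idea beyond Cauchy--Binet is required.
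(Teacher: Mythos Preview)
Your proof is correct and follows essentially the same route as the paper: write $K_{R,R}=(A_G-I)_{R,*}\bigl((A_G-I)_{R,*}\bigr)^T$, apply Cauchy--Binet, and observe that the $S=R$ summand contributes~$1$. If anything, your version is slightly more precise, since the paper says ``one of these maximal minors is the identity matrix,'' whereas in fact $B_{R,R}$ is upper triangular with $-1$'s on the diagonal; you correctly compute $\det(B_{R,R})=(-1)^{|R|}$ and square it.
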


\begin{proof}
We can write the principal submatrix $K_{R,R}$ as the product
$\,(A-I)_{R,*} \cdot ((A-I)_{R,*})^T$,
where $( \,\,)_{R,*}$ refers to the submatrix with row indices
$R$. Thus  $K_{R,R}$ is the product of
an $|R| \times p$-matrix and its transpose. By the Cauchy-Binet formula,
${\rm det}(K_{R,R})$ equals the sum of squares of all maximal minors of the 
$|R| \times p$-matrix
 $\,(A-I)_{R,*} $. One of these maximal minors is the identity matrix. Hence the polynomial
${\rm det}(K_{R,R})$ has the form $1 + {\rm SOS}$. In particular, 
the matrix $K_{R,R}$ is invertible for all parameter values in   $\mathbb{R}^E$.
\end{proof}

We note that Lemma \ref{lemma:eins} holds more generally also in the case of unequal noise variances.
 In the context of commutative algebra, it now makes sense to introduce
the {\em saturations}
\begin{equation*}
 \begin{matrix}
{\rm Singu}_{i,j,R} & = & \bigr(\,{\rm Jacob}_{i,j,R} \,:\, {\rm det}(K_{R,R})^\infty \,\bigr) ,\quad  \\
{\rm Singu}_{i,j,R}^* & = & \,\bigr(\,{\rm Singu}_{i,j,R} \,:\, (\prod_{(i,j) \in E} a_{ij} )^\infty \,\bigr) .
\end{matrix}
\end{equation*}
These are also ideals in $\mathbb{Q}[a_{ij} : (i,j) \in E]$.
By definition,  ${\rm Singu}_{i,j,R} $ consists of all polynomials that get multiplied into
the Jacobian ideal by some power of the determinant of $K_{R,R}$,
and ${\rm Singu}_{i,j,R}^*$ consists of polynomials that get multiplied
into  ${\rm Singu}_{i,j,R} $ by some monomial.
By \cite[\S 4.4]{CLO}, the variety of ${\rm Singu}_{i,j,R}$ is the Zariski closure of
the set-theoretic difference of the variety of $\,{\rm Jacob}_{i,j,R} \,$
and the hypersurface  $\,\{{\rm det}(K_{R,R}) = 0 \}$. 
We saw in Lemma \ref{lemma:eins} that the latter hypersurface
has no real points.
The ideal ${\rm Singu}^*_{i,j,R}$ represents singularities in $(\R\backslash \{0\})^E$.

\begin{cor}
The singular locus of the real algebraic hypersurface 
 $\bigl\{{\rm det}(K_{iR,jR}) = 0 \bigr\} $ in $\mathbb{R}^E$ 
coincides with the set of real zeros of the ideal $\, {\rm Singu}_{i,j,R}$.
The set  of real zeros of $\, {\rm Singu}^*_{i,j,R}$ is the Zariski
closure of the subset of all singular points whose coordinates are non-zero.
\end{cor}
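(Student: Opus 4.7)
The plan is to deduce both assertions directly from the algebro-geometric interpretation of a saturation ideal, as recalled in \cite[\S 4.4]{CLO}, using Lemma~\ref{lemma:eins} to control the ``denominator'' ${\rm det}(K_{R,R})$.

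For the first assertion, the singular locus ${\rm Sing}(f)$ is by definition the real zero set of ${\rm Jacob}_{i,j,R}$, so it suffices to show that ${\rm Jacob}_{i,j,R}$ and ${\rm Singu}_{i,j,R}$ have the same real zero set. The inclusion ${\rm Jacob}_{i,j,R} \subseteq {\rm Singu}_{i,j,R}$ immediately yields one containment. Conversely, I would fix a real zero $\omega \in \R^E$ of ${\rm Jacob}_{i,j,R}$ and an arbitrary $h \in {\rm Singu}_{i,j,R}$: by the very definition of the saturation, some power ${\rm det}(K_{R,R})^N h$ lies in ${\rm Jacob}_{i,j,R}$, so ${\rm det}(K_{R,R})(\omega)^N \, h(\omega) = 0$. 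Lemma~\ref{lemma:eins} expresses ${\rm det}(K_{R,R})$ as $1$ plus a sum of squares, hence ${\rm det}(K_{R,R})(\omega) \geq 1 > 0$, forcing $h(\omega)=0$. Thus $\omega$ is also a real zero of ${\rm Singu}_{i,j,R}$.

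For the second assertion, I would apply the same saturation identity with $I = {\rm Singu}_{i,j,R}$ and $g = \prod_{(i,j)\in E} a_{ij}$: the complex variety of ${\rm Singu}^*_{i,j,R}$ is the Zariski closure of those complex zeros of ${\rm Singu}_{i,j,R}$ on which $g \neq 0$. Intersecting with $\R^E$ and invoking the first assertion identifies the real zero set of ${\rm Singu}^*_{i,j,R}$ with the Zariski closure of the set of singular points of $\{f=0\}$ all of whose coordinates are nonzero.

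The one point I would verify most carefully is the interchange of Zariski closure with passage to real points in this last step; this should be routine here because $V(g)$ is a union of coordinate hyperplanes cut out by single real linear forms and every ideal in sight is generated by real (in fact rational) polynomials. No further obstacle is anticipated: the corollary is in essence a short combination of the positivity assertion of Lemma~\ref{lemma:eins} with the standard set-theoretic interpretation of the saturation $(I : g^\infty)$.
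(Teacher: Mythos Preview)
Your argument for the first assertion is exactly what the paper has in mind: the corollary carries no separate proof, and the preceding paragraph simply combines the saturation interpretation from \cite[\S4.4]{CLO} with Lemma~\ref{lemma:eins}. Your direct verification that ${\rm det}(K_{R,R})(\omega)>0$ forces $h(\omega)=0$ for every $h$ in the saturation is precisely the content of those two references put together.

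For the second assertion you are right to flag the interchange of Zariski closure with restriction to real points, but your proposed justification is not sufficient. The fact that $g=\prod a_{ij}$ is a product of real linear forms and that all ideals are defined over $\mathbb{Q}$ does \emph{not} guarantee that the real Zariski closure of $V_{\mathbb{R}}(I)\setminus V_{\mathbb{R}}(g)$ equals $V_{\mathbb{R}}(I:g^{\infty})$. A minimal counterexample: take $I=\langle x^2+y^2\rangle\subset\mathbb{R}[x,y]$ and $g=x$; then $(I:g^{\infty})=I$ since $I$ is prime and $x\notin I$, so $V_{\mathbb{R}}(I:g^{\infty})=\{(0,0)\}$, whereas $V_{\mathbb{R}}(I)\setminus V_{\mathbb{R}}(g)=\emptyset$. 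Only the containment ``$\supseteq$'' follows unconditionally from the saturation definition. The paper does not close this gap either; its only justification is the informal remark that ${\rm Singu}^{*}_{i,j,R}$ ``represents singularities in $(\mathbb{R}\setminus\{0\})^E$,'' and the ideal ${\rm Singu}^{*}$ is never used in a way that depends on the reverse containment. So your write-up is at the same level of rigor as the paper's, but be aware that the step you labeled ``routine'' is in fact the one place where a general argument would fail.
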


\begin{proof}[Proof of Theorem \ref{thm:uptosix}]
We computed the ideals ${\rm Jacob}_{i,j,R}$
and ${\rm Singu}_{i,j,R}$ for every
almost-principal minor $K_{iR,jR}$ in the concentration matrices
of the graphs $G = K_3, K_4, K_5, K_6$.
In all cases the ideal ${\rm Singu}_{i,j,R}$
was found to equal the unit ideal $\langle \,1 \,\rangle$.
These exhaustive computations 
were carried out using 
the software {\tt Singular} \cite{Singular}.
This establishes Theorem~\ref{thm:uptosix}.
\end{proof}

We briefly discuss our computations for the complete directed graph on six nodes.

\begin{ex}
\label{ex:K6}
Fix the complete directed graph $G = K_6$.
We tested all 240 conditional independence statements and computed the corresponding ideal ${\rm Singu}_{i,j,R}$. We discuss one interesting instance, namely $i= 1, j = 3,  R  =\{2,4\}$.
The almost-principal minor $K_{241,243} = $
$$ \begin{pmatrix}
 a_{23}^2+a_{24}^2+a_{25}^2+a_{26}^2+1 &
 a_{25} a_{45} + a_{26} a_{46} - a_{24} & 
  a_{24} a_{34} {+} a_{25} a_{35} {+} a_{26} a_{36} {-} a_{23} \\
  a_{25} a_{45} + a_{26} a_{46} - a_{24} &
   a_{45}^2+a_{46}^2+1 &  a_{35} a_{45} + a_{36}  a_{46} - a_{34} \\
a_{13} a_{23} {+} a_{14} a_{24} {+} a_{15} a_{25} {+} a_{16} a_{26} {-} a_{12} &
 a_{15} a_{45} + a_{16} a_{46} - a_{14} &
  a_{14} a_{34} + a_{15} a_{35} + a_{16} a_{36} \end{pmatrix}
  $$
  contains all $15$ parameters except $a_{56}$.
  Its determinant is a polynomial of degree $6$.
  Of  its $14$ partial derivatives, $13$ have degree $5$.
  The derivative with respect to $a_{12}$ has
    degree $4$. Thus 
     ${\rm Jacob}_{1,3,\{2,4\}}$ is generated by
    $15$ polynomials of degrees $4,5,\ldots,5,6$.
  The matrix $K_{24,24}$ is the upper left
     $2 {\times} 2$-block in
  the matrix above. The square of its determinant
  is a polynomial of degree $8$ that happens to lie 
     in the ideal ${\rm Jacob}_{1,3,\{2,4\}}$.
     This proves ${\rm Singu}_{1,3,\{2,4\}} = \langle 1 \rangle$.
\qed     
\end{ex}

For graphs $G$ that are not complete, ${\rm Singu}_{i,j,R}$ may not be the unit ideal.
We already saw one non-obvious instance of this for
the tripartite graph in Example~\ref{ex:tripartite6}.
Here is an even smaller example where
the Jacobian ideal and its saturations are equal,
and not the unit ideal.

\begin{ex}
\label{ex:sec4graph1}
 Let $p = 4$ and take $G$ to be the almost-complete
graph with adjacency matrix
$$ A_G \quad = \quad
\begin{pmatrix}
 0 &  0 & a_{13} &  a_{14} \\
  0 & 0 & a_{23} &  a_{24} \\
   0 & 0 & 0  &  a_{34} \\
   0 & 0 & 0 & 0   
   \end{pmatrix}.
$$
The conditional independence statement $1 \independent 2 \given 4$
is represented by the almost-principal minor
$$ K_{31,32} \quad = \quad 
\begin{pmatrix}
   a_{34}^2+1  &  a_{24} a_{34}-a_{23} \\
    a_{14} a_{34} -a_{13} &  a_{13} a_{23} + a_{14} a_{24}
  \end{pmatrix}
 $$
 of the concentration matrix.
 The determinant of this minor
  factors into two binomial factors:
\begin{equation}
\label{eq:itfactors}
 {\rm det}(K_{31,32}) \quad = \quad 
(a_{13} a_{34} + a_{14}) (a_{23} a_{34} + a_{24} ).
\end{equation}
The Jacobian ideal is the prime ideal generated by these factors:
$$ {\rm Jacob}_{1,2,3} \,=\, {\rm Singu}_{1,2,3} \,\,=\,\,
{\rm Singu}_{1,2,3}^* \,\,=\,\,
\langle \,a_{13} a_{34} + a_{14} \,, \,a_{23} a_{34} + a_{24} \,\rangle . $$
The left equality holds because 
$\,{\rm det}(K_{3,3}) = a_{34}^2+1\,$ is a non-zerodivisor modulo
$ {\rm Jacob}_{1,2,3} $.
The singular locus of (\ref{eq:itfactors}) is the three-dimensional real variety
   defined by this binomial ideal
 in the parameter space $\mathbb{R}^5$.
Its real log canonical threshold is found to be $(\ell,m) = (1,2)$. \qed
 \end{ex}
 
 \begin{table}[b]
\caption{RLCT for all DAGs with three nodes}
\begin{center}
\begin{tabular}{lccc}
\hline
             & $(1,1)$ & $(1,2)$ & {\em Subtotal}\\
\hline
Monomial &21& 3& {\em 24} \\
Smooth & 3& &  {\em 3} \\
\hline
 {\em Subtotal} & {\em 24} & {\em 3}& {\em 27}\\
\hline
\end{tabular}
\label{table1}
\end{center}
\end{table}

\begin{table}[!t]
\caption{RLCT for all DAGs with four nodes}
\begin{center}
\begin{tabular}{lccccc}
\hline
              & $(1,1)$ & $(1,2)$ & $(1,3)$ & $(1/2,1)$ &{\em Subtotal}\\
\hline
Monomial &568 & 145& 14& 1& {\em  728} \\
Smooth & 198& & & & {\em  198} \\
Normal crossing& & 22 & 2 & & {\em  24}\\
Blowup &12 & & & & {\em  12} \\
Special &2 & 1& & & {\em  3}\\
\hline
{\em  Subtotal} & {\em  780} & {\em  168} & {\em  16} & {\em  1} & {\em  965}\\
 \hline
\end{tabular}
\label{table2}
\end{center}
\end{table}

This example inspired us to analyze the partial correlations of all small DAGs with 
$p \leq 4$ nodes. In our
experiments, we found that $\det(K_{iR,jR}) $ is frequently
 the product of a monomial with a strictly positive sum of squares. This is the case when there is a unique path which d-connects nodes $i$ and $j$ given $S$.
 For instance, this holds
     for trees.
   Such cases are denoted as ``Monomial'' in Tables \ref{table1} and \ref{table2}. For these,
 the RLCT is read off directly from Proposition
\ref{prop:RLCTMonomial}. 
The rows labeled ``Smooth'' cover cases that are not monomial but where ${\rm
  Singu}_{i,j,R}$ is the unit ideal, so
  Proposition \ref{thm:smoothFunctionRLCT} gives us the RLCT.
  The next theorem summarizes the complete results.
    The trivial case $p=2$ is excluded because
there is only one graph $1\rightarrow 2$, with $\RLCT(1,2|\emptyset) = (1,1)$. 
Here and in Tables 1 and 2 we enumerate unlabeled DAGs.

\begin{thm} \label{thm:smallDAGs}
 Under the assumptions in Theorem \ref{lemma_unfaithful}, for all  DAGs with $p \leq 4$ nodes
 and all triples $(i,j,S)$,
 the value $\RLCT(i,j|S)$ is
 given in Tables \ref{table1} and \ref{table2}.
 In all cases but one, we have
   $\RLCT(i,j|S)=(1,m)$ where $m < p$.
\end{thm}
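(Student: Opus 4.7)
The proof is an exhaustive computational classification, so the plan is to treat it as a census: enumerate, classify, and apply the appropriate RLCT formula in each class. First I would enumerate all unlabeled DAGs on $p \leq 4$ nodes (there are $27$ with three nodes and, with care about isomorphism types that respect the topological ordering on vertices, $965$ with four nodes, exactly matching the subtotal rows of Tables~\ref{table1} and~\ref{table2}). For each such DAG $G$ and each triple $(i,j,S)$ with $i,j$ d-connected given $S$, I would compute the almost-principal minor $f = \det(K_{iR,jR})$, its Jacobian ideal ${\rm Jacob}_{i,j,R}$, and the saturations ${\rm Singu}_{i,j,R}$ and ${\rm Singu}_{i,j,R}^*$ in \texttt{Singular}, exactly as in the proof of Theorem~\ref{thm:uptosix}.

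Next I would partition the triples $(G,i,j,S)$ by the structural shape of $f$ revealed by these ideals. The \emph{Monomial} class consists of those $f$ which, on the cube $\Omega = [-1,1]^{|E|}$, factor as $c\cdot \omega^\kappa\cdot g(\omega)$ with $g$ strictly positive; this always happens when there is a unique d-connecting path, and in particular for all trees, and the RLCT is read off from Proposition~\ref{prop:RLCTMonomial}. The \emph{Smooth} class is characterized by ${\rm Singu}_{i,j,R} = \langle 1\rangle$ (as in Theorem~\ref{thm:uptosix} and Example~\ref{ex:K6}), where Proposition~\ref{thm:smoothFunctionRLCT} gives $(1,1)$ immediately. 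The \emph{Normal crossing} class consists of the triples whose polynomial, like $\det(K_{31,32}) = (a_{13}a_{34}+a_{14})(a_{23}a_{34}+a_{24})$ in Example~\ref{ex:sec4graph1}, becomes a monomial after an obvious invertible linear change of coordinates; the RLCT is then given again by Proposition~\ref{prop:RLCTMonomial} applied in the new coordinates (this is the prototype of the general local-monomialization result Theorem~\ref{thm:RLCTMonomial2} referred to in Section~\ref{sec:resolution}).

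The harder classes are \emph{Blowup} and \emph{Special}, which account for only $12+3$ triples. For these, ${\rm Singu}_{i,j,R}$ is a proper nonunit ideal cutting out a nontrivial real singular locus, and $f$ cannot be put into monomial form by a linear change of variables. Here I would carry out an explicit resolution of singularities, as previewed in Section~\ref{sec:resolution}: blow up along the real components of ${\rm Singu}_{i,j,R}$ (or ${\rm Singu}_{i,j,R}^*$) identified by the primary decomposition from \texttt{Singular}, and on each affine chart of the blowup check that $f$ pulls back to a monomial times a unit. Then Proposition~\ref{prop:RLCTMonomial} on each chart, combined with the change-of-variables rule for the RLCT under a proper birational map, yields $(\ell,m)$. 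The main obstacle is this last stratum: one has to verify by hand that a single blowup (or a short sequence of them) suffices in every one of the $15$ cases, and one must track how the Jacobian of the blowup map interacts with the monomial exponents to produce the pairs $(1,1)$, $(1,2)$, $(1,3)$, and in the unique exceptional case $(1/2,1)$.

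Finally, I would aggregate the counts by RLCT value, filling in Tables~\ref{table1} and~\ref{table2}, and read off the last sentence of the theorem: across all $27 + 965$ triples, every pair is of the form $(1,m)$ with $m \in \{1,2,3\}$ and $m < p$, with the sole exception of the one triple on four nodes producing $(1/2,1)$. The $p=2$ case is trivial as noted and is excluded from the tables, so this completes the classification.
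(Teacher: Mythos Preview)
Your overall strategy matches the paper's proof almost exactly: enumerate, compute the ideals, sort into the five classes Monomial/Smooth/Normal crossing/Blowup/Special, and apply the appropriate RLCT tool in each class. Two small corrections: the totals $27$ and $965$ in Tables~\ref{table1} and~\ref{table2} count \emph{triples} $(G,i,j,S)$ over all unlabeled DAGs, not DAGs themselves; and Example~\ref{ex:sec4graph1} is in the paper's \emph{Special} row (the one $(1,2)$ entry there), not the prototype for \emph{Normal crossing}. The Normal crossing row is handled directly by Theorem~\ref{thm:RLCTMonomial2}, and the Blowup row by the method of Example~\ref{ex:4charts}, just as you describe.

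The one genuine gap is in your treatment of the two Special cases with $\RLCT=(1,1)$. These are Example~\ref{ex:specialcases} and its label-swap, where $f=a_{13}\cdot g$ with $g=a_{23}a_{24}a_{34}+a_{24}^2+1$. The real hypersurface $\{f=0\}\subset\R^4$ is singular along $\{a_{13}=0\}\cap\{g=0\}$, so your proposed blowup along the real singular locus would detect this and, absent further checks, you would compute $(1,2)$ rather than $(1,1)$. The actual argument is not resolution but an $\Omega$-specific positivity check: one shows $\{g=0\}$ misses the cube $[-1,1]^4$ entirely, so over $\Omega$ the correlation hypersurface reduces to $\{a_{13}=0\}$ and Proposition~\ref{prop:RLCTMonomial} gives $(1,1)$. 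Your plan needs an explicit step, for each non-monomial factor, to test whether that factor has zeros in $\Omega$; resolving in $\R^{|E|}$ alone does not see this.
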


To establish Theorem \ref{thm:smallDAGs}, we listed every DAG $G$ and every
triple $(i,j,S)$ that is not d-separated in $G$.
The rows ``Monomial'' and ``Smooth'' were discussed above. On three nodes there are only 3 partial correlations that correspond to the weighted sum of more than one d-connecting path, namely the partial correlations $\textrm{corr}(1,2\mid 3)$, $\textrm{corr}(1,3)$, $\textrm{corr}(2,3)$ in the complete DAG $1\to 2$, $2\to 3$, $1\to 3$. These are the 3 cases of smooth RLCTs in Table~1. The row ``Normal crossing'' refers to cases  covered by
Theorem \ref{thm:RLCTMonomial2}. The ``Special'' cases are treated in Examples
\ref{ex:sec4graph1} and \ref{ex:specialcases}. Lastly, the row
``Blowup'' represents instances where the real singular locus is a
linear space. Our computation of  ${\rm RLCT}(i,j|S) = (\ell,m)$ for such instances uses the method
 in Example \ref{ex:4charts}.
We now examine the unique exceptional case where $\ell \not= 1$.

\begin{ex}
\label{ex:sec4graph2}
 Let $p = 4$ and $G = {\rm Tripart}_{4,1}$. Its concentration matrix
 may be obtained from Example \ref{ex:sec4graph1}
  by setting $a_{14} = a_{24} = 0$.
  The partial correlation
 for $1 \independent 2 \given 4$ is now given by 
  $${\rm det}( K_{13,23})\, =\, a_{13} a_{23} a_{34}^2 . $$
For this monomial,  Proposition  \ref{prop:RLCTMonomial} tells us that
$\,(\ell,m) = {\rm RLCT}(1,2|4) \,= \,(1/2,1)$.
 \qed
 \end{ex}
 
Here is an interesting case where  the RLCT
depends in a subtle way on the choice of $\Omega$.

\begin{ex}\label{ex:specialcases}
Consider the conditional independence statement $\,1 \independent 3 \given 4\,$ 
for the DAG in Figure \ref{fig:4node}. The partial correlation is represented by the almost-principal minor
$$ \det(K_{12,23}) = a_{13} \cdot g \qquad
\hbox{where} \quad g \,=\,a_{23}a_{24}a_{34}+a_{24}^2+1.
$$
The component $\{g=0\}$ is smooth in $\R^4$. However,
it is disjoint from the cube $\Omega = [-1,1]^4$.
To see this, note that $-1 \leq a_{23}a_{24}a_{34}$ in $\Omega$.
With this,  $g=0$ would imply
$a_{24} = 0$ and hence $g=1$, a contradiction. Consequently, if $\Omega$
is the cube $[-1,1]^4$ then
the correlation hypersurface is simply $\{a_{13} =0\}$,
and the RLCT equals $(1,1)$ by Proposition \ref{prop:RLCTMonomial}.
The other special case with ${\rm RLCT} = (1,1)$ in
Table 2  comes from swapping the labels of nodes $1$ and~$2$.

Now, if we enlarge the parameter space $\Omega$ then the situation
changes. For instance, suppose $(a_{13},a_{23},a_{24},a_{34}) = (0,-2,1,1)$
is in the interior of $\Omega$. This is a singular point of
$\det(K_{12,23}) = a_{13} \cdot g$. The RLCT can be 
computed by applying Theorem \ref{thm:RLCTMonomial2}. It is now
 $(1,2)$ instead of
$(1,1)$. This example shows that the asymptotics of  ${\rm
  V}_{i,j|S}(\lambda)$ depends on $\Omega$. However, it is
  possible to choose $\Omega$ in such a way that further enlargement
  will not cause the asymptotics of $V_G(\lambda)$ to change. Such a choice could be used as a worst-case analysis for
  $V_G(\lambda)$, but to avoid complicating the
  paper, we will not explore this any further. \qed
\end{ex}

\begin{figure}[t!]
\centering
\includegraphics[scale=0.28]{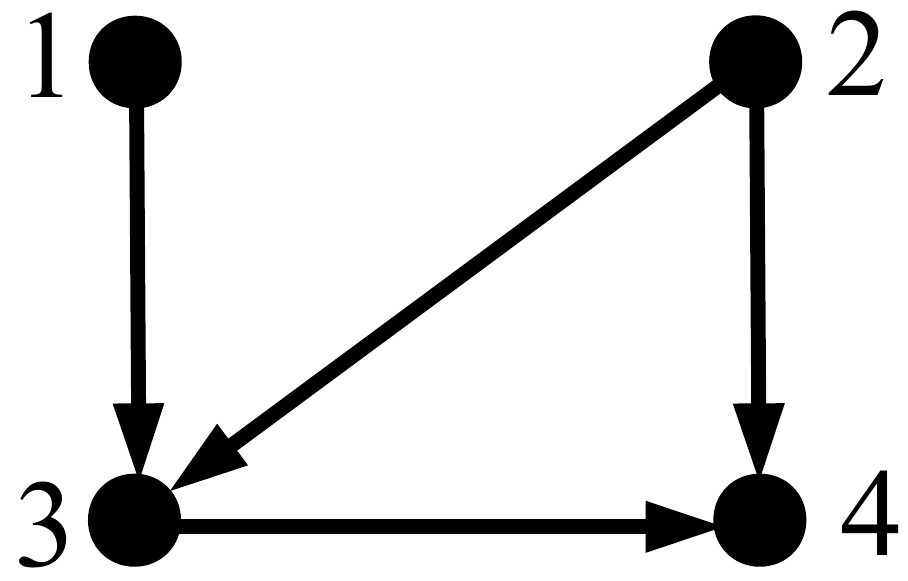}
\caption{4-node DAG.}
\label{fig:4node}
\end{figure}

\begin{rem}
We briefly return to the issue of faithfulness in the PC-algorithm.
Zhang and Spirtes \cite{ZhangSpirtes08} introduced a variant known as
 the \emph{conservative PC-algorithm}. As the name suggests, this algorithm is more conservative and may decide not to orient certain edges. The conservative PC-algorithm only requires \emph{adjacency-faithfulness} for correct inference, which is simply strong-faithfulness  restricted to the edges of~$G$:
 $$  \qquad
|\corr(i,j|S)| > \lambda  \qquad
\hbox{for all $(i,j) \in E$ and
$S\subset V \backslash \{i,j\}$ }.
$$
If $\{i,j\}$ is not adjacent to $R$ then the relevant minor equals
 $\,\det(K_{iR,jR} ) =  a_{ij} {\rm det}(K_{R,R})
 +f(\bar{a})$, where $f$ is a polynomial in $\bar{a}=\{a_{st}\mid (s,t)\neq(i,j)\}$,
  the correlation hypersurface is smooth, and $(\ell, m ) = (1,1)$.
If $\{i,j\}$ is adjacent to $R$ then the behavior can be more complicated, as seen in
Example~\ref{ex:specialcases}.
\end{rem}

\section{Asymptotics for trees}
\label{sec:trees}

In \cite{URBY} trees were treated as one class.  However, as  noted when discussing Figure \ref{fig:graphs_lambda}, there is a striking difference between the volume $V_G(\lambda)$ for chain-like graphs compared to stars. In this section we give an explanation for this difference based on real log canonical thresholds.
  
We use the notation $SOS(a)$ for  any polynomial that is a sum of squares of polynomials
 in the model parameters $(a_{ij})_{(i,j)\in E}$. 
 Suppose that $G$ is a tree on $V = \{1,2,\ldots,p\}$
 and let $m$ be the longest length of an undirected path in $G$.
 It was shown in  \cite[Corollary 4.3(a)]{URBY} that
 any non-zero almost-principal minor of the concentration 
 matrix $K$ has the form
 \begin{equation}
 \label{eq:sosmono}
  \det(K_{iR,jR}) \, =\, (1+SOS(a)) \cdot a_{i \rightarrow j} , 
  \end{equation}
 where $a_{i \rightarrow j}$ is the monomial
 of degree $\leq m$ formed by multiplying the
 parameters $a_{rs}$ along the unique path between $i$ and $j$.
 Specifically, for the  two trees in Figure \ref{fig:graphs} we have
 $$
  \det(K_{iR,jR}) \, =\,  \begin{cases}\,
  (1+SOS(a))\prod_{k=i}^{j-1}a_{k,k+1} & {\rm if} \,\,\,
   G = \textrm{Chain}_p,
  \\ \, (1+SOS(a)) \cdot a_{1,i}a_{1,j} & {\rm if} \,\,\, G = \textrm{Star}_p, \textrm{ and }i,j> 1 .\\
  \end{cases}
$$
In both cases, the term $SOS(a)$ disappears when $i$ and $j$
are leaves of the tree $G$; cf.~(\ref{eq:chaincor}),(\ref{eq:starcor}).

Since the  correlation hypersurfaces for trees
are essentially given by monomials, we can apply Proposition \ref{prop:RLCTMonomial}.
The minimal real log canonical threshold is $(1,m)$
where $m$ is the largest degree of any of the monomials in 
(\ref{eq:sosmono}).
Corollary \ref{prop_unfaithful} implies the following result:

\begin{thm} 
\label{thm:treeV}
Under the assumptions in Theorem \ref{lemma_unfaithful}, if $G$ is a tree then
the volume of $\lambda$-strong-unfaithful distributions satisfies
$$
V_G(\lambda) \,\approx\, C \lambda (- \ln \lambda)^{m-1}
$$
where $m$ is the length of the longest path in the tree $G$,
and $C$ is a suitable constant. 
\end{thm}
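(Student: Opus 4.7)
The plan is to combine three ingredients already available in the paper: the path-monomial factorization (\ref{eq:sosmono}) for almost-principal minors of the concentration matrix of a tree, Proposition \ref{prop:RLCTMonomial} for the RLCT of a monomial, and Corollary \ref{prop_unfaithful} for the volume of the unfaithful region. First, I fix any triple $(i,j,S)$ that is not d-separated in $G$, and let $P_{ij}$ be the unique undirected path from $i$ to $j$ in the tree, of length $k$. By (\ref{eq:sosmono}),
$$\det(K_{iR, jR}) \,=\, (1 + SOS(a)) \cdot a_{i \to j},$$
where $a_{i \to j}$ is the squarefree product of the edge parameters along $P_{ij}$.

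Next, the factor $(1 + SOS(a))$ is bounded below by $1$ on $\Omega = [-1,1]^{|E|}$, hence it is a unit in the ring of real analytic functions on $\Omega$. Since multiplying by a unit does not change the RLCT (a fact used in the proof of Theorem \ref{lemma_unfaithful}), I may replace $\det(K_{iR,jR})$ by $a_{i \to j}$ in the RLCT computation. The monomial $a_{i \to j}$ has the form $\prod_e a_e^{\kappa_e}$ with $\kappa_e = 1$ on the $k$ edges of $P_{ij}$ and $\kappa_e = 0$ elsewhere, so Proposition \ref{prop:RLCTMonomial} yields $\RLCT(i,j|S) = (1, k)$: the exponent $\ell = \min_e 1/\kappa_e$ equals $1$ (taken over those edges with positive $\kappa_e$), and its multiplicity counts the edges on which this minimum is attained, namely $k$.

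Finally, I apply Corollary \ref{prop_unfaithful}, whose minimum is taken over all triples $(i,j,S)$ for which $i$ and $j$ are d-connected given $S$, in the ordering of Definition \ref{thm:DefRLCT} (smaller $\ell$ first, then larger $m$). Since every such RLCT has the form $(1, |P_{ij}|)$, the minimum fixes $\ell = 1$ and selects the largest possible $|P_{ij}|$. Because the paper's trees have all edges pointing away from the root, every node on any undirected path is a non-collider; in particular, for any pair $(i,j)$ with path $P_{ij}$, choosing $S$ disjoint from the internal nodes of $P_{ij}$ (for instance $S = \emptyset$) makes the triple non-d-separated. Thus the maximum of $|P_{ij}|$ is realized and equals $m$, the length of the longest path in $G$, giving the asserted asymptotics $V_G(\lambda) \approx C \lambda (-\ln \lambda)^{m-1}$.

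The only delicate step, and the one I would flag as the main obstacle, is the ``realizability'' at the end: one must confirm that the longest undirected path actually corresponds to some d-connecting triple $(i,j,S)$, for otherwise the minimum in Corollary \ref{prop_unfaithful} could fall short of the longest-path value. Under the paper's convention that all tree edges point away from the root this is immediate because no colliders exist, but if tree skeletons were allowed with arbitrary edge orientations, the statement would have to be recast in terms of the longest \emph{d-connectable} path rather than the longest path in the skeleton.
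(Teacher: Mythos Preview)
Your proof is correct and follows essentially the same route as the paper: the factorization (\ref{eq:sosmono}) reduces each $\det(K_{iR,jR})$ to a monomial times a unit, Proposition~\ref{prop:RLCTMonomial} gives $\RLCT(i,j|S)=(1,|P_{ij}|)$, and Corollary~\ref{prop_unfaithful} then picks out the longest path. Your explicit justification of the unit step and of the realizability of the longest path by a d-connecting triple is more careful than the paper's one-line sketch, but the underlying argument is identical.
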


For the case of stars we have $m = 2$,
whereas for chain-like graphs we have $m  = p-1$.

\begin{cor}
\label{for:treecor}
Under the assumptions in Theorem \ref{thm:treeV}, the volume $V_G(\lambda)$ of strong-unfaithful distributions satisfies
\begin{equation}
\label{eq:treeV}
V_G(\lambda) \,\approx\,  
\begin{cases}
\,\,C_{\rm chain} \cdot \lambda (-\ln \lambda)^{p-2} & \textrm{if $\,G = \textrm{Chain}_p$,}\\
\,\, C_{\rm star} \cdot \lambda (-\ln \lambda) & \textrm{if $\, G = \textrm{Star}_p$.}
\end{cases}
\end{equation}
where $C_{\rm chain}$ and $C_{\rm star}$ are suitable positive constants.
\end{cor}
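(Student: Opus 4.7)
The plan is to derive both asymptotics as direct specializations of Theorem \ref{thm:treeV} once we identify the parameter $m$ appearing there, namely the length of the longest undirected path in the tree $G$. In each case the constant provided by Theorem \ref{thm:treeV} is strictly positive, so the resulting $C_{\rm chain}$ and $C_{\rm star}$ meet the claim.

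First I would treat $G = \textrm{Chain}_p$. The skeleton of this DAG is the path graph $1 - 2 - \cdots - p$, so the longest undirected path is the one spanning all $p$ vertices, with exactly $p-1$ edges. Accordingly, when $(i,j) = (1,p)$ and $S = \emptyset$, formula (\ref{eq:sosmono}) gives $\det(K_{iR,jR}) = (1 + SOS(a))\prod_{k=1}^{p-1} a_{k,k+1}$, a monomial (times a nowhere-vanishing factor) of degree $p-1$; no triple produces a larger degree. Hence $m = p-1$ in Theorem \ref{thm:treeV}, which yields $V_G(\lambda) \approx C_{\rm chain} \cdot \lambda (-\ln\lambda)^{p-2}$.

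Next I would treat $G = \textrm{Star}_p$. The skeleton consists of a central vertex joined to $p-1$ leaves, so every undirected path between two distinct leaves passes through the center and has length exactly $2$, while paths from the center to a leaf have length $1$. Taking $i,j > 1$ as two leaves and $S$ not d-separating them, formula (\ref{eq:sosmono}) gives $\det(K_{iR,jR}) = (1+SOS(a)) \cdot a_{1,i} a_{1,j}$, a monomial of degree $2$, and no triple yields a larger degree. Thus $m = 2$ and Theorem \ref{thm:treeV} delivers $V_G(\lambda) \approx C_{\rm star} \cdot \lambda (-\ln\lambda)$.

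There is essentially no obstacle beyond verifying that the maximal monomial degree occurring in (\ref{eq:sosmono}) is attained by some triple $(i,j,S)$ with $i,j$ not d-separated given $S$; this is immediate in both cases by choosing $i$ and $j$ to be leaves realizing the diameter and choosing $S$ appropriately (e.g., $S = \emptyset$). If one preferred a first-principles derivation without quoting Theorem \ref{thm:treeV}, one would combine (\ref{eq:sosmono}) with Proposition \ref{prop:RLCTMonomial} to obtain $\RLCT(i,j|S) = (1, \deg(a_{i \to j}))$ for every relevant triple, and then apply Corollary \ref{prop_unfaithful}; the minimum in our reverse ordering is attained precisely at the triple maximizing the path length, reproducing the two stated formulas.
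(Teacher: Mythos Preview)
Your proposal is correct and follows exactly the paper's approach: the paper derives the corollary in one line by noting that $m = p-1$ for $\mathrm{Chain}_p$ and $m = 2$ for $\mathrm{Star}_p$, and then invoking Theorem~\ref{thm:treeV}. Your additional remarks about which triples realize the maximal path length and the alternative route via Proposition~\ref{prop:RLCTMonomial} and Corollary~\ref{prop_unfaithful} are sound but not needed beyond what the paper does.
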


As a consequence, the volume $V_G(\lambda)$
  is asymptotically larger for chains compared to stars,
   and the difference increases with increasing number of nodes $p$. This furnishes an explanation for 
Figure \ref{fig:graphs_lambda}, at least for small 
   values of $\lambda$. In that figure we saw the curve for the chain
lying clearly above the curve for the star tree.
However, one subtle issue is the size of the constants
$C_{\rm chain}$ and $C_{\rm star}$. These need to be understood
in order to make accurate comparisons.

In Section \ref{sec:integrals}, we develop new theoretical results regarding the computation 
of the constant $C$ in (\ref{eq:firstorder}).
Theorem~\ref{thm_constants} gives an
  integral representation for $C$ when the partial correlation hypersurface is essentially defined by a monomial.
In Example \ref{ex:chainStarConstants} we shall then derive:

\begin{cor} 
\label{cor__const_trees}
The two constants in (\ref{eq:treeV}) are 
$$
C_{\rm chain} \, = \, \frac{1}{(p-2)!} 
\quad \hbox{and} \quad 
C_{\rm star} \, = \,\binom{p}{3}. 
$$
\end{cor}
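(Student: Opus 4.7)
The plan is to apply Theorem~\ref{thm_constants} to each triple $(i,j,S)$ attaining the minimum $\RLCT$ in Corollary~\ref{prop_unfaithful}, then sum the resulting constants while verifying that overlaps between tubes are of strictly lower order. For both families, the monomial form~(\ref{eq:sosmono}) of $\det(K_{iR, jR})$, combined with explicit Cauchy--Binet expansions of $\det(K_{iR, iR})\det(K_{jR, jR})$, reduces the task to evaluating a monomial volume integral.

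For $G = \textrm{Chain}_p$ there is a unique triple attaining $\RLCT = (1, p-1)$, namely $(1, p, \emptyset)$ with $R = \{2, \ldots, p-1\}$, since this is the only d-connecting configuration whose unique path has length $p-1$. Because $1$ and $p$ are both leaves, $\det(K_{1R, pR})$ equals the pure monomial $a_{12} a_{23} \cdots a_{p-1, p}$ with no SOS correction. A Cauchy--Binet expansion of $(A-I)_{\{2, \ldots, p\},*}$ shows that $\det(K_{pR, pR}) \equiv 1$: removing any column other than column~$1$ leaves the first column identically zero, and the sole surviving minor is an upper triangular block of determinant $\pm 1$. An analogous check gives $\det(K_{1R, 1R})|_{a=0} = 1$. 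Hence the partial correlation is a pure monomial times a unit factor equal to $1$ at the origin, and the leading constant reduces to computing $\vol\{|a_1 \cdots a_{p-1}| \leq \lambda\}$ on $[-1, 1]^{p-1}$ under the uniform prior. The substitution $u_i = -\ln|a_i|$ converts this to the tail of a sum of $p-1$ independent unit exponentials, yielding the asymptotic $\lambda(-\ln\lambda)^{p-2}/(p-2)!$ and thus $C_{\mathrm{chain}} = 1/(p-2)!$.

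For $G = \textrm{Star}_p$, every triple $(i, j, S)$ with $i, j \in \{2, \ldots, p\}$ and $1 \notin S$ attains $\RLCT = (1, 2)$, and $\det(K_{iR, jR}) = a_{1i} a_{1j}$ exactly. A Cauchy--Binet calculation (only the column choices containing all of $iR \setminus \{1\}$ contribute) yields $\det(K_{iR, iR}) = 1 + a_{1j}^2 + \sum_{k \in S} a_{1k}^2$ and symmetrically for $j$, so the tubes for a fixed leaf pair $\{i, j\}$ are nested in $S$; the maximal tube corresponds to $S^{*} = \{2, \ldots, p\} \setminus \{i, j\}$, i.e.~$R = \{1\}$. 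Along the singular locus $a_{1i} = a_{1j} = 0$, the denominator of $|\corr(i, j \,|\, S^{*})|$ equals $1 + Q$ with $Q = \sum_{k \neq 1, i, j} a_{1k}^2$, so the leading volume of the maximal tube follows from the two-dimensional formula of Example~\ref{ex_tubes}(b) with effective parameter $\mu = \lambda(1 + Q)$, integrated over the remaining $p - 3$ leaf parameters against the uniform prior on $[-1, 1]$. Since $\mathbb{E}[a_{1k}^2] = 1/3$, this transverse integral evaluates to $1 + (p-3)/3 = p/3$, so each leaf pair contributes $(p/3)\,\lambda(-\ln\lambda)$.

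The main obstacle is showing that pairwise and higher intersections of the maximal tubes across distinct leaf pairs contribute only at order $o(\lambda(-\ln\lambda))$, so that inclusion--exclusion in Corollary~\ref{prop_unfaithful} collapses to a bare sum. For two pairs sharing a leaf $\ell$, the intersection is contained in a strip of width $O(\lambda)$ around $\{a_{1\ell} = 0\}$ together with the region where the two non-shared leaf parameters are both $O(\lambda/|a_{1\ell}|)$; a direct estimate gives $O(\lambda)$ in both subregions. For two disjoint pairs (possible when $p \geq 5$), the conditions involve disjoint coordinates, so the intersection volume factors as $O(\lambda^2(-\ln\lambda)^2)$. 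With all overlaps negligible, summing over the $\binom{p-1}{2}$ leaf pairs yields $\binom{p-1}{2}\cdot p/3 = \binom{p}{3}$, confirming $C_{\mathrm{star}} = \binom{p}{3}$.
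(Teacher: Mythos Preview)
Your argument is correct and follows essentially the same route as the paper's own derivation in Example~\ref{ex:chainStarConstants}: for the chain you isolate the unique extremal triple $(1,p,\emptyset)$ and read off the constant from the monomial tube (the paper does this via Theorem~\ref{thm_constants} with $L=\{0\}$, you via the exponential substitution, which is the same computation); for the star you note the nesting $|\corr(i,j|S)| \geq |\corr(i,j|\bar S)|$, compute the per-pair constant $p/3$, declare overlaps negligible, and sum over $\binom{p-1}{2}$ leaf pairs. Your Cauchy--Binet verification of the principal minors and your explicit case split for the pairwise intersections supply details the paper leaves to the reader.
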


This result surprised us at first. It establishes the counterintuitive
fact that, as $p$ grows, the constant for the
lower curve in  Figure~\ref{fig:graphs_lambda} 
 is exponentially larger  than that for the upper curve.
 Therefore, in order to fully explain the
 relative position of the two curves for
 a wider range of values of $\lambda > 0$, it does not
 suffice to just consider the first order
 asymptotics (\ref{eq:firstorder}).
 Instead, we need to consider some of the higher order terms in the
series expansion (\ref{eq:asymp}).

As we shall see in Section \ref{sec:integrals}, it is difficult 
to determine the constants $C_{\ell,m}$ in (\ref{eq:asymp})  analytically.
In the remainder of this section, we propose a procedure
based on simulation and linear regression for estimating the constants 
$C_{\ell,m}$ in the asymptotic explanations of the volumes $V_G(\lambda)$
or $V_{i,j|S}(\lambda)$. For simplicity
we focus on the latter case and we take~$f = {\rm det}(K_{iR,jR})$.

Suppose that $G$ is a DAG for which 
the real log canonical thresholds $(\ell,m)$ in
Theorem \ref{lemma_unfaithful} and
Corollary \ref{prop_unfaithful} are known.
This is the case for all trees by Theorem \ref{thm:treeV}.
Our procedure goes as follows. We first sample $n$ points uniformly from $\Omega$ and compute the proportion of points $\omega$ that lie in
$\, {\rm Tube}_{i,j|S}(\lambda)\,$
 for different values of $\lambda$. We then fit a linear model to
$$\frac{V_{i,j|S}(\lambda)}{\lambda^{\ell}} \,\,\approx \,\,C_{m-1}(-\ln\lambda)^{m-1} + C_{m-2}(-\ln\lambda)^{m-2}+\cdots + C_0,$$
where $(\ell, m)$ is the known real log canonical threshold. 

In the following, we illustrate this procedure for chains and stars. We analyze two specific examples of partial correlation volumes, namely the ones corresponding to the longest paths in each graph, that is $V_{1,p|\emptyset}(\lambda)$ for ${\rm Chain}_p$ and
$V_{2,3|\emptyset}(\lambda)$ for ${\rm Star}_p$.
  For chain-like graphs,
\begin{equation}
\label{eq:chaincor}
\corr(1,p)\,=\,
\frac{(-1)^{p}\prod_{i=1}^{p-1}a_{i,i+1}}{\sqrt{1+a_{p-1,p}^2\left(1+a_{p-2,p-1}^2\left(\cdots(1+a_{12}^2)\right)\right)}},
\end{equation}
whereas for star graphs,
\begin{equation}
\label{eq:starcor}
\corr(2,3)\,\,=\,\,-\frac{\,a_{12}\,a_{13}}{\sqrt{(1+a_{12}^2)(1+a_{13}^2)}}.
\end{equation}

\begin{figure}[t!]
\centering
\subfigure[$p=6$]{\includegraphics[scale=0.42]{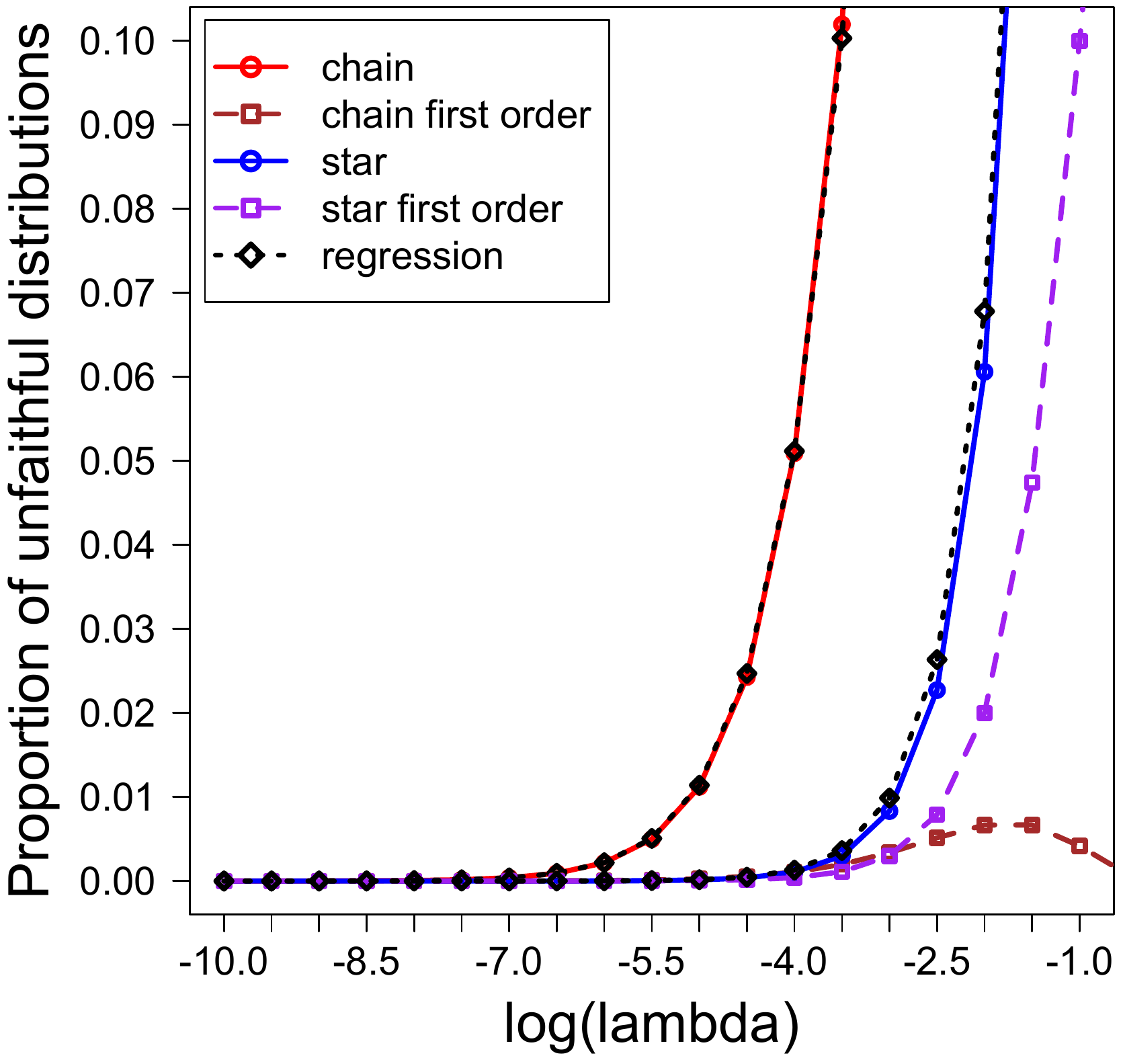}\label{Rplot_chain_star_reg_6}}\qquad\quad
\subfigure[$p=10$]{\includegraphics[scale=0.42]{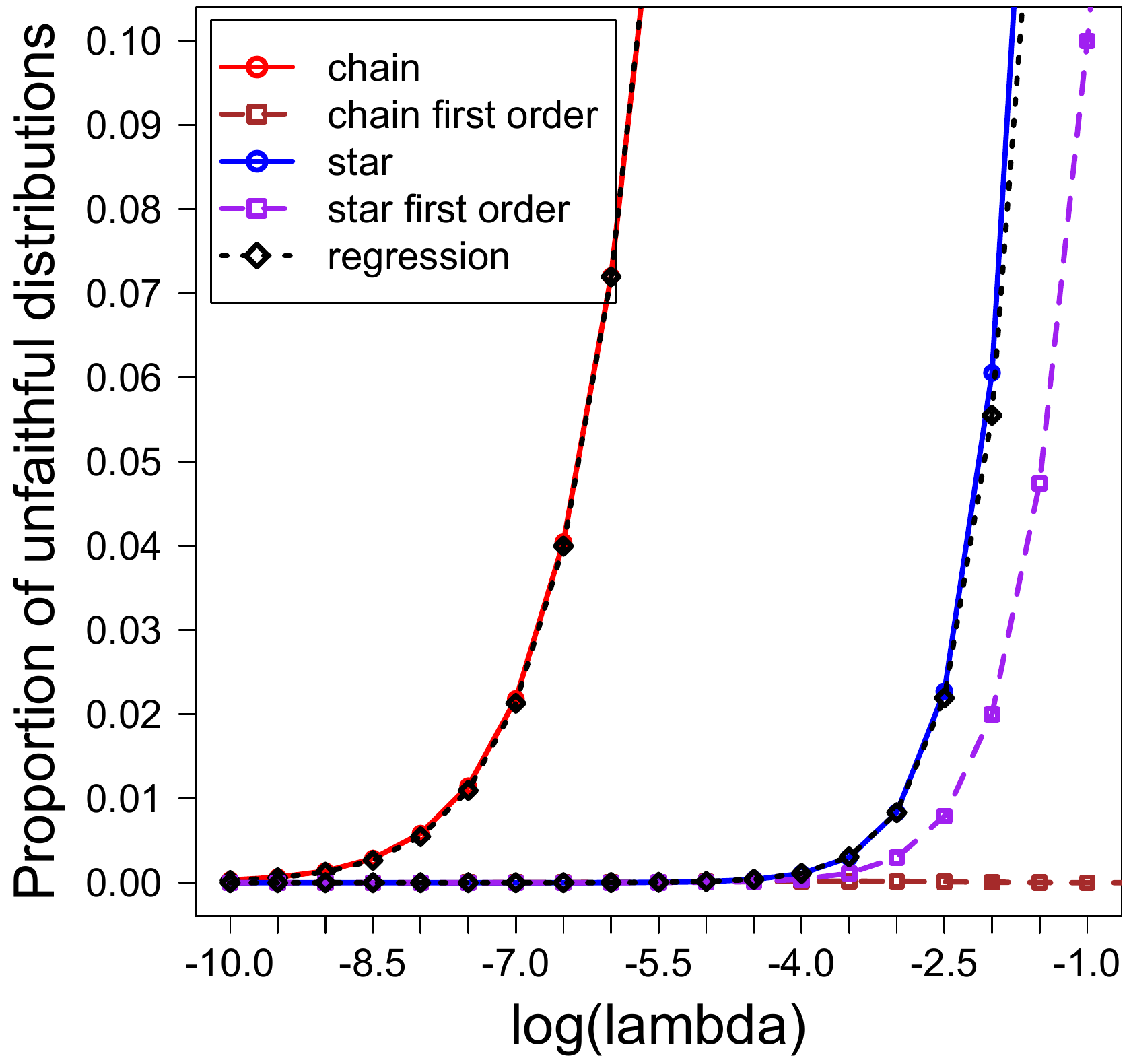}\label{Rplot_chain_star_reg_10}}
\caption{Regression-based asymptotics for chains and stars.}
\label{Rplot_chains_stars}
\end{figure}

We first approximate $V_{1,p|\emptyset}(\lambda)$ for chain-like graphs and $V_{2,3|\emptyset}(\lambda)$ for star graphs by simulation for various values of $\lambda$. This means
 that we sample $n$ points uniformly in the $(p-1)$-dimensional parameter space $\Omega$,
 and we count how many of them are $\leq \lambda$. The results for $p=6$ and $p=10$ are shown in Figure \ref{Rplot_chains_stars}. These are based on a sample size of
$n =  1,000,000$.
We then fit a linear model
$$\frac{V_{1,p|\emptyset}(\lambda)}{\lambda} \,\,\approx \,\,C_{p-2}(-\ln\lambda)^{p-2} + C_{p-3}(-\ln\lambda)^{p-3}+\cdots+C_0$$
for chain-like graphs. The curve resulting from the regression estimates is shown in black in Figure 
\ref{Rplot_chains_stars}. 
The curve resulting from the first-order approximation with the constants computed using Corollary \ref{cor__const_trees} is shown in grey in Figure \ref{Rplot_chains_stars}. We note that especially for chain-like graphs, where the true constant in Corollary \ref{cor__const_trees} is small,  the first order approximation is very bad.
  
   The approximation by regression on the other hand is a fast way to get pretty accurate estimates of all constants. 
The same was done with star graphs, but with the linear model
$$\frac{V_{2,3|\emptyset}(\lambda)}{\lambda} \,\,\approx \,\, C_{1}(-\ln\lambda)+C_{0}. 
$$
Figure \ref{Rplot_chains_stars} shows that the
first-order approximation is more accurate for stars than for chains.

%\begin{rem}[Scaling and strong-faithfulness]
%We consider the setting where the number of nodes $p = p_n$ and hence also the degree of the DAG $d = d_n$ depend on $n$, and we analyze the high-dimensional setting where $p, n\to \infty$. In this setting, Kalisch and
%B\"uhlmann~\cite{kabu07} proved uniform consistency of the PC-algorithm under the strong-faithfulness assumption with
%$\lambda_n \asymp \sqrt{\deg(G)\log(p)/n}$. We are thus interested in understanding how large the number $p_n$ of nodes can be compared to the number of observations $n$ such that $V_G(\lambda)\to 0$. 
%
%Applying Corollary \ref{for:treecor} and Corollary \ref{cor__const_trees}, we find that for stars $V_G(\lambda)$ tends to zero as 
%$$\frac{p\ln p}{n}\left(-\ln \frac{p\ln p}{n}\right)\to 0 \qquad (\textrm{for } n\to\infty)$$
%and hence when $p=o(\log(n))$. {\bf Peter, could you check this?} For chain graphs $V_G(\lambda)$ tends to zero as 
%$$\frac{1}{p!}\frac{\ln p}{n}\left(-\ln \frac{\ln p}{n}\right)^p\to 0 \qquad (\textrm{for } n\to\infty)$$
%and hence when $p=???$ {\bf Peter, could you please fill this in?}
%\qed
%\end{rem}

\section{Volume inequalities for bias reduction in causal inference}
\label{sec:bias}

We now discuss the problem of quantifying bias in causal models. 
Our point of departure is Greenland's paper \cite{Greenland}, where the problem of quantifying bias has been discussed for binary variables. In contrast to the previous sections, in the situation 
 discussed here,
 a large tube volume is in fact desired since it corresponds to small~bias.
In this section we use the notation $K_{i,j|S}$ for the almost-principal
minor $K_{iR,jR}$ of the concentration matrix.

We are interested in estimating the direct effect of an exposure $E$ on a disease outcome $D$ 
(i.e.~the coefficient on the edge $E\to D$) from the partial correlation $\corr(E, D\,|\, S)$, where $S$ is a subset of the measurable variables. This partial correlation is a weighted sum over all {\em open paths} (i.e~paths which d-connect $E$ to $D$) given $S$ (the direct path $a_{ED}$ being just one of them). For estimating the direct effect $a_{ED}$ from $\corr(E, D\,|\, S)$, all open paths other than the direct path are thus considered as bias. We shall analyze two forms of bias which are of particular interest in practice, namely confounding bias and collider-stratification bias.  We start by defining collider-stratification~bias.

Suppose we are given a DAG $G$ with  $D,E\in V$ and there is another node $C$ such that 
$$E\rightarrow V_1 \rightarrow \dots \rightarrow V_s \rightarrow C\leftarrow W_1 \leftarrow \dots \leftarrow W_t \leftarrow D. $$ 
This says that
$C$ is a collider on a path from $D$ to $E$. Stratifying (i.e.~conditioning) on $C$ opens a path between $E$ and $D$ leading to bias when estimating $a_{ED}$. The partial correlation corresponding to the opened path between $E$ and $D$ is known as \emph{collider-stratification bias}. Collider-stratification bias arises for example in the context of discrete variables, where instead of obtaining a random sample from the full population,  a random sample is obtained from the subpopulation of individuals with a particular level of C.

\begin{figure}[t!]
\centering
\subfigure[$\textrm{Tripart}_{5,1}$]{\includegraphics[scale=0.28]{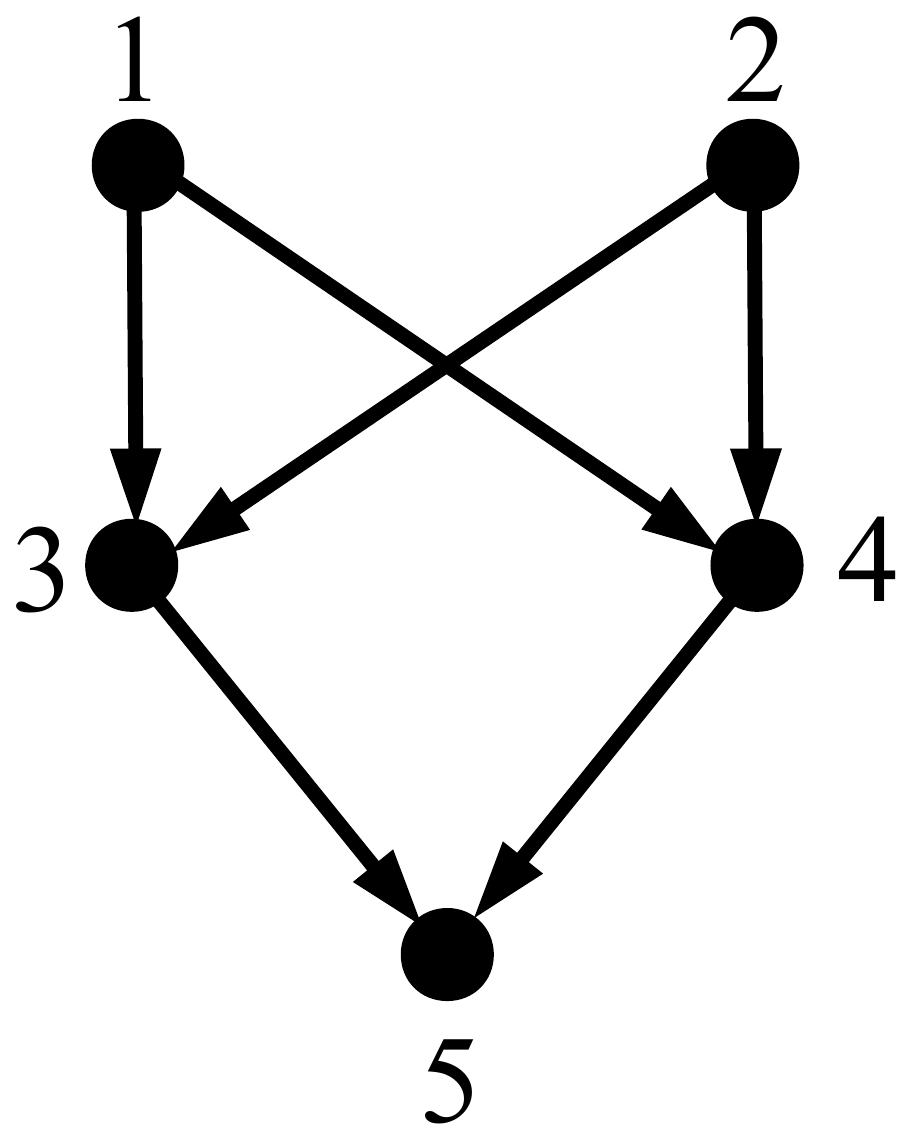}\label{fig:tripart_51}}\qquad\quad
\subfigure[$\textrm{Tripart}_{6,2}$]{\includegraphics[scale=0.28]{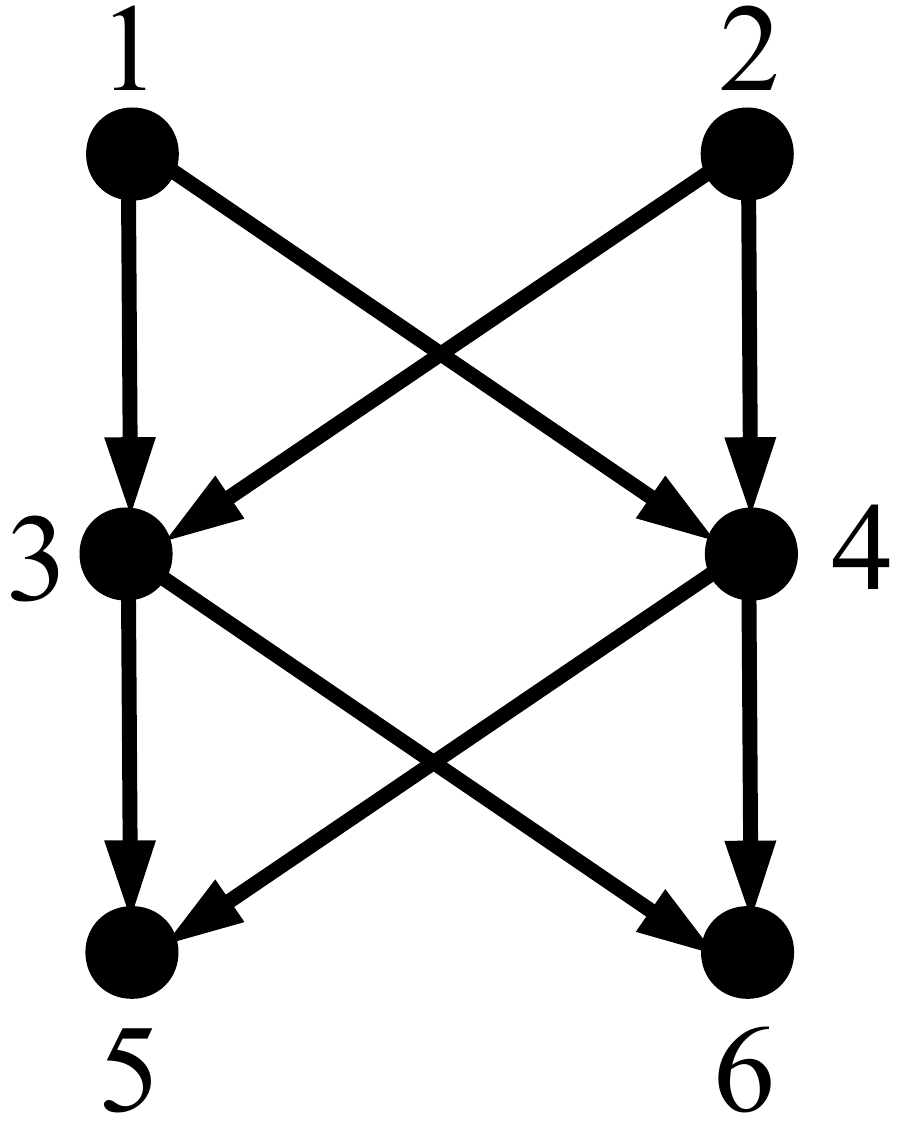}\label{fig:tripart_62}}\qquad\quad
\subfigure[$\textrm{Tripart}_{p,p-3}$]{\includegraphics[scale=0.28]{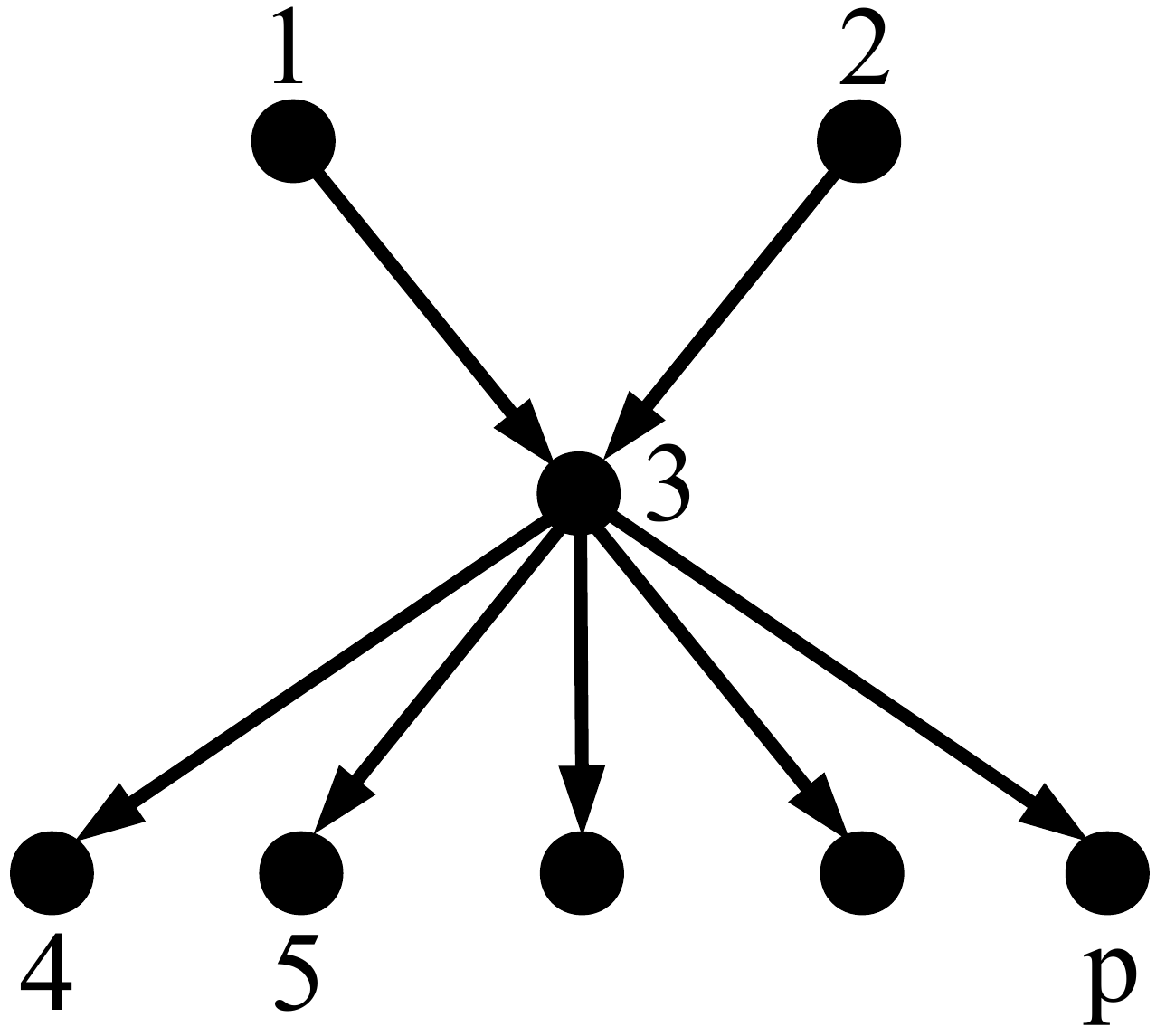}\label{fig:tripart_p}}\qquad\quad
\subfigure[Almost-chain graph]{\includegraphics[scale=0.28]{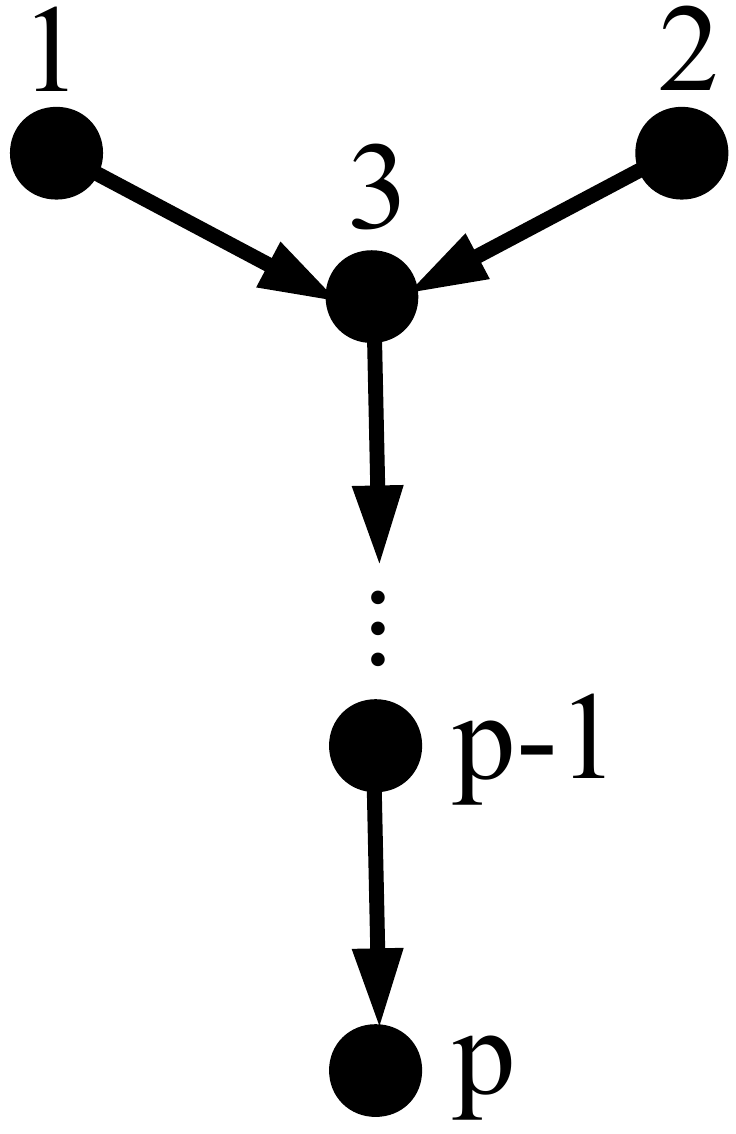}\label{fig:almost_chain}}
\caption{Various tripartite and almost tripartite graphs.}
\label{fig:all_tripartite}
\end{figure}

\begin{ex}
\label{ex:tripartitecollider}
We illustrate collider-stratification bias for the tripartite graph $G = {\rm Tripart}_{5,1}$ shown in Figure \ref{fig:tripart_51}. Let node
$1$ represent the exposure $E$ and node $2$ the disease outcome $D$. In this example, node $5$ is a collider
$C$ for multiple paths between $E$ and $D$. When stratifying  on
$C= 5$,  node $E=1$  is d-connected to node $D=2$  via the following paths:
$$1\rightarrow 3 \rightarrow 5 \leftarrow 4\leftarrow 2, \quad 1\rightarrow 4 \rightarrow 5 \leftarrow 3\leftarrow 2, \quad1\rightarrow 3 \rightarrow 5 \leftarrow 3\leftarrow 2, \quad  1\rightarrow 4 \rightarrow 5 \leftarrow 4\leftarrow 2.
$$
The bias introduced for estimating the direct effect of $E$ on $D$ when conditioning on $C$ is
\begin{equation}
\label{eq:bias1}
\corr(1, 2 \,|\, 5)  \,\,=\,\, \frac{a_{13}a_{35}a_{45}a_{24} + a_{14}a_{45}a_{35}a_{23} + a_{13}a_{35}^2a_{23} + a_{14}a_{45}^2a_{24}}{ \sqrt{\det(K_{134,134})\det(K_{234,234})}}.
\end{equation}
The numerator ${\rm det}(K_{134,234})$ is the weighted sum of all open paths between $E$ and $D$. 
Similarly, nodes $3$ and $4$ are colliders for multiple paths. The bias when conditioning on these~is
\begin{equation} 
\label{eq:bias2}
\corr(1, 2\,|\, 34) \,\,= \,\,\corr(1,2\,|\,345) \,\, = \,\,
\frac{a_{13}a_{23}+a_{14}a_{24}}
{\sqrt{(a_{13}^2+a_{14}^2+1)(a_{23}^2+a_{24}^2+1)}}.
\end{equation}
Problem \ref{prob:tripart} is about
comparing the tube volume for (\ref{eq:bias1})
with the tube volume for  (\ref{eq:bias2}).
\qed
\end{ex}

A question of practical interest in causal inference
is to understand the situations in which stratifying on a collider leads to a particularly large bias. It is widely believed that collider-stratification bias tends to attenuate when it arises from more extended paths  (see \cite{Chaudhuri, Greenland}). 
What follows is our interpretation of this statement as a precise mathematical conjecture.

\begin{prob}
\label{prob:tripart}
Let $D, E\in V$ and $\,
\mathcal{C} =\{C\in V \mid \exists \textrm{ path } P \textrm{ from $E$
  to $D$ with $C$ as a collider} \}$.
We introduce a partial order on 
the {\em collider set} $\,\mathcal{C}$ by setting $C\leq C'$ if  all paths on which $C$ is a collider also go through $C'$. 
Given subsets $S,S' \subset \mathcal{C}$ we set $S \leq S'$ if 
for all $C \!\in\! S$ there exists $C'\!\in\! S'$ such that $C \leq C'$.
If this holds, then the bias introduced when conditioning on $S$
   should be smaller than when conditioning on $S'$.
To make this precise, we conjecture:
\begin{equation}
\label{eq:conjecture1}
V_{D,E \mid S}(\lambda) \,\geq\, V_{D,E \mid S'}(\lambda) \qquad \textrm{for all }
S \leq S' \,\, \textrm{and all} \,\, \lambda\in[0,1].
\end{equation}
\end{prob}
\medskip

\begin{figure}[t!]
\centering
\subfigure[Complete tripartite graphs]{\includegraphics[scale=0.38]{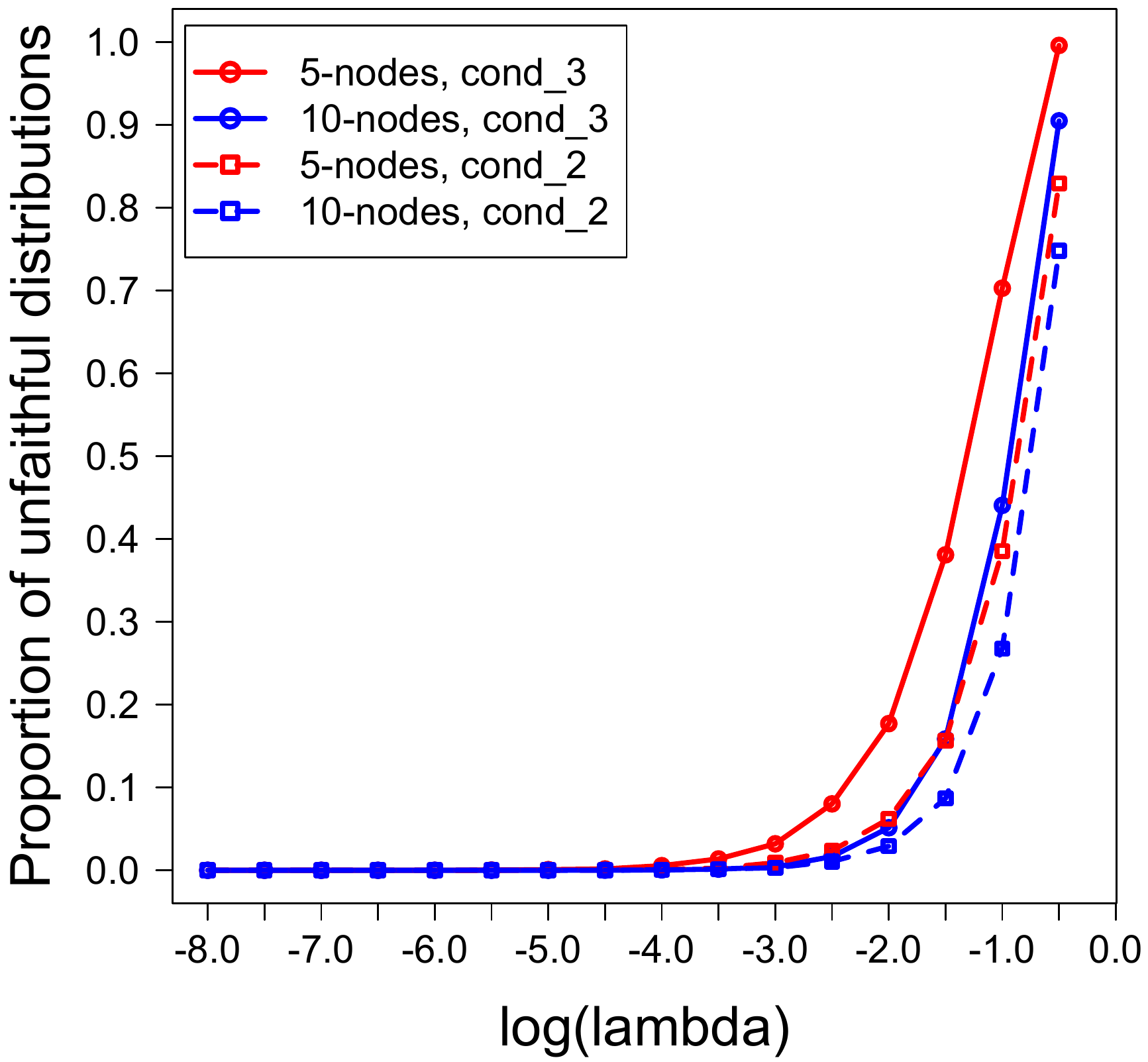}\label{Rplot_tripartite}}\qquad\quad
\subfigure[Bow-ties]{\includegraphics[scale=0.38]{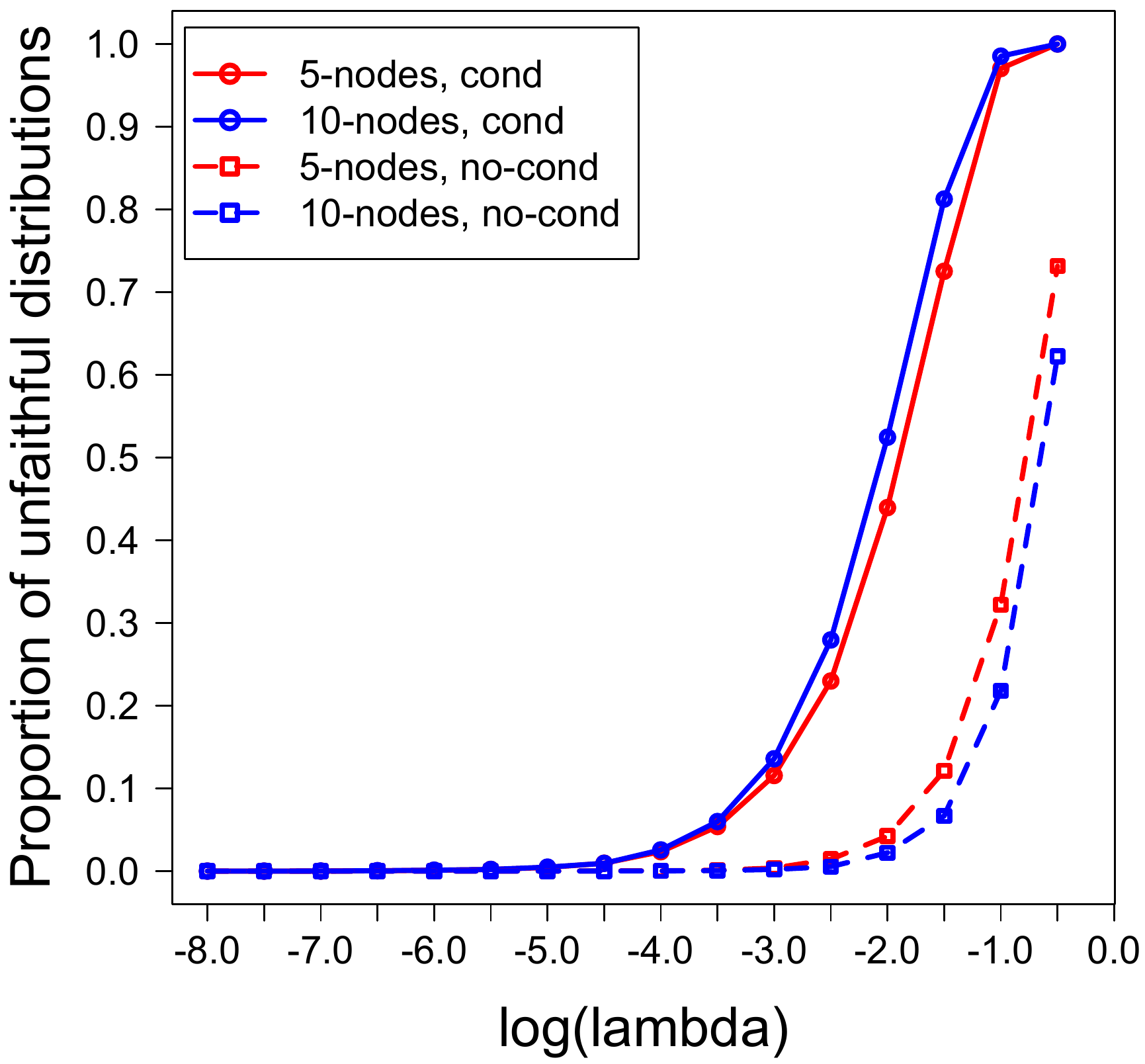}\label{Rplot_bowties}}
\caption{Effect of collider-bias on complete tripartite graphs and bow-ties.}
\label{fig:Mbias}
\end{figure}

We now study this conjecture for the tripartite graphs $\textrm{Tripart}_{p,p'}$.
Here, it says that the collider-stratification bias introduced when conditioning on the third level 
  $\{p-p'+1,\ldots, p\}$ is in general smaller than when conditioning on the second level of 
  nodes $\{3,\ldots, p-p'\}$, i.e.
\begin{equation}
\label{conj_tripart}
V_{1,2 \mid p-p'+1,\ldots, p}(\lambda) \,\,\geq \,\,
V_{1,2 \mid 3,\ldots, p-p'}(\lambda). 
\end{equation}
This inequality is confirmed by the simulations shown in Figure \ref{Rplot_tripartite}.
Here $p=5$, $p'=2$ is shown in red and $p=10$, $p'=2$ is shown in blue. The solid lines correspond to the volume $V_{1,2 \mid p-p'+1,\ldots, p}(\lambda)$, whereas the dashed lines correspond to the volume $V_{1,2 \mid 3,\ldots, p-p'}(\lambda)$.

Going beyond simulations, we now present
  an algebraic proof of
  our conjecture when $\lambda$ is small, for the tripartite graphs in
  Figure \ref{fig:tripart_p} where the second level has only one node.

\begin{ex} 
\label{ex:tripart_1}
For $G = \textrm{Tripart}_{p,p-3}$ the left hand side of
(\ref{conj_tripart}) is given by
$$
\det(K_{1,2\mid 4,5,\ldots p}) \,\, = \,\, a_{13}a_{23}(\sum_{k=4}^p a_{3k}^2).
$$
Depending on the values of $p$, the corresponding real log canonical threshold is given by
\begin{equation}
\label{RLCT_tripart_1}
\RLCT(1,2|4,\ldots,p)\,\,= \,\,\begin{cases} 
\,(\frac{1}{2}, 1)& \textrm{if $\,p=4$,}\\
\,(1, 3)& \textrm{if $\,p=5$,}\\
\,(1, 2)& \textrm{if $\,p\geq 6$.}
\end{cases}
\end{equation}
 For $p=4$ this was Example \ref{ex:sec4graph2}.
 To prove (\ref{RLCT_tripart_1}) for $p\geq 5$, we need two
  ingredients. Firstly, if the polynomial is a product of factors
  with disjoint variables, then the RLCT is the minimum of the RLCT of
  the factors, taken with multiplicity (e.g. if the RLCTs are
  $(\ell,m_1)$ and $(\ell,m_2)$, then the combined RLCT is $(\ell,
  m_1+m_2)$, just like in the case of a
  monomial). Secondly, the RLCT of a sum of squares of $d$ unknowns is equal to
     $(d/2,1)$. We saw this in  Example \ref{ex:d-ball}.
  
For the right hand side of (\ref{conj_tripart}), we condition
  on node $3$. Now, the defining polynomial~is 
$$
\det(K_{1,2\mid 3})\,\,= \,\, a_{13}a_{23}.
$$
By Proposition \ref{prop:RLCTMonomial},
this has RLCT $(1,2)$, which is larger or
equal to all values of $(\ell, m)$ in (\ref{RLCT_tripart_1}). To
compare the behavior of $V_{1,2 \mid 3}(\lambda)$ and $V_{1,2 \mid
  4,\ldots, p}(\lambda)$ for small $\lambda$, we will need to derive
the constant $C$ in (\ref{eq:firstorder}). In Example
\ref{ex:bowtieConstant}, we will show that if $p \geq 6$ and the parameter space
is
\begin{equation}\label{eq:newParamSpace}
\Omega \,\,=\,\, \{a \in \R^{p-1} : |a_{12}| \leq 1, |a_{23}| \leq 1, 
a_{34}^2+\cdots + a_{3p}^2 \leq 1\},
\end{equation}
then the asymptotic constants are given by $C_{1,2|3} = 1$ and
$C_{1,2|4,\ldots,p} = 2+ 2/(p-5)$. 
We conclude that $V_{1,2 \mid 3}(\lambda) \leq V_{1,2 \mid 4,\ldots, p}(\lambda)$ 
for small values of $\lambda $, as conjectured in Problem \ref{prob:tripart}.
\qed
\end{ex}

\begin{ex} A slight twist to Example \ref{ex:tripart_1} is the almost-chain graph shown in Figure \ref{fig:almost_chain}, with edges $E=\{(1,3), (2,3), (3,4), \ldots (p-1,p)\}$. For such graphs, Problem \ref{prob:tripart} asks whether
$$V_{1,2 \mid s}(\lambda) \leq V_{1,2 \mid t}(\lambda) \qquad \textrm{if }s\leq t.$$
This holds for small $\lambda$ because $\det(K_{1,2\mid s})=a_{13}a_{23}\prod_{k=3}^{s-1}a_{k,k+1}^2$.
By Proposition \ref{prop:RLCTMonomial}, 
$$
\RLCT(1,2|s)\,\,= \,\,\begin{cases} 
\,(1, 2)& \textrm{if $\,s=3$,}\\
\,(\frac{1}{2}, s-3)& \textrm{if $\,s\geq 4$,}
\end{cases}
$$
so $\RLCT(1,2|s) \geq \RLCT(1,2|t)$ for $s \leq t$. \qed
\end{ex}

In Example \ref{ex:tripart_1} we resolved Problem \ref{prob:tripart} for
tripartite graphs whose middle level consists of one node.
We next consider the case 
${\rm Tripart}_{p,1}$
where the third level has one node.

\begin{ex}
\label{ex:tripart_1third}
The graph $\textrm{Tripart}_{5,1}$ shown in Figure
\ref{fig:tripart_51} was discussed in Example~\ref{ex:tripartitecollider}. We focus on
   the numerators in (\ref{eq:bias1}) and in (\ref{eq:bias2}). 
  The polynomial (\ref{eq:bias1}) will be studied in
  Example \ref{ex:tripartite_blowup} where we prove that
  $\textrm{RLCT}(1,2 \,|\, 5) = (1,3)$. Using the same
  method for $\textrm{Tripart}_{p,1}$ gives 
$$
\RLCT(1,2|p) \,\,=\,\,\begin{cases} 
\,(\frac{1}{2}, 1)& \textrm{if $\,p=4$,}\\
\,(1, 3)& \textrm{if $\,p=5$,}\\
\,(1, 2)& \textrm{if $\,p\geq 6$,} 
\end{cases} \qquad 
\RLCT(1,2|3,\ldots,p-1) \,\,=\,\,\begin{cases} 
\,(1, 2)& \textrm{if $\,p=4$,}\\
\,(1, 1)& \textrm{if $\,p\geq 5$.}
\end{cases} 
$$
Thus, we conclude that $V_{1,2 \mid p}(\lambda) \geq V_{1,2
  \mid 3,\ldots, p-1}(\lambda)$ for small values of $\lambda> 0$. \qed
\end{ex}

\begin{ex} \label{ex:tripart_62}
For the graph $\textrm{Tripart}_{6,2}$ in Figure \ref{fig:tripart_62}
we check if $V_{1,2 \mid 56}(\lambda)\geq V_{1,2
  \mid 34}(\lambda)$ for small $\lambda$.
As before, $\textrm{RLCT}(1,2\,|\, 3,4) = (1,1)$, but
a hard  computation using the tools of Section \ref{sec:resolution} 
 reveals that now $\textrm{RLCT}(1,2\,|\, 5,6) = (1,1)$.
Thus, knowledge of the RLCT is not sufficient to
establish (\ref{eq:conjecture1}). What is needed is a finer analysis
along the lines of Section~\ref{sec:integrals}.
\qed
\end{ex}

\smallskip

The second form of bias studied by Greenland \cite{Greenland} is confounder bias. 
In the context of a directed graphical model $G$,
a \emph{confounder} for the effect of $E$ on $D$ is a node $C$ such that
$$E\leftarrow V_1 \leftarrow \dots \leftarrow V_s \leftarrow C\rightarrow W_1 \rightarrow \dots \rightarrow W_t \rightarrow D.$$ 
The partial correlation introduced by the path from $E$ to $D$ passing through $C$ is 
referred to as \emph{confounder bias}. In such situations, stratifying on $C$ blocks 
the path between $E$ and $D$ (i.e. $C$ d-separates $E$ from $D$) and therefore corresponds to bias removal. 

In certain graphs, such as the bow-tie example in \cite{Greenland}, there are variables where stratifying removes confounder bias but at the same time introduces collider-stratification bias. For instance, 
 consider the graph $G = {\rm Bow}_{5}$, where node $4$ corresponds to exposure $E$ and node 
 $5$ corresponds to disease outcome $D$. Then
conditioning on node $3$  blocks the paths 
\begin{equation}
\label{paths_1}
4\leftarrow 3 \rightarrow 5, \quad 4\leftarrow 1 \rightarrow 3 \rightarrow 5, \quad 4\leftarrow 3 \leftarrow 2 \rightarrow 5,\end{equation}
and therefore reduces confounder bias, but opens the path
\begin{equation}
\label{paths2}
4\leftarrow 1 \rightarrow 3 \leftarrow 2 \rightarrow 5
\end{equation} 
and therefore introduces collider-stratification bias. This trade-off is of particular interest in situations where one cannot condition on $1$ and $2$, for example because these variables were unmeasured.  It is believed that in such examples the bias removed by conditioning on the confounders is larger than the collider-stratification bias introduced and one should therefore stratify. We translate this statement into the following mathematical problem:

\begin{prob}
\label{prob:bowtie}
Let $D, E\in V$ and we denote by $\mathcal{D}$ the confounder-collider subset, i.e.
$$
\mathcal{D} \,\,=\,\,\mathcal{C} \,\cap \,
\{C\in V \mid \exists \textrm{ path } \pi \textrm{ from $E$ to $D$ having $C$ as a confounder} \}.
$$
We conjecture the following inequality for the relevant tube volumes:
\begin{equation*}
V_{D,E \mid S}(\lambda) \,\geq\, V_{D,E \mid \emptyset}(\lambda), \qquad \textrm{for all }
S\subset\mathcal{D} \,\,\textrm{and all} \,\,\lambda\in[0,1].
\end{equation*}
\end{prob}
\medskip

This conjectural inequality is interesting for the bow-tie graphs $\textrm{Bow}_{p}$.
It   means that conditioning on the nodes in the second level reduces bias since the bias removed by conditioning on the confounders is larger than the collider-stratification bias introduced by conditioning:
\begin{equation}
\label{conj:bow}
V_{p-1,p \mid 3,\ldots, p-2}(\lambda)\,\, \geq \,\,
V_{p-1,p \mid \emptyset}(\lambda). 
\end{equation}
This is confirmed by our simulations in Figure \ref{Rplot_bowties}, for $p=5$ in red and $p=10$ in blue. The solid line corresponds to the volume $V_{p-1,p \mid 3,\ldots, p-2}(\lambda)$ and the dashed line corresponds to
$V_{p,p-1 \mid \emptyset}(\lambda)$. In the following example we prove 
the inequality (\ref{conj:bow}) for $p=5$
and small~$\lambda > 0$.

\begin{ex}
Let $G = \textrm{Bow}_{5}$ as in Figure \ref{fig:bowtie}.
The  left hand side (\ref{conj:bow}) is represented by
$$
\det(K_{4,5\mid 3})\,\, = \,\, a_{13}a_{14}a_{23}a_{25}.
$$
This monomial is the path in (\ref{paths2}). The corresponding real log canonical threshold is $(1,4)$. 
The polynomial representing the right hand side (\ref{conj:bow}) is a weighted sum of the paths in (\ref{paths_1}):
$$
\det(K_{4,5\mid \emptyset}) \,\,=\,\, a_{34}a_{35}(1+a_{13}^2+a_{23}^2)+a_{23}a_{25}a_{34}+a_{13}a_{14}a_{35}.
$$
We derive its real log canonical threshold using the blowups described in Section \ref{sec:resolution}. We find that it is $(1,1)$. Since $(1,4)<(1,1)$, we 
conclude $V_{4,5 \mid 3}(\lambda) \geq V_{4,5 \mid \emptyset}(\lambda)$.
\qed
\end{ex}

\section{Normal crossing and blowing up}
\label{sec:resolution}

In this section we develop more refined techniques
for computing real log canonical thresholds.
The following theorem combines the monomial case of
Proposition \ref{prop:RLCTMonomial} with the
smooth case of Proposition \ref{thm:smoothFunctionRLCT}.
As promised in Section \ref{sec:real_log_threshold},
this furnishes the proofs for these two propositions.

\begin{thm} \label{thm:RLCTMonomial2}
Suppose $\varphi(\omega) = \omega_1^{\tau_1} \cdots
\omega_d^{\tau_d}$ and  $f(\omega) = \omega_1^{\kappa_1} \cdots
\omega_r^{\kappa_r} g(\omega)$ 
where $\tau_1, \ldots, \tau_d$ are
nonnegative integers, $\kappa_1, \ldots, \kappa_r$ are
positive integers, and the hypersurface $g(\omega)=0$ is either
  empty or smooth and normal crossing (see definition below) with $\omega_1, \ldots,
  \omega_r$.  We write $\omega_0 = g$, $\kappa_0 = 1$
    and $\tau_0 = 0$, and we define
$$\ell  = \min_{i \in \mathcal{I}} \frac{\tau_i+1}{\kappa_i},\quad \mathcal{J} =
\left\{
\operatorname*{argmin}_{i \in \mathcal{I}} \frac{\tau_i+1}{\kappa_i} \right\},
\quad m = |\mathcal{J}| , $$
where $\mathcal{I}$ is the set of all indices $0\leq i \leq r$ such
that $\omega_i$ has a zero in $\Omega$. 
 Then we have 
 $$ \RLCT_\Omega(f;\varphi) \,\, = \,\, (\ell,m), $$
 provided the equations
$\omega_i = 0$ for $i \in \mathcal{J}$
have a solution in the interior of $\Omega$.
\end{thm}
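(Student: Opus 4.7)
The plan is to compute $\RLCT_\Omega(f;\varphi)$ via the zeta function characterization (ii) of Definition \ref{thm:DefRLCT}, i.e., locating the smallest pole and its multiplicity of
$$\zeta(z) \,=\, \int_\Omega |g(\omega)|^{-z} \prod_{i=1}^r |\omega_i|^{\tau_i - z\kappa_i} \prod_{i=r+1}^d |\omega_i|^{\tau_i} \, d\omega.$$
First I would introduce a finite analytic partition of unity subordinate to a cover of $\Omega$ by small open sets. Pieces whose support is disjoint from the singular locus $\{f=0\}$ contribute entire functions of $z$ and so cannot influence the poles of $\zeta$. It therefore suffices to analyze the contribution of each piece supported near a point where some subset of the factor functions vanishes.

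Next, fix a point $\omega^{\ast}\in\Omega$ and let $S\subseteq\{0,1,\dots,r\}$ index the functions in $\{g,\omega_1,\dots,\omega_r\}$ that vanish at $\omega^{\ast}$. The normal crossing hypothesis says that the differentials of these functions are linearly independent at $\omega^{\ast}$. By the implicit function theorem I can choose local real-analytic coordinates $(u_1,\dots,u_d)$ near $\omega^{\ast}$ so that $u_i$ is exactly the corresponding factor $\omega_i$ (or $g$ in the case $i=0$) for each $i\in S$. In these coordinates the integrand has the monomial form
$$|f(\omega)|^{-z}\varphi(\omega)\,d\omega \,=\, \Bigl(\prod_{i\in S}|u_i|^{\tau_i - z\kappa_i}\Bigr)\cdot h(u)\,du,$$
where $h$ is real-analytic and bounded away from zero on a neighborhood of $\omega^{\ast}$: it absorbs the Jacobian of the coordinate change, the smooth nonvanishing factors $|\omega_i|^{\tau_i}$ for $i>r$ or $i\in\{1,\dots,r\}\setminus S$, and the smooth nonvanishing value of $|g|^{-z}$ when $0\notin S$. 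Multiplication by such an $h$ does not alter the pole locus of the local integral.

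The local zeta integral then factors as $\prod_{i\in S}\int_0^{\varepsilon} u_i^{\tau_i-z\kappa_i}\,du_i$ times a holomorphic function of $z$. Each one-dimensional factor has a simple pole at $z=(\tau_i+1)/\kappa_i$, so the local contribution has a pole of order equal to $\bigl|\{i\in S:(\tau_i+1)/\kappa_i=z_0\}\bigr|$ at each $z_0$. Summing over the partition of unity, the smallest pole of $\zeta$ is $\ell=\min_{i\in\mathcal I}(\tau_i+1)/\kappa_i$, and its multiplicity is the maximum, over points $\omega^{\ast}$ and corresponding subsets $S$, of $|S\cap\mathcal J|$. The hypothesis that $\{\omega_i=0:i\in\mathcal J\}$ has a common solution in the interior of $\Omega$ guarantees that some chart achieves $S\supseteq\mathcal J$, so the maximum equals $m$; interiority also rules out cancellation between overlapping pieces of the partition of unity (as opposed to a boundary of $\Omega$ causing spurious simplifications). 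This yields $\RLCT_\Omega(f;\varphi)=(\ell,m)$, under the convention that larger $m$ at a given $\ell$ corresponds to a smaller pair.

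The main obstacle will be the coordinate-change step. One must verify that the normal crossing assumption really allows $g$ to simultaneously play the role of a coordinate together with whichever $\omega_i$'s vanish at $\omega^{\ast}$, and that the Jacobian, the transverse factors $|\omega_i|^{\tau_i}$ with $i\notin S$, and any excess analytic data all combine into a single nonvanishing analytic factor $h$ with no effect on the pole structure. Handling points on the semianalytic boundary of $\Omega$ requires some additional care (working with half-space charts and checking that the one-variable integrals retain the same pole pattern), but the hypothesis that the active vanishing of $\omega_i$, $i\in\mathcal J$, occurs at an interior point sidesteps this issue for the leading pole.
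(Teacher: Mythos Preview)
Your proposal is correct and follows essentially the same strategy as the paper's proof: localize, use the normal crossing hypothesis and the implicit function theorem to straighten $g$ into a coordinate alongside the vanishing $\omega_i$'s, reduce to a monomial integral, and invoke the interior-point hypothesis to guarantee the minimum is actually achieved. The paper packages the localization via Lemma~\ref{lemma:RLCTLocalization} and the chain rule for RLCTs rather than a partition of unity on the zeta function, and it cites \cite[Proposition~3.7]{LinThesis} for the monomial case instead of computing the one-variable poles directly, but the underlying argument is the same.

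One small point worth tightening: when $0\notin S$ you absorb $|g|^{-z}$ into $h$, so $h$ depends holomorphically on $z$ as well as analytically on $u$; the claim that this does not shift the leading pole is standard (Taylor expand $h$ in the $u_i$, $i\in S$, and observe that higher-order terms push poles strictly to the right), but it is not literally ``multiplication by a nonvanishing analytic function.'' The paper sidesteps this by working at the level of RLCTs rather than zeta integrals, where dividing out a unit is immediate.
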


The normal crossing hypothesis in Theorem \ref{thm:RLCTMonomial2}
means that the system
$$
f \,= \,\omega_1\frac{\partial f}{\partial \omega_1}\,= \,\cdots\, = \,
\omega_r\frac{\partial f}{\partial \omega_r} \, =
 \,\frac{\partial f}{\partial \omega_{r+1}} \,= \,\cdots \,= \,\frac{\partial f}{\partial \omega_d} \,= \,0
$$
does not have any solutions in $\Omega$. See \cite{kollar} to learn more
about normal crossing singularities. 

We begin with  a technical lemma establishing that the RLCT
 can be computed locally.

\begin{lem} \label{lemma:RLCTLocalization}
For every $x\in \Omega$, there exists a
neighborhood $\Omega_x \subset \Omega$ of $x$ such that 
$$
\RLCT_{\Omega_x}(f;\varphi) \,=\, \RLCT_U(f;\varphi)
$$
for all neighborhoods $U \subset \Omega_x$ of $x$. Moreover, 
$$
\RLCT_\Omega(f;\varphi) \,=\, \min_{x} \RLCT_{\Omega_x}(f;\varphi)
$$
where we take the minimum over all $x$ in the real analytic hypersurface
$\{\omega \in \Omega : f(\omega) = 0\}$.
\end{lem}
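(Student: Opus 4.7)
The plan is to work with the zeta function characterization $\zeta(z) = \int_\Omega |f(\omega)|^{-z} \varphi(\omega) d\omega$ from Definition \ref{thm:DefRLCT}(ii), where $\RLCT_\Omega(f;\varphi)$ records the smallest pole and its multiplicity. Two facts drive the argument: (a) on any compact subset of $\Omega$ on which $f$ is nowhere zero, the integrand is uniformly bounded in $\omega$ for $\operatorname{Re}(z)$ in a fixed half-plane, so the corresponding zeta integral extends to an entire function of $z$; and (b) by Hironaka's resolution of singularities \cite{Hironaka}, for every point $x$ with $f(x) = 0$ there is a neighborhood $\Omega_x \subset \Omega$ and a proper real analytic map $\rho : M \to \Omega_x$ which is an isomorphism off $\{f = 0\}$ and under which both $f \circ \rho$ and the Jacobian $|J_\rho|$ are everywhere locally of the monomial-times-unit form needed to apply Proposition \ref{prop:RLCTMonomial}.

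First I would prove the stabilization claim. For $x \notin Z := \{\omega \in \Omega : f(\omega)=0\}$, pick $\Omega_x$ disjoint from $Z$; then $\zeta_U$ is entire for every $U \subset \Omega_x$ by (a), so $\RLCT_U(f;\varphi)$ equals the convention value with $\ell = \infty$ independently of $U$. For $x \in Z$, take $\Omega_x$ on which a resolution $\rho$ as in (b) exists, and shrink $\Omega_x$ further if necessary so that $Z \cap \bar\Omega_x$ has only $x$ as a limit point of $\partial \Omega_x$. The fiber $\rho^{-1}(x)$ is compact; for any open $U$ with $x \in U \subset \Omega_x$, split the pulled-back integral as
$$\zeta_{\Omega_x}(z) \,=\, \int_{\rho^{-1}(U)} |f \!\circ\! \rho|^{-z} (\varphi\!\circ\!\rho) |J_\rho|\, du \,+\, \int_{\rho^{-1}(\Omega_x \setminus U)} |f\!\circ\! \rho|^{-z} (\varphi\!\circ\!\rho) |J_\rho|\, du.$$
After a further shrinking of $\Omega_x$ we may assume $Z \cap \bar\Omega_x \subset U$, so that $f\circ\rho$ is bounded away from zero on the compact complementary region and the second summand is entire. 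On each coordinate chart of $M$ covering $\rho^{-1}(x)$, Proposition \ref{prop:RLCTMonomial} computes the chart's contribution to the smallest pole directly from the exponents in the monomial form of $f\circ\rho$ and $|J_\rho|$; the minimum over the finitely many charts above $\rho^{-1}(x)$ is reached on an arbitrarily small neighborhood of that fiber, hence is unchanged when $U$ is replaced by a smaller neighborhood of $x$. Therefore $\RLCT_U(f;\varphi) = \RLCT_{\Omega_x}(f;\varphi)$ for every such $U$.

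Next I would establish the minimum formula. Since $Z$ is compact, from the family $\{\Omega_x\}_{x\in Z}$ extract a finite subcover $\Omega_{x_1}, \ldots, \Omega_{x_N}$ of $Z$, and adjoin finitely many open sets $W_1, \ldots, W_k$ contained in $\Omega \setminus Z$ so that together they cover $\Omega$. Choose a smooth partition of unity $\{\psi_i\} \cup \{\chi_j\}$ subordinate to this cover with $\psi_i(x_i) > 0$. Then
$$\zeta_\Omega(z) \,=\, \sum_{i=1}^N \int_{\Omega_{x_i}} |f|^{-z}\,\psi_i\, \varphi\, d\omega \,+\, \sum_{j=1}^k \int_{W_j} |f|^{-z}\,\chi_j\, \varphi\, d\omega.$$
Each $\chi_j$-summand is entire by (a). Each $\psi_i$-summand has the same pole structure as $\zeta_{\Omega_{x_i}}(f;\varphi)$: the density $\psi_i \varphi$ is strictly positive on some open neighborhood of $x_i$, and any contribution from points in $\Omega_{x_i} \cap Z$ lying outside that neighborhood is entire on the relevant chart or is accounted for by a different summand via a different $\Omega_{x_j}$, while Proposition \ref{prop:phi} combined with the stabilization of the first part shows that the local pole at $x_i$ is exactly $\RLCT_{\Omega_{x_i}}(f;\varphi)$. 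The smallest pole with its multiplicity of a finite sum of meromorphic functions is precisely the minimum under the ordering of Definition \ref{thm:DefRLCT}, and we conclude $\RLCT_\Omega(f;\varphi) = \min_i \RLCT_{\Omega_{x_i}}(f;\varphi) = \min_{x \in Z} \RLCT_{\Omega_x}(f;\varphi)$.

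The main obstacle will be the careful verification that shrinking $U$ inside $\Omega_x$ leaves the smallest pole of the zeta integral unchanged; the essential input is properness of $\rho$ and the monomial form of $f \circ \rho$, but making sure $\Omega_x$ is small enough that the resolution behaves cleanly over its closure requires some bookkeeping. A secondary technical point is the use of Proposition \ref{prop:phi} through the partition of unity, since each $\psi_i$ is strictly positive only near $x_i$ rather than throughout $\Omega_{x_i}$; this is harmless because the stabilization in the first half of the lemma shows that only the behavior of the weight near $x_i$ controls the local RLCT of the $i$-th summand.
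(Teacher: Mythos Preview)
The paper does not supply a proof here: it simply cites \cite[Lemma~3.8, Proposition~3.9]{LinThesis}. Your proposal therefore goes well beyond the paper by spelling out an actual argument, and the overall strategy (resolution of singularities locally, then a partition-of-unity globalisation, exploiting positivity of the leading Laurent coefficients to rule out cancellation) is the standard one and is essentially what those cited results do.

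There is, however, a genuine gap in your stabilization step for $x\in Z$. You claim one may ``shrink $\Omega_x$ further if necessary so that $Z\cap\bar\Omega_x$ has only $x$ as a limit point of $\partial\Omega_x$'' and then, after $U$ has been introduced, that ``after a further shrinking of $\Omega_x$ we may assume $Z\cap\bar\Omega_x\subset U$, so that $f\circ\rho$ is bounded away from zero on the compact complementary region and the second summand is entire.'' Neither statement can hold: $Z=\{f=0\}$ is a hypersurface, so $Z\cap\Omega_x$ is $(d-1)$-dimensional and is never contained in an arbitrarily small $U$; and in any case $\Omega_x$ must be fixed \emph{before} $U$ varies, so it cannot be shrunk depending on $U$. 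Hence the integral over $\rho^{-1}(\Omega_x\setminus U)$ is not entire in general.

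The repair is to argue on the resolution via semicontinuity rather than via vanishing. Cover the compact fiber $\rho^{-1}(x)$ by finitely many monomial charts and take $\Omega_x$ small enough that $\rho^{-1}(\Omega_x)$ lies in their union. In each chart the local RLCT at a point $y$ is determined by the set of coordinate hyperplanes through $y$; moving $y$ off the fiber can only remove hyperplanes, so the local RLCT can only increase. Thus the minimum of the local RLCTs over $\rho^{-1}(\Omega_x)$ is already achieved on the fiber $\rho^{-1}(x)$, and since $\rho^{-1}(U)$ still contains that fiber for every neighborhood $U\subset\Omega_x$ of $x$, the minimum is unchanged when $\Omega_x$ is replaced by $U$. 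With this correction your globalisation step is fine: the inequalities $0\le\psi_i\le 1$ together with positivity give $\RLCT_{\Omega_{x_i}}(f;\psi_i\varphi)\ge\RLCT_{\Omega_{x_i}}(f;\varphi)$, while restricting to a small $V\ni x_i$ on which $\psi_i>0$ and invoking Proposition~\ref{prop:phi} plus the (now corrected) stabilization gives the reverse inequality, so each summand has exactly the pole $\RLCT_{\Omega_{x_i}}(f;\varphi)$ and the minimum formula follows.
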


\begin{proof}
This comes from \cite[Lemma 3.8, Proposition 3.9]{LinThesis}.
\end{proof}

\begin{proof}[Proof of Theorem \ref{thm:RLCTMonomial2}]  Lemma~\ref{lemma:RLCTLocalization} states that
  $\RLCT_\Omega(f;\varphi)$ is the minimum of $\RLCT_{\Omega_x}(f;\varphi)$ as $x$
  varies over $\Omega$. Writing each subset $\Omega_x$ as $R_x \,\cap\, \Omega$ where
  $R_x$ is a sufficiently small neighborbood of $x$ in $\R^d$, we claim that
  $\RLCT_\Omega(f;\varphi) = \min_{x \in \Omega} \RLCT_{R_x}(f;\varphi)$ if this
  minimum is attained in the interior of $\Omega$. Indeed, for $x$ in the
  interior of $\Omega$, we get $\RLCT_{R_x} (f;\varphi) =
  \RLCT_{\Omega_x}(f;\varphi)$. Otherwise, the
  volume of $\{\omega \in \Omega_x : f(\omega) \leq \lambda\}$ is
  less than that of $\{\omega \in R_x : f(\omega) \leq \lambda\}$
  for all $\lambda$. Hence $\RLCT_{R_x} (f; \varphi) \leq \RLCT_{\Omega_x}
  (f;\varphi)$, and the claim follows.

Now to prove Theorem \ref{thm:RLCTMonomial2},
 it suffices to show that for each $x \in
\Omega$ we have
$$\RLCT_{R_x}(f;\varphi) \,\,=\,\, \left( \,\min_{i \in \mathcal{I}_x} \frac{\tau_i+1}{\kappa_i}, \left|
\biggl\{
\operatorname*{argmin}_{i \in \mathcal{I}_x} \frac{\tau_i+1}{\kappa_i} \biggr\}
\right| \,  \right)$$
where $\mathcal{I}_x$ is the set of all indices $0\leq i \leq r$ that
satisfy $\omega_i(x) = 0$. Without loss of generality, suppose $x = (x_1, \ldots,
x_d)$ where $x_1= \cdots = x_{s} = 0$ and $x_{s+1}, \ldots, x_r$ are
nonzero. If $g(x) \neq 0$, we may divide $f(\omega)$ by $g(\omega)$
without changing the RLCT in a sufficiently small neighborhood $R_x$
of $x$. The RLCT of the remaining monomial is determined by
\cite[Proposition 3.7]{LinThesis}. Now, let us suppose $g(x)=0$.
Because $g(\omega)$ is normal crossing with $\omega_1, \ldots,
\omega_r$, one of the derivatives $\partial g/\partial \omega_j$ must
be nonzero at $x$ for
some $s+1\leq
j \leq d$. We assume $R_x$ is sufficiently small so that this derivative
and $\omega_{s+1}, \ldots, \omega_r$ do
not vanish. Consider the map $\sigma:R_x \rightarrow \R^d$ given by
$$
\sigma_j(\omega) = g(\omega), \quad 
\sigma_i(\omega) = \omega_i \quad \text{for }i\neq j.
$$
The Jacobian matrix of $\sigma$ is nonsingular, so this map is an
isomorphism onto its image. Set $U = \mu(R_x)$ and $\rho =
\sigma^{-1}: U \rightarrow R_x$. Then, for all $\mu \in U$, we have
$$
(f \circ \rho)(\mu) \,=\, \mu_1^{\kappa_1} \cdots \mu_{s}^{\kappa_{s}}
\mu_j \cdot a(\mu) \quad\,\, \hbox{and} \quad\,\, (\varphi \circ \rho)(\mu) 
\,=\, \mu_1^{\tau_1} \cdots \mu_s^{\tau_s} \cdot b(\mu),
$$
where the factors $a(\mu)$ and $b(\mu)$ do not vanish in $U$. By using the chain rule \cite[Proposition
4.6]{LinThesis}, we get $\RLCT_{R_x}(f;\varphi) = \RLCT_U(f \circ
\rho;\varphi \circ \rho)$. The latter RLCT can be computed once
again by dividing out the nonvanishing factors and applying 
\cite[Proposition 3.7]{LinThesis}.
\end{proof}

The hypersurface  $\{f(\omega)=0\}$ may not
satisfy the hypothesis in Theorem \ref{thm:RLCTMonomial2}. 
In that case, we can try to simplify its
singularities via a change of variables $\rho: U
\rightarrow \Omega$. 
With some luck, the transformed hypersurface $\{(f \circ \rho) (\mu)=0\}$ will be described
locally by monomials and the RLCT can be computed using
Theorem \ref{thm:RLCTMonomial2}. More precisely, let $U$ be a
$d$-dimensional real analytic manifold and 
$\rho:U \rightarrow \Omega$ a real analytic map that is \emph{proper},
i.e. the preimage of any compact set is compact.
Then $\rho$ \emph{desingularizes} $f(\omega)$ if it satisfies the
following conditions:
\begin{enumerate}
\item The map $\rho$ is an isomorphism outside the variety $\{\omega \in
  \Omega : f(\omega)=0\}$.
\item Given any $y \in U$, there exists a local chart with coordinates
  $\mu_1, \ldots, \mu_d$ such that 
$$
(f \circ \rho)(\mu) = \mu_1^{\kappa_1} \cdots \mu_d^{\kappa_d} \cdot a(\mu) ,
\quad \det \partial \rho(\mu) = \mu_1^{\tau_1} \cdots
\mu_d^{\tau_d} \cdot b(\mu) 
$$
where $\det \partial \rho$ is the Jacobian determinant, the exponents $\kappa_i,
\tau_i$ are nonnegative integers and the real
analytic functions $a(\mu), b(\mu)$ do not vanish at $y$.
\end{enumerate}
If such a desingularization exists, then we may apply $\rho$ to the volume function
\eqref{eq:firstorder} to calculate the RLCT. Care must be
taken to multiply the
measure $\varphi$ with the Jacobian determinant
$\lvert \det \partial \rho\rvert$ in accordance with the
change-of-variables formula for integrals.

Hironaka's celebrated theorem on the resolution of singularities \cite{Hauser, Hironaka}
guarantees that such a desingularization exists for all real
analytic functions $f(\omega)$. The proof employs transformations
known as \emph{blowups} to simplify the singularities. We now describe
the blowup $\rho: U
\rightarrow \R^d$ of the origin in $\R^d$. The manifold $U$ can be
covered by local charts $U_1, \ldots, U_d$ such that each chart is isomorphic to $\R^d$ and each restriction $\rho_i:U_i \rightarrow \R^d$ is the monomial map 
$$(\mu_1, \ldots, \mu_{i-1}, \xi, \mu_{i+1},
\ldots, \mu_d) \,\mapsto\, (\xi\mu_1, \ldots, \xi\mu_{i-1}, \xi, \xi\mu_{i+1}, \ldots, \xi\mu_d).$$
Here, the coordinate hypersurface $\xi=0$, also called the
\emph{exceptional divisor}, runs through all the charts. If the origin
is locally the intersection of many smooth
hypersurfaces with distinct tangent hyperplanes, then these
hypersurfaces can be separated by blowing up the origin
\cite{Hauser}. 

\begin{ex} \label{ex:fourlines}
Consider the curve $\{f(x,y) =xy(x+y)(x-y) = 0\}$ in
Figure \ref{fig:tubes_4}. 
 To resolve this singularity,
we blow up the origin. In the first chart, the map is
$\rho_1 : (\xi, y_1) \mapsto (\xi, \xi y_1)$, so
$$
f \circ \rho_1 \,\,= \,\, \xi^4 y_1(1+y_1)(1-y_1) \quad \hbox{and} \quad \det \partial \rho_1 = \xi.
$$
The lines $\{y=0\}$, $\{x+y = 0\}$ and $\{x-y = 0\}$ are transformed to
$\{y_1=0\}$, $\{y_1=-1\}$ and $\{y_1 = 1\}$ respectively in this chart, thereby separating
them. The line $\{x=0\}$ does not show up here, but it
 appears  as $\{x_1=0\}$ in the second chart, where $\rho_2 : (x_1,
\xi) \mapsto (\xi x_1, \xi)$ and
$$
f \circ \rho_2 =  \xi^4 x_1(x_1+1)(x_1-1), \quad \det \partial \rho_2 = \xi.
$$
Since the curve $\{(1+y_1)(1-y_1)=0\}$ is normal crossing with $\,\xi
y_1\,$ in the first chart, we can now apply  Theorem
\ref{thm:RLCTMonomial2}. The chain rule \cite[Proposition
4.6]{LinThesis} shows that $\RLCT_\Omega(f;1)$ is the minimum of $\RLCT_{U_i}(f\circ \rho_i;
\det \partial \rho_i)$ for $i=1,2$. In both charts, this RLCT equals
$(\frac{1}{2},1)$. \qed
\end{ex}

\begin{ex}
\label{ex:4charts}
Let $p = 4$ and $G$ be the almost complete DAG with $a_{13} = 0$.
We consider the
 conditional independence statement $1 \independent 3\given 4$.
The correlation hypersurface is defined by
$$  f \,\,= \,\,{\rm det}(K_{12,23}) \,\,= \,\,
a_{14} a_{23}^2 a_{34}+a_{14} a_{23} a_{24}+ a_{12} a_{24} a_{34} -a_{12} a_{23}+a_{14} a_{34}.
$$
The real singular locus is a line in the parameter space $\R^5$, since
$\, {\rm Singu}_{1,3,2} \, = \, \langle  a_{12}, a_{14}, a_{23}, a_{34} \rangle $.
Blowing up   this line in $\R^5$ creates four charts $U_1,U_2,U_3,U_4$.
 For instance, the first chart has
$$ \rho_1 \,: \,U_1 \rightarrow \R^5 \, , \,\,(\xi, \mu_{14}, \mu_{23}, a_{24},\mu_{34})
\mapsto (\xi, \xi \mu_{14}, \xi \mu_{23}, a_{24},\xi \mu_{34}), \quad \det \partial \rho_1 = \xi^3.$$
Then $f$  transforms to
$ \,f \circ \rho_1 = \xi^2 \cdot g\,$ where
$\,g = \mu_{14} \mu_{23}^2 \mu_{34}  \xi^2+\mu_{14} \mu_{34} +\mu_{14} \mu_{23} a_{24}+\mu_{34}  a_{24} -\mu_{23} $.
The hypersurface
$\{ g = 0\}$ has no real singularities, so it is smooth in $U_1$.
We can thus apply  Theorem \ref{thm:RLCTMonomial2}
with $\mathcal{I} = \{0,1\}$ to find $\,{\rm RLCT}_{U_1}( \xi^2 \cdot g , \xi^3 ) = (1,1) $.
The behavior is the same on $U_2, U_3$ and $U_4$, and we conclude that
$\,{\rm RLCT}(1,3|4) \,=\,(\ell,m) \,=\, (1,1)$. 
This example is one of the $12$ cases that were labeled as ``Blowup'' in
 Table \ref{table2}. The other $11$ cases are similar.
\qed
\end{ex}

\begin{ex}
\label{ex:tripartite_blowup}
In Example \ref{ex:tripart_1third} we claimed
that ${\rm RCLT}(1,2\,|\,5) = (1,3)$ for  $G = {\rm Tripart}_{5,1}$.
We now prove this claim by using the blowing up method.
 The polynomial in question is
$$ f \,\, = \,\, {\rm det}(K_{1,2|5}) \,\, = \,\, 
(a_{13} a_{35} + a_{14} a_{45}) (a_{23} a_{35} + a_{24} a_{45}).
$$
The singular locus of the hypersurface $\{f = 0\}$ is given by
$$ {\rm Singu}_{1,2,34} \,\, = \,\,
\langle \,a_{35}\,, \,a_{45}\, \rangle \,\,\cap \,\,
\bigl\langle \, \hbox{$2 \times 2$-minors of} \,
\begin{pmatrix}
a_{13} &  a_{23} & a_{45}  \\
 a_{14} & a_{24} & \!\!\! -a_{35} \\
\end{pmatrix} \,
\bigr\rangle.
$$
We blow up the linear subspace $\{a_{35} = a_{45} = 0\}$ in $\R^6$.
This creates two charts. The map for the first chart is $\,\rho_1:
(a_{13},a_{14},a_{23},a_{24}, \xi ,\mu_{45}) \mapsto
(a_{13},a_{14},a_{23},a_{24},\xi,\xi \mu_{45})$. This map gives
$$
f\circ \rho_1 = \xi^2 (a_{13}  + a_{14} \mu_{45}) (a_{23} + a_{24}
\mu_{45}), \quad \det \partial \rho_1 = \xi.
$$
Now, by setting $a_{13} = x-a_{14}\mu_{45}$ and $a_{23} = y -
a_{24}\mu_{45}$, the transformed function $f \circ \rho_1$ is the monomial 
$\xi^2 x y$. Then Theorem \ref{thm:RLCTMonomial2} can be employed 
to evaluate $\,\RLCT_{U_1}(\xi^2 x y, \xi) = (1,3)$. The calculation in the second
 chart is completely analogous.
\qed
\end{ex}

The same approach as in Example \ref{ex:tripartite_blowup}
can be applied to the polynomial
$\,f = {\rm det}(K_{1,2|56})\,$ in Example \ref{ex:tripartite6}.
A lengthy calculation, involving many charts and multiple blowups,
eventually reveals that $G = {\rm Tripart}_{6,2}$  satisfies
$\,{\rm RCLT}(1,2|56) = (1,1)$.
This was stated  in Example \ref{ex:tripart_62}.

\section{Computing the constants}
\label{sec:integrals}

We now describe a method for finding the constant $C$ in
the formula $V(\lambda) \approx
C\lambda^{-\ell}(-\ln \lambda)^{m-1}$ in (\ref{eq:firstorder}). The
two theorems in this section are new and they extend the results of Greenblatt \cite{Greenblatt}
and Lasserre \cite{Lasserre} on the volumes of sublevel sets. Unless stated otherwise, all measures used in this section are
the standard Lebesgue measures.
We begin by showing that the constant $C$ is
 a function of the highest order term in the Laurent expansion of the zeta function of $f$.

\begin{lem} \label{thm:volumeConstant} Given real analytic functions $f, \varphi:\Omega
  \rightarrow \R$, consider the Laurent expansion of
$$
\zeta(z) := \int_{\Omega} |f(\omega)|^{-z} \varphi(\omega)d\omega \,\,=\,\,
\frac{a_{\ell,m}}{(\ell-z)^m} + 
\frac{a_{\ell,m-1}}{(\ell-z)^{m-1}} + 
\cdots
$$
where $\ell$ is the smallest pole and $m$ its
multiplicity. Then, asymptotically as $\lambda$ tends to zero, 
$$
V(\lambda) := \int_{|f(\omega)|\leq \lambda} \varphi(\omega)d\omega
\,\,\approx\,\, \frac{a_{\ell,m}}{\ell(m-1)!} \,\lambda^{\ell} (-\ln \lambda)^{m-1}.
$$
\end{lem}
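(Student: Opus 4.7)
The strategy is to exploit Mellin-transform duality: $\zeta(z)$ is the Mellin transform of the state density $v(\lambda)=dV/d\lambda$, so its leading Laurent coefficient at the smallest pole $z=\ell$ pins down the leading asymptotic of $v(\lambda)$ as $\lambda\to 0^+$, and a single integration recovers $V(\lambda)$.

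First I would record the elementary identity
\begin{equation*}
\int_0^1 \lambda^{\ell-1-z}(-\ln\lambda)^{m-1}\,d\lambda \;=\; \frac{(m-1)!}{(\ell-z)^m} \qquad (\operatorname{Re}(z)<\ell),
\end{equation*}
which follows from the substitution $u=-\ln\lambda$ turning the integral into $\int_0^\infty u^{m-1}e^{-(\ell-z)u}\,du$. The general asymptotic series (\ref{eq:asymp}) for $V$ may be differentiated termwise to give a parallel expansion $v(\lambda)\approx \sum_{\ell,m} C'_{\ell,m}\,\lambda^{\ell-1}(-\ln\lambda)^{m-1}$. Substituting this into $\zeta(z)=\int_0^\infty \lambda^{-z}v(\lambda)\,d\lambda$ and using the identity above shows that each term $C'_{\ell,m}\lambda^{\ell-1}(-\ln\lambda)^{m-1}$ contributes a pole of the form $C'_{\ell,m}(m-1)!/(\ell-z)^m$ to $\zeta$. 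Matching against the prescribed Laurent expansion at the smallest pole $z=\ell$ forces
\begin{equation*}
C'_{\ell,m}\;=\;\frac{a_{\ell,m}}{(m-1)!}.
\end{equation*}

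Second, I would integrate $v$ to recover $V(\lambda)=\int_0^\lambda v(t)\,dt$. For the dominant term, the substitution $u=-\ln t$ reduces the integral to an incomplete Gamma function, and one round of integration by parts yields
\begin{equation*}
\int_0^\lambda t^{\ell-1}(-\ln t)^{m-1}\,dt \;=\; \int_{-\ln\lambda}^\infty u^{m-1}e^{-\ell u}\,du \;\sim\; \frac{\lambda^\ell(-\ln\lambda)^{m-1}}{\ell}
\end{equation*}
as $\lambda\to 0^+$. Combining these two steps gives $V(\lambda)\approx \frac{a_{\ell,m}}{\ell(m-1)!}\,\lambda^\ell(-\ln\lambda)^{m-1}$, as claimed.

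The main obstacle is justifying rigorously the passage from the Laurent expansion of $\zeta$ at $z=\ell$ to the termwise asymptotic of $v$, and ensuring that the remaining poles contribute only to strictly lower-order terms of $V$. This is exactly the Mellin-inversion content of the asymptotic machinery developed in \cite{AGV,LinThesis,Watanabe09}; one may either cite their general result directly, or make the step explicit by shifting the contour in the inversion formula $v(\lambda)=\tfrac{1}{2\pi i}\int_{c-i\infty}^{c+i\infty}\lambda^{z-1}\zeta(z)\,dz$ past $z=\ell$, picking up the residue there and controlling the remainder via standard vertical-line decay estimates on $\zeta$.
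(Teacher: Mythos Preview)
Your proposal is correct and follows essentially the same route as the paper: identify $\zeta$ as the Mellin transform of the state density $v=dV/d\lambda$, extract the leading asymptotic $v(\lambda)\sim \frac{a_{\ell,m}}{(m-1)!}\lambda^{\ell-1}(-\ln\lambda)^{m-1}$ from the top Laurent coefficient at $z=\ell$ (the paper cites \cite[Theorem 3.16]{LinThesis} for this, whereas you spell out the Mellin identity and coefficient-matching), and then integrate by parts to pass from $v$ to $V$. Your explicit acknowledgment that the rigorous Mellin-inversion step must be imported from \cite{AGV,LinThesis,Watanabe09} matches the paper's citation-based proof exactly.
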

\begin{proof}
According to the proof of \cite[Theorem 7.1]{Watanabe09}, the volume function
$V(\lambda)$ equals $\int_0^\lambda v(s)ds$
where $\,v(s)=\int_\Omega \delta(s-\!f(\omega))\,\varphi(\omega)d\omega\,$ is the state density function
and $\delta$ is the delta function. Now, using the proof of
\cite[Theorem 3.16]{LinThesis}, we obtain
$$
v(s) =  \frac{a_{\ell,m}}{(m-1)!}\, s^{\ell-1} (-\ln s)^{m-1} +\,
o(s^{\ell-1} (-\ln s)^{m-1}) \quad \text{as }s \rightarrow 0.
$$
Here we used the little-o notation. Finally, using integration by
parts, we find that
$$
V(\lambda) =  \frac{a_{\ell,m}}{\ell (m-1)!}\, \lambda^{\ell} (-\ln
\lambda)^{m-1} +\, o(\lambda^{\ell} (-\ln
\lambda)^{m-1}) \quad \text{as }\lambda \rightarrow 0. \qedhere
$$
\end{proof}

\begin{ex}
In Example \ref{ex:d-ball}, we saw that the volume of the
$d$-dimensional ball defined by $\,|\omega_1^2+\cdots+\omega_d^2| \leq
\lambda\,$ is equal to $\,V(\lambda) = C \lambda^{-d/2}\,$ for some positive constant
$C$. We here show how to compute that constant using asymptotic
methods. By Lemma
\ref{thm:volumeConstant}, $C = 2\alpha/(d\,2^d)$~where $\alpha$ is the coefficient of
$(d/2-z)^m$ in the Laurent expansion of the zeta function
$$
\zeta(z) \,=\, \int_{\R^d} |\omega_1^2+\ldots+\omega_d^2|^{-z} d\omega.
$$
Computing this Laurent coefficient from first principles is not
easy. Instead, we derive $\alpha$~using the asymptotic theory of
Laplace integrals. The connection between such integrals and volume
functions was alluded to in Definition \ref{thm:DefRLCT}. By
\cite[Proposition 5.2]{LinThesis}, the Laplace integral
$$
Z(N)\, =\, \int_{\R^d} e^{-N(\omega_1^2+\cdots+\omega_d^2)} d\omega
$$
is asymptotically $\alpha \,\Gamma(\frac{d}{2}) N^{-d/2}$ for large $N$. But this
Laplace integral also decomposes as
$$
Z(N) \, =\, \int_\R e^{-N\omega_1^2}d\omega_1 \cdots \int_\R
e^{-N\omega_d^2}d\omega_d \,\,=\,\, (\sqrt{\pi} N^{-1/2})^d,
$$
where each factor is the classical Gaussian integral. Solving for
$\alpha$ leads to the formula 
$$
C \,=\, \frac{\pi^{d/2} }{2^d \cdot \Gamma(\frac{d}{2}) \cdot \frac{d}{2}}
\,=\, \frac{\pi^{d/2} }{2^d \cdot \Gamma(\frac{d}{2}+1)}.
$$
%In fact, by using the technique in 
%\cite[Proposition 5.8]{LinThesis}, we can also prove that this
%asymptotic formula for $V(\lambda)$ is exact for all $\lambda > 0$. 
\vskip -0.5cm
\qed
\end{ex}

In Section \ref{sec:real_log_threshold}, we saw how the RLCTs of smooth
hypersurfaces and of hypersurfaces defined by monomial
functions can be computed. The following two theorems  and  their
accompanying examples
demonstrate how the asymptotic constant $C$ can also be evaluated in
those instances. Here, we say that two hypersurfaces \emph{intersect
  transversally} in $\R^d$  if the points of intersection are smooth
on the hypersurfaces and if the corresponding tangent spaces at each intersection point generate
the tangent space of $\R^d$ at that point.

\begin{thm}\label{thm:smoothConstant} Let $\{f = 0\}$ be a smooth
  hypersurface and let $\varphi:\Omega \rightarrow \R$
    be positive. Suppose $\partial f/ \partial {\omega_1}$ is
  nonvanishing in
    $\Omega$. Let $W$
    be the projection of the hypersurface $\{f = 0\} \subset \Omega$ onto the subspace $\{(\omega_2, \ldots,
    \omega_d) \in \R^{d-1}\}$ and let $\rho:\Omega \rightarrow \R^d$ be the map $\omega
    \mapsto (f(\omega), \omega_2, \ldots, \omega_d)$. If the boundary of $\Omega$ intersects transversally with
  the hypersurface $\{f=0\}$, then 
$$ 
V(\lambda) := \int_{\{\omega \in \Omega : |f(\omega)| \leq \lambda\}}
\varphi(\omega) d\omega \,\,\approx\,\, C \lambda
$$
asymptotically (as $\lambda\to 0$), where 
$$C \,\,=\,\, 2\int_{W} \frac{\varphi}{|\partial_{\omega_1}f|} \circ
\rho^{-1} (0, \omega_2, \ldots, \omega_d)\, d\omega_2 \cdots
d\omega_d.$$
\end{thm}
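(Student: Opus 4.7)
\medskip

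\noindent\textbf{Proof plan.} The strategy is to perform a change of variables that straightens the level sets of $f$, reducing the volume integral to an iterated integral over the small parameter $\lambda$.

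\smallskip

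First I would use the hypothesis $\partial f/\partial \omega_1 \neq 0$ on $\Omega$ to argue that $\rho(\omega) = (f(\omega), \omega_2, \ldots, \omega_d)$ is a diffeomorphism from $\Omega$ onto $\rho(\Omega)$. Indeed, along each line $(\,\cdot\,, \omega_2,\ldots,\omega_d)$ the function $\omega_1 \mapsto f(\omega)$ is strictly monotone, hence injective, and the Jacobian determinant $\det \partial \rho = \partial f/\partial \omega_1$ is nonzero throughout $\Omega$. Applying the change-of-variables formula,
\begin{equation*}
V(\lambda) \,\,=\,\, \int_{\rho(\Omega)\,\cap\,\{|\mu_1|\leq \lambda\}} \frac{\varphi}{|\partial_{\omega_1}f|}\circ \rho^{-1}(\mu)\, d\mu.
\end{equation*}

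Next I would apply Fubini's theorem to write $V(\lambda) = \int_{-\lambda}^{\lambda} g(\mu_1)\, d\mu_1$, where
\begin{equation*}
g(\mu_1) \,\,=\,\, \int_{W_{\mu_1}} \frac{\varphi}{|\partial_{\omega_1}f|}\circ \rho^{-1}(\mu_1, \omega_2,\ldots,\omega_d)\, d\omega_2 \cdots d\omega_d,
\end{equation*}
and $W_{\mu_1} = \{(\omega_2,\ldots,\omega_d) : (\mu_1,\omega_2,\ldots,\omega_d) \in \rho(\Omega)\}$ is the $\mu_1$-slice of the image; note $W_0 = W$. If $g$ is continuous at $\mu_1 = 0$, then $V(\lambda) = 2\lambda\, g(0) + o(\lambda)$, which is exactly the desired asymptotic with $C = 2g(0)$, matching the formula in the statement.

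\smallskip

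The main obstacle is establishing continuity of $g$ at the origin, because the domain of integration $W_{\mu_1}$ varies with $\mu_1$: in general $W_{\mu_1}$ is a subset of $\R^{d-1}$ whose boundary moves as $\mu_1$ perturbs away from $0$. This is precisely where the transversality hypothesis enters. I would argue that because $\partial \Omega$ meets $\{f=0\}$ transversally, the boundary $\partial W_{\mu_1}$ depends continuously (indeed Lipschitz-continuously) on $\mu_1$ near zero, and the symmetric difference $W_{\mu_1} \triangle W$ has $(d-1)$-dimensional Lebesgue measure tending to zero as $\mu_1 \to 0$. Combined with the fact that the integrand $(\varphi/|\partial_{\omega_1}f|)\circ \rho^{-1}$ is continuous (hence locally bounded) on a compact neighborhood of the slice, this yields $g(\mu_1) \to g(0)$ via the dominated convergence theorem. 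The transversality ensures that the failure of transversality (which would produce tangencies where $W_{\mu_1}$ sprouts or collapses subsets of positive measure non-smoothly) does not occur; without it one could only conclude that $g$ is measurable, not continuous, and the asymptotic constant could fail to exist or take the stated form.

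\smallskip

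Once $g(0)$ is identified with the explicit integral over $W$ via the definition of $\rho^{-1}(0,\omega_2,\ldots,\omega_d)$ (the unique $\omega_1$ with $f(\omega) = 0$ given the other coordinates), the asymptotic $V(\lambda) \approx C\lambda$ with $C = 2g(0)$ drops out immediately, completing the proof. Alternatively, one could package the result by verifying the zeta-function criterion in Lemma \ref{thm:volumeConstant}: the same change of variables shows $\zeta(z)$ has a simple pole at $z=\ell=1$ with residue $\ell \cdot C/(m-1)! \cdot 1 = C$, recovering the formula.
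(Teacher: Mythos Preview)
Your argument is correct and in fact somewhat more direct than the paper's. Both proofs begin with the same change of variables via $\rho$, but you then work with the volume function $V(\lambda)$ itself: you write $V(\lambda)=\int_{-\lambda}^{\lambda} g(\mu_1)\,d\mu_1$ and reduce the asymptotic to the continuity of the slice integral $g$ at $\mu_1=0$, which you justify via the transversality hypothesis controlling the moving domain $W_{\mu_1}$. The paper instead passes through the zeta function $\zeta(z)=\int_\Omega |f|^{-z}\varphi\,d\omega$: after the same change of variables it writes the integral as $\int_W\int_{\varepsilon_1}^{\varepsilon_2}|f|^{-z}(\cdots)\,df\,d\omega_2\cdots d\omega_d$, Taylor-expands the integrand in $f$ about $f=0$, reads off the residue at the simple pole $z=1$, and then invokes Lemma~\ref{thm:volumeConstant} to convert that residue into the constant $C$. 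The two uses of transversality are dual: the paper uses it to guarantee that the inner limits $\varepsilon_1,\varepsilon_2$ straddle zero for each point of $W$, while you use it to ensure that the outer domain $W_{\mu_1}$ varies continuously in $\mu_1$. Your route is more elementary and self-contained; the paper's route has the advantage of fitting uniformly into the zeta-function/RLCT framework used elsewhere (e.g.\ in the proof of Theorem~\ref{thm_constants}), where the direct volume computation would be harder. The alternative you sketch in your final sentence is essentially the paper's actual proof.
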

\begin{proof} The asymptotics of the volume $V(\lambda)$ depends only on the
  region $\{\omega \in \Omega: |f(\omega)|\leq \lambda\}$. So we may
  assume that $\Omega$ is a small neighborhood of the hypersurface
  $\{f(\omega)=0\}$. As we saw in the proof of 
  Theorem \ref{thm:RLCTMonomial2},
    the map $\rho$ is an isomorphism onto its image. Thus
  after changing variables, the zeta function associated to $V(\lambda)$ becomes
\begin{align*}
\zeta(z) &=  \int_{\rho(\Omega)} |f|^{-z}
\frac{\varphi}{|\partial_{\omega_1} f|} \circ \rho^{-1}(f, \omega_2,
\ldots, \omega_d) \,\,df d\omega_2\cdots d\omega_d \\
&=\int_{W} \int_{\varepsilon_1(\omega_2, \ldots, \omega_d)}^{\varepsilon_2(\omega_2, \ldots, \omega_d)} |f|^{-z}
\frac{\varphi}{|\partial_{\omega_1} f|} \circ \rho^{-1}(f, \omega_2,
\ldots, \omega_d) \,\,df d\omega_2\cdots d\omega_d.
\end{align*}
Here, the lower and upper limits $\varepsilon_1, \varepsilon_2$ straddle
 zero because the boundary of $\Omega$ is transversal to the hypersurface.
By substituting the Taylor series
$$
\frac{\varphi}{|\partial_{\omega_1} f|} \circ \rho^{-1}(f, \omega_2,
\ldots, \omega_d) = \frac{\varphi}{|\partial_{\omega_1} f|} \circ \rho^{-1}(0, \omega_2,
\ldots, \omega_d) + O(f)
$$
and the exponential series $\varepsilon_2^{1-z} = 1 + O(1-z)$, we get the Laurent expansion
\begin{align*}
\int_{0}^{\varepsilon_2} |f|^{-z}
\frac{\varphi}{|\partial_{\omega_1} f|} \circ \rho^{-1}(f, \omega_2,
\ldots, \omega_d) \,\,df &= \left[ \frac{|f|^{1-z}}{1-z}\cdot \frac{\varphi}{|\partial_{\omega_1} f|} \circ \rho^{-1}(0, \omega_2,
\ldots, \omega_d) \right]_0^{\varepsilon_2} + \cdots \\
&=\frac{1}{1-z}\cdot  \frac{\varphi}{|\partial_{\omega_1} f|} \circ \rho^{-1}(0, \omega_2,
\ldots, \omega_d) + \cdots.
\end{align*}
The same is true for the integral from $\varepsilon_1$ to
$0$. The result now follows from Lemma~\ref{thm:volumeConstant}.
\end{proof}

\begin{ex} By Theorem \ref{thm:uptosix}, all conditional
  independence statements in small complete graphs lead to smooth
  hypersurfaces. Here we analyze the statement $\,1\independent 2\given 3\,$ in the complete 3-node DAG. This example was studied in \cite[\S 2]{URBY}. The corresponding partial correlation is
$$\corr(1,2\,|\, 3)\,=\,\frac{a_{13}a_{23}-a_{12}}{\sqrt{1+a_{23}^2}\sqrt{1+a_{12}^2+a_{13}^2}}. $$
This partial correlation hypersurface lives
in $\R^3$ and it is depicted in \cite[Figure 2(b)]{URBY}. 

We apply Theorem \ref{thm:smoothConstant} by setting
$\Omega:=[-1,1]^3$, $f:=\corr(1,2\,|\, 3)$ and
$\varphi:=1/2^3$, the uniform distribution on $\Omega$. We
choose $\omega_1$ to be $a_{12}$. Then $\rho^{-1}(0,a_{13},a_{23}) =
(a_{13}a_{23},a_{13},a_{23})$. The projection $W$ of the
surface $\,\{a_{12} = a_{13}a_{23}\}\,$ onto $\,\{ (a_{13},a_{23}) \in
[-1,1]^2 \}\,$ is the whole square $[-1,1]^2$. The formula for the constant $C$ now
simplifies to
$$
C \,\,=\,\,
\frac{1}{4}\int_{-1}^1\int_{-1}^1\sqrt{1+a_{13}^2}\sqrt{1+a_{13}^2+a_{13}^2a_{23}^2}\,\,da_{13}\,da_{23}
\,\,\approx\,\, 5.4829790759.
$$
%% 5.4829790758893461906.
This two-dimensional integral was evaluated numerically using
\texttt{Mathematica}. \qed
\end{ex}

We now come to the monomial case that was discussed
in Theorem \ref{thm:RLCTMonomial2}.

\begin{thm} 
\label{thm_constants}
Let $g:\Omega \rightarrow \R$ and $\varphi:\Omega \rightarrow \R$ be
positive, and let $f(\omega) = \omega_1^{\kappa_1} \cdots
\omega_d^{\kappa_d} \,g(\omega)$ where the $\kappa_i$ are nonnegative integers. Suppose that 
$1/\ell = \kappa_1 = \cdots = \kappa_m > \kappa_{m+1} \geq \cdots \geq
\kappa_d$ and that the boundary of $\Omega$ is transversal to the
subspace $L$ defined by $\omega_1 = \cdots = \omega_m =
0$. Let $\bar{\omega}$ and $\bar{\kappa}$ denote the vectors $(\omega_{m+1},\ldots, \omega_d)$ and $(\kappa_{m+1},\ldots,
\kappa_d)$ respectively. Then 
$$ 
V(\lambda) := \int_{\{\omega \in \Omega : |f(\omega)| \leq \lambda\}}
\varphi(\omega) d\omega \,\,\approx\,\, C \lambda^{\ell} (-\ln \lambda)^{m-1}
$$
asymptotically as $\lambda$ tends to zero where 
\begin{equation}
\label{eq:constant}
C\, =\, \frac{(2\ell)^m}{\ell(m-1)!}
\int_{\Omega \,\cap\, L} \bar{\omega}^{-\ell\bar{\kappa}} g(0,\ldots,0,\bar{\omega})^{-\ell}
\varphi(0,\ldots, 0,\bar{\omega}) d\bar{\omega}.
\end{equation}
\end{thm}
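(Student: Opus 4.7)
The plan is to reduce the theorem to a Laurent coefficient computation via Lemma~\ref{thm:volumeConstant}, then evaluate that coefficient by factoring the zeta integral over the singular subspace $L = \{\omega_1 = \cdots = \omega_m = 0\}$. More precisely, write
$$
\zeta(z) \;=\; \int_{\Omega} |f(\omega)|^{-z}\,\varphi(\omega)\,d\omega
\;=\; \int_{\Omega} |\omega_1|^{-\kappa_1 z}\cdots |\omega_d|^{-\kappa_d z}\,g(\omega)^{-z}\,\varphi(\omega)\,d\omega,
$$
and show that its smallest pole is at $z=\ell$ with multiplicity $m$ and Laurent coefficient
$$
a_{\ell,m} \;=\; (2\ell)^m \int_{\Omega\,\cap\,L} \bar{\omega}^{-\ell\bar{\kappa}}\,g(0,\ldots,0,\bar{\omega})^{-\ell}\,\varphi(0,\ldots,0,\bar{\omega})\,d\bar{\omega}.
$$
Dividing by $\ell(m-1)!$ as in Lemma~\ref{thm:volumeConstant} yields the claimed constant~$C$.

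The key analytic step is to localize. By Lemma~\ref{lemma:RLCTLocalization}, only points in the real analytic set $\{f=0\}\cap\Omega$ contribute to the smallest pole, and among these only those with $\omega_1=\cdots=\omega_m=0$ produce a pole of order~$m$ (the remaining coordinate hypersurfaces $\omega_j=0$ for $j>m$ contribute simple poles at $z=1/\kappa_j>\ell$, and $\{g=0\}$ is empty since $g>0$). Using a partition of unity I split $\zeta(z) = \zeta_{\mathrm{sing}}(z) + \zeta_{\mathrm{reg}}(z)$, where $\zeta_{\mathrm{sing}}$ is supported in a tubular neighborhood of $L$ and $\zeta_{\mathrm{reg}}$ is holomorphic at $z=\ell$ (or has a lower-order pole). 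The transversality hypothesis on $\partial\Omega$ guarantees that in this tube the slice $\{\bar\omega = \text{const}\} \cap \Omega$ is a product of intervals $[-a_i^-(\bar\omega),a_i^+(\bar\omega)]$ in each $\omega_i$, $i=1,\ldots,m$, with $a_i^\pm>0$ depending smoothly on $\bar\omega$.

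For the leading coefficient I compute, for each $i\leq m$ and fixed $\bar\omega$,
$$
\int_{-a_i^-(\bar\omega)}^{a_i^+(\bar\omega)} |\omega_i|^{-\kappa_i z}\,d\omega_i
\;=\; \frac{a_i^+(\bar\omega)^{1-\kappa_i z}+a_i^-(\bar\omega)^{1-\kappa_i z}}{1-\kappa_i z},
$$
which is meromorphic in $z$ with a simple pole at $z=1/\kappa_i=\ell$ of residue $-2/\kappa_i = -2\ell$ (the endpoint factors tend to $1$ as $z\to\ell$). Taylor expanding $g(\omega)^{-z}\varphi(\omega)$ in $(\omega_1,\ldots,\omega_m)$ around the origin of $L$, the constant term produces exactly the product of $m$ such poles, giving
$$
\zeta_{\mathrm{sing}}(z) \;=\; \frac{(2\ell)^m}{(\ell-z)^m} \int_{\Omega\,\cap\,L} \bar{\omega}^{-\bar{\kappa} z}\,g(0,\ldots,0,\bar\omega)^{-z}\,\varphi(0,\ldots,0,\bar\omega)\,d\bar\omega \;+\; \text{(lower order in } \ell-z\text{)},
$$
evaluated at $z=\ell$. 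The remaining integral over $\bar\omega$ converges absolutely at $z=\ell$ because $\kappa_j\ell<1$ for $j>m$, so the factor $|\omega_j|^{-\kappa_j\ell}$ is integrable near $0$.

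The main obstacle is justifying that the Taylor tails in $g^{-z}\varphi$ and the $\bar\omega$-dependent endpoints $a_i^\pm$ contribute only to lower order at $z=\ell$. Each extra factor of $\omega_i$ in the Taylor expansion raises the exponent in $\int |\omega_i|^{-\kappa_i z+k}d\omega_i$, shifting the pole from $z=\ell$ to $z=(k+1)\ell$ and hence reducing the multiplicity at $z=\ell$ by one; iterating and summing gives that the error is $O((\ell-z)^{-(m-1)})$. A similar argument handles the dependence of $a_i^\pm$ on $\bar\omega$: expanding $a_i^\pm(\bar\omega)^{1-\kappa_i z}$ at $z=\ell$ replaces the constant $1$ by an analytic function in $z$, which does not change the residue. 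Combining $\zeta_{\mathrm{sing}}$ with the holomorphic $\zeta_{\mathrm{reg}}$ and applying Lemma~\ref{thm:volumeConstant} completes the proof.
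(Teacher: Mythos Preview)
Your proposal is correct and follows essentially the same route as the paper: reduce to the Laurent coefficient $a_{\ell,m}$ of $\zeta(z)$ via Lemma~\ref{thm:volumeConstant}, Taylor expand $g^{-z}\varphi$ about $\omega_1=\cdots=\omega_m=0$, integrate out the singular variables to extract the order-$m$ pole, and check that the remaining $\bar\omega$-integral converges at $z=\ell$. The paper organizes the computation slightly differently---it first treats the single orthant $[0,\varepsilon]^d$ (obtaining $a_{\ell,m}=\ell^m\int\cdots$) and then sums over $2^m$ orthants to produce the factor $(2\ell)^m$, whereas you integrate over two-sided intervals $[-a_i^-,a_i^+]$ at once---but the substance is identical.

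One small imprecision: transversality of $\partial\Omega$ with $L$ does not literally make each slice $\{\bar\omega=\text{const}\}\cap\Omega$ a product of intervals; it only guarantees that, for $\bar\omega$ in the interior of $\Omega\cap L$, the slice contains a full $m$-dimensional neighborhood of the origin. This is all you actually use (the endpoint values $a_i^\pm$ drop out of the leading Laurent coefficient anyway), so the argument goes through, but the phrasing should be tightened. The paper handles this by decomposing $\Omega$ into pieces isomorphic to orthants rather than asserting a global product structure.
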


\begin{proof} Let us suppose for now that $\Omega$ is the
  hypercube~$[0,\varepsilon]^d$. Our goal is to apply
  Lemma~\ref{thm:volumeConstant} by computing the Laurent
  coefficient $a_{\ell, m}$ of the zeta function $\zeta(z)$. We first
  study the Taylor series expansion of the integrand about
  $\omega_1=\cdots = \omega_m = 0$. This gives
$$
\left(\omega^{\kappa} 
  g(\omega)\right)^{-z}\varphi(\omega) = \omega^{-z\kappa}
\left(g(0,\ldots,0,\bar{\omega})^{-z} \varphi(0, \ldots, 0, \bar{\omega})+ O(\omega_1) + \cdots + O(\omega_m)\right).
$$
The higher order terms in this expansion contribute larger
poles to $\zeta(z)$, so we only need to compute the coefficient
of $(\ell-z)^{-m}$ in the Laurent expansion of
\begin{align*}
&\int_\Omega \omega_1^{-z\kappa_1}\cdots \omega_m^{-z\kappa_m} \, \bar{\omega}^{-z\bar{\kappa}}\,
g(0,\ldots,0,\bar{\omega})^{-z} \,\varphi(0, \ldots, 0, \bar{\omega})
\,\,d\omega_1 \cdots d\omega_m \,d\bar{\omega}\\
&= \left(\frac{\varepsilon^{1-z/\ell}}{1-z/\ell}\right)^m  \int_{\Omega \,\cap\, L} \bar{\omega}^{-z\bar{\kappa}}\,
g(0,\ldots,0,\bar{\omega})^{-z} \,\varphi(0, \ldots, 0, \bar{\omega})
\,\,d\bar{\omega}.
\end{align*}
Because $g$ is positive, the last integral in the above expression
does not have any poles near $z = \ell$, so the constant term in its
Laurent expansion comes from substituting $z = \ell$. Hence, 
$$
a_{\ell,m} = \ell^m \int_{\Omega \,\cap\, L} \bar{\omega}^{-\ell\bar{\kappa}}\,
g(0,\ldots,0,\bar{\omega})^{-\ell} \,\varphi(0, \ldots, 0, \bar{\omega})
\,\,d\bar{\omega}.
$$
Now suppose $\Omega$ is not the hypercube. Since the boundary of $\Omega$ is transversal to the
subspace~$L$, we decompose $\Omega$ into small neighborhoods which are
isomorphic to orthants. Summing up~the contributions from these orthants gives the desired result.
\end{proof}

\begin{rem}
We revisit the planar tubes shown in Figures
\ref{fig:tubes_1}--\ref{fig:tubes_3}. Using 
the formula~(\ref{eq:constant}) in
Theorem  \ref{thm_constants}, one can easily check the constants $C$ we saw in Example \ref{ex_tubes},
namely
$$ 
C= \begin{cases}
\,1&\textrm{for } f(x,y)=x,\\
\,1& \textrm{for } f(x,y)=xy,\\
\,3& \textrm{for } f(x,y)=x^2y^3.
\end{cases}
$$
\end{rem}

\begin{ex} \label{ex:chainStarConstants}
We apply Theorem \ref{thm_constants} to find the constants in
Corollary \ref{cor__const_trees} for chains
and stars. In both cases we set  $\Omega = [-1,1]^{p-1}$ and
$\varphi=2^{1-p}$. For chains we have $(\ell,m)=(1,p-1)$ and $L$ is the subspace $a_{12} = \cdots =
a_{p-1,p} = 0$. Then the integral in (\ref{eq:constant}) is the
evaluation of the denominator of (\ref{eq:chaincor}) at the origin
multiplied by $\varphi$, so $C_{\rm chain} \, = \, 1/(p-2)!$ as
claimed. 

For stars, $(\ell,m)=(1,2)$ is achieved by $1<i<j$ and $S \subset
\bar{S} :=
\{2, \ldots, p\} \setminus \{i,j\}$ with 
$$
\corr(i,j|S) = -\frac{a_{1i}a_{1j}}{\sqrt{1+SOS(S)
    +a_{1i}^2}\,\sqrt{1+SOS(S)+a_{1j}^2}}, \quad SOS(S) = \sum_{s \in S} a_{1s}^2.
$$
Since
$|\corr(i,j|S)| \geq |\corr(i,j|\bar{S})|$, the quantity
$V_G(\lambda)$ is the volume of the union of the tubes $\{|\corr(i,j|\bar{S})|
\leq \lambda\}$ over all $1{<}i{<}j$. By applying formula
(\ref{eq:constant}), the asymptotic
volume of each
tube computes to $p\lambda (-\ln \lambda)/3$. Meanwhile, the volumes of the
intersections of these tubes become negligible as $\lambda
\rightarrow 0$. After summing over all $1{<}i{<}j$,
 we get $C_{star} = \binom{p-1}{2} \frac{p}{3} = \binom{p}{3}$.
\qed
\end{ex}

\begin{ex} \label{ex:bowtieConstant} 
We compute the constant $C$ of the volume
  $V_{1,2|4,\ldots,p}(\lambda)$ for $\,G = {\rm
    Tripart}_{p,p-3} \,$ as in Example \ref{ex:tripart_1}. Let $p \geq
  6$ and   $\Omega$ be given by
  (\ref{eq:newParamSpace}). We are interested in the tube
$$
\left|a_{13}a_{23}\,\frac{g(\bar{a})}{h(a)}\right| \leq \lambda, \quad \text{where }g(\bar{a}) =\sum_{k=4}^p
  a_{3k}^2  \,\,
  \hbox{ and } \,\,
  h(a) = \sqrt{1+g(\bar{a}) (a_{13}^2{+}1)} \cdot \sqrt{1+g(\bar{a})
    (a_{23}^2{+}1)}.
$$
The measure on $\Omega$ is $\varphi(a)\,da_{12}\,da_{23}\,d\bar{a}$
where $\varphi(a) = 1/4$ and $d\bar{a}$ is the Lebesgue probability measure on the ball $\{g(\bar{a})
\leq 1\}$. According to Theorem
\ref{thm_constants}, 
$$ 
C = 
  \int_{\{g(\bar{a})\leq 1\}} \!\!\left(\frac{g(\bar{a})}{1+g(\bar{a})}
  \right)^{-1}\!\! d\bar{a}
$$
By substituting spherical coordinates for the integration,
this expression simplifies to 
$$
1+ \int_{\{g(\bar{a})\leq 1\}} \frac{1}{g(\bar{a})}\, d\bar{a}\,\,
=\,\, 2 + \frac{2}{p-5}\, ,
$$
yielding the constant $C_{1,2|4,\ldots,p} $ needed for the bias reduction analysis in
Example \ref{ex:tripart_1}.
\qed
\end{ex}

\section{Discussion}

In this paper we examined the volume of regions in the parameter space
of a directed Gaussian graphical model that are given by bounding partial
correlations. We established a connection to singular learning theory,
and we showed that these volumes can be computed by evaluating the real
log canonical threshold of the partial correlation
hypersurfaces. Throughout the paper we have made the simplifying
assumption of equal noise,
i.e.~$\epsilon\sim\mathcal{N}(0,I)$. Ideally, one would like to allow
for different noise variances. This would increase the dimension of
the parameter space $\Omega$. It would be very interesting to study
this more difficult situation and understand how the asymptotic
volumes change, or more generally, how
the asymptotics depends on our choice of the parameter space $\Omega$.
This issue
was discussed briefly in Example \ref{ex:specialcases}.

This paper can be seen as a first step towards developing a theory which would allow to compute the complete asymptotic expansion of particular volumes. We have concentrated on computing only the leading coefficients of these expansions, and even this question is still open in many cases (e.g.~Example 6.6). An interesting extension would be to better understand how to use properties of the graph to compute the coefficients $C_{l,m}$ in the asymptotic expansion. Finally, another interesting problem for future research would be to ascertain all values of $(l,m)$ for which $C_{l,m}$ is non-zero, in terms of intrinsic properties of the underlying graph.

\section*{Acknowledgments}
This project began at the workshop ``Algebraic Statistics in the
Alleghenies'', which took place at Penn State
in June 2012. We are grateful to the organizers for a very inspiring meeting. We thank Thomas Richardson for pointing out the
connection to the problem of bias reduction in causal inference. We
also thank all reviewers for thoughtful comments and one of the reviewers in particular for numerous detailed comments and
 for spotting a mistake in a proof.~This
work was supported in part by the US National Science Foundation  (DMS-0968882)
and the DARPA Deep Learning program (FA8650-10-C-7020).

\end{document}